\newtheorem{theorem}{Theorem}[section]
\newtheorem{lemma}[theorem]{Lemma}
\newtheorem{proposition}[theorem]{Proposition}
\newtheorem{corollary}[theorem]{Corollary}
\newtheorem{defi}[theorem]{Definition}
\theoremstyle{remark}
\newtheorem{remark}{Remark}
\DeclareMathOperator{\im}{Im}
\DeclareMathOperator{\identity}{Id}
\DeclareMathOperator{\rank}{rank}
\DeclareMathOperator{\prob}{Prob}
\DeclareMathOperator{\dx}{dx}
\DeclareMathOperator{\dy}{dy}
\DeclareMathOperator{\curl}{curl}
\DeclareMathOperator{\dnux}{d\nu_x}
\DeclareMathOperator{\dnu}{d\nu}
\DeclareMathOperator{\span2}{span}
\def\ccrn{\mathbb{R}^n}
\def\ccr{\mathbb{R}}
\def\ccrd{\mathbb{R}^n}
\def\ccrN{\mathbb{R}^N}
\def\grad{\nabla}
\def\calA{\mathcal{A}}
\def\calM{\mathcal{M}}
\def\bbA{\mathbb{A}}
\def\bbP{\mathbb{P}}
\def\bbQ{\mathbb{Q}}
\def\linear{\mathcal{L}}
\def\torusN{\mathrm{T}_N}
\def\weakConv{\rightharpoonup}
\def\id{\identity}
\def\overlineF{\overline{F}}
\def\grad{\nabla}
\def\hpo{\mathbb{H}^p_0}
\def\ccrNn{\mathbb{R}^{d}}
\def\ccrnn{\mathbb{R}^{d}}
\def\intccrnn{\int_{\ccrnn}}
\def\weaksconv{\overset{\ast}{\rightharpoonup}}
\def\translation{\delta}
\def\nuxi{\nu_{\xi}}
\def\calF{\mathcal{F}}
\def\torusn{\mathrm{T}^{N}}
\def\inttorusn{\int_{\torusn}}
\def\indyk{\mathbbm{1}}
\def\naturals{\mathbb{N}}
\def\qa{Q_{\calA}}
\def\weakstarconv{\overset{\ast}{\rightharpoonup}}
\def\intccrd{\int_{\ccrd}}
\def\wntilde{\widetilde{W}_M}
\def\gm{\mathcal{G}_M}
\def\amp{A^{+}}
\def\ccrmn{\mathbb{R}^{m \times n}}
\def\ccrmr{\mathbb{R}^{m \times r}}
\def\ccrrn{\mathbb{R}^{r \times n}}
\def\ccrm{\mathbb{R}^{m}}
\def\weakStarConv{\overset{*}{\rightharpoonup}}
\def\intOmega{\int_{\Omega}}
\def\W1p{\mathrm{W}^{1,p}}
\def\Wm1p{\mathrm{W}^{-1,p}}
\def\LL{\mathrm{L}}
\def\WW{\mathrm{W}}
\def\II{\mathrm{I}}
\def\og{\leavevmode\raise.3ex\hbox{$\scriptscriptstyle\langle\!\langle$~}}
\def\fg{\leavevmode\raise.3ex\hbox{~$\!\scriptscriptstyle\,\rangle\!\rangle$}}
\title{Closed $\calA$-$p$ Quasiconvexity and Variational Problems with Extended Real-Valued Integrands}
\author{Adam Prosinski \thanks{University of Oxford, EPSRC CDT in Partial Differential Equations {\tt adam.prosinski@maths.ox.ac.uk} }}
\begin{document}
\maketitle

\abstract{This paper relates the lower semi-continuity of an integral functional in the compensated compactness setting of vector fields satisfying a constant-rank first-order differential constraint, to closed $\calA$-$p$ quasiconvexity of the integrand. The lower semi-continuous envelope of relaxation is identified for continuous, but potentially extended real-valued integrands. We discuss the continuity assumption and show that when it is dropped our notion of quasiconvexity is still equivalent to lower semi-continuity of the integrand under an additional assumption on the characteristic cone of $\calA$.}

\section{Introduction}
A classical problem in the Calculus of Variations is determining sequential lower semi-continuity criteria for functionals of the form $V \mapsto \intOmega F(V(x)) \dx$ and investigating the different notions of convexity that arise. The full statement of such a problem involves specifying the assumptions, that are essentially of two kinds. One needs information on the integrand $F$ - its regularity and growth rate. Secondly, the class of admissible vector fields $V$ needs to be defined, together with the relevant notion of convergence $V_j \rightarrow V$ under which the sequential lower semi-continuity is to be investigated. 
A typical setting is when one imposes $p$-growth conditions (from above and/or below) on a lower semi-continuous integrand $F$ and considers weak $\LL^p$ convergence of vector fields $V_j$. To obtain less constrictive conditions on the integrand one often restricts the type of test vector fields considered and passes to the so-called compensated compactness setting.

Extensive research has been done in the setting of gradients of Sobolev functions, that is when $V = \grad u$ for some $u \in \W1p$ and the sequential lower semi-continuity is tested on $\grad u_j \weakConv \grad u$ weakly in $\LL^p$. Then the key condition is quasiconvexity and its variants -  there is an abundance of results in this framework available in the literature. We do not attempt to give a comprehensive list of such, and instead we refer the reader to \cite{Dacorogna07} and the bibliography therein for a good overview of the theory. However we feel obliged to give at least an example of one of the classical results, in this generality, due to Acerbi and Fusco. In \cite{AcerbiFusco84} they have shown that if $f \colon \ccrn \times \ccrm \times \ccr^{n \times m} \to \ccr$ is a Carath\'{e}odory integrand satisfying, for some $p \geqslant 1$,
$$ 0 \leqslant f(x,s,\xi) \leqslant a(x) + C(|s|^p + |\xi|^p) \text{ for every } x \in \ccrn, s \in \ccrm, \xi \in \ccr^{n \times m},$$
with some non-negative constant $C$ and a non-negative, locally integrable function $a$, then for any open set $\Omega \subset \ccrn$ the functional $u \mapsto \intOmega f(x, u, \grad u) \dx$ is sequentially weakly (weakly* if $p = \infty$) lower semi-continuous on $\W1p(\Omega; \ccrm)$ if and only if $f$ is quasiconvex in $\xi$. 
It is worth pointing out that this result has been improved upon shortly after by Marcellini in \cite{Marcellini85}. There the author allows for slightly more general growth conditions and presents an alternative approach - for details we refer the reader to the original paper. In the subsequent years there has been a number of other improvements, and one that is particularly relevant for the present work is \cite{Kristensen15}, where the upper growth bounds are dropped and the integrand is allowed to take the value $+ \infty$. Finally, let us remark that quasiconvexity is normally considered in the multi-dimensional case $n, m > 1$, i.e. when the gradient is a matrix. If either the source's or the target's dimension is equal to $1$, quasiconvexity reduces to standard convexity (see for example Theorem 1.7 in \cite{Dacorogna07}), thus making this a very special case. 

To show the similarity between the gradient case and the one studied here observe that requiring that all the vector fields $V$, $V_j$ be gradients is equivalent to requiring that they be zeros of the differential operator $\curl$ (for a matrix this means that every row is $\curl$-free). Here instead of $\curl$ we consider a general first-order constant-rank differential operator $\calA$ (see the next section for precise definitions) and we require that $\calA V_j \rightarrow \calA V$ strongly in $\Wm1p(\Omega)$, which is a natural relaxation of $\calA V_j = \calA V = 0$.
The setting with a general differential operator $\calA$ has been previously studied in the literature,
foundations for it were developed by a number of authors, including the works of Dacorogna (see \cite{Dacorogna82}), Murat (see \cite{Murat78}) and Tartar (see \cite{Tartar79}) to name a few. A paper that is particularly relevant to the present work is \cite{FonsecaMuller99} by Fonseca and M\"{u}ller. In fact many of the preliminary results on the structure of $\calA$-free vector fields that we use here come from that paper. The main point of the present work is to  remove the upper growth bounds on the integrand considered, in particular, to allow $F$ to take the value $+ \infty$.

Besides \cite{FonsecaMuller99} there are two other papers that must be quoted here. First is \cite{FonsecaLeoniMuller04} which relaxes the growth conditions to non-standard $(p,q)$ ones with $p < q$ in the spirit of \cite{FonsecaMaly97} ($p > 1$) and \cite{Kristensen98} ($p = 1$) in the gradient case. It is interesting to note at this point that due to the presence of a gap between the upper growth bound put on the integrand ($q$-growth) and the class of admissible test functions ($\LL^p$) there is a certain choice to be made when defining the functional $V \mapsto \intOmega f(V(x)) \dx$. One may simply consider the pointwise composition $f(V(x))$ and then deal with the fact that the integral in question need not be finite for a general $V \in \LL^p$. Second possible route is to adopt a Lebesgue-Serrin type definition, that is approximate (in the $\LL^p$ sense) the vector field $V$ by smooth vector fields $V_n$ of appropriate growth (that is in $\LL^q$) and consider $\inf \left\{ \liminf \intOmega f(V_n(x)) \dx \right\}$. Naturally, the limits may still turn out to be infinite, but now each integral $\intOmega f(V_n(x)) \dx$ is well defined and finite. This is the approach used in \cite{FonsecaLeoniMuller04} and we refer as well to \cite{Marcellini86}, where its validity is discussed in the gradient case. 

Another important improvement of \cite{FonsecaMuller99} particularly relevant to the present work was made in \cite{BraidesFonsecaLeoni00}. While \cite{FonsecaMuller99} forms the foundation of the study and identifies $\calA$-quasiconvexity as an equivalent condition for sequential lower semi-continuity of integral functionals given by a continuous integrand $f$ satisfying upper $p$-growth bounds, \cite{BraidesFonsecaLeoni00} studies a relaxation of this problem in the appropriate sense. It is shown that under the same continuity and growth assumptions on the integrand as in \cite{FonsecaMuller99} it is possible to identify the sequential lower semi-continuous envelope (where the notion of convergence is $V_j \weakConv V$ in $\LL^p$ and $\calA V_j \to \calA V$ in $\Wm1p$) of the functional $V \mapsto \intOmega f(V(x)) \dx$. The authors prove that this relaxed problem has an integral representation, with the integrand being the $\calA$-quasiconvex envelope of $f$. The main result of the present work is in the same spirit, but we wish to drop the upper growth-bound and allow the integrand to take the value $+ \infty$. 
Let us note that the results we cite are in fact more general than what we discuss here in the sense that the integrand $f$ is allowed to depend on more variables, but in the brief outline above we decided to opt for simplicity, to emphasize the main features of the respective contributions. 

Passing from the gradient case to a general operator $\calA$ is of interest because of the scope of applications. A number of examples of operators satisfying the constant rank condition may be found in Section 3 of \cite{FonsecaMuller99}. Those include the $\curl$ operator which, as mentioned before, corresponds to the case of vector fields which are gradients. This has been extensively studied on its own and large part of research on constant rank operators aims to reproduce, in this more general setting, the results already available for gradients.
It is interesting to note that one may also study the case of symmetrised gradients (of interest in the theory of elasticity) or gradients (derivatives) of order higher than one in the $\calA$-free framework, as pointed out in Example 3.10 of the aforementioned paper. 
Finally, the requirement of being divergence-free may also be phrased in the language of constant rank operators. Moreover, a mixture of $\text{div}$-free and $\text{curl}$-free conditions relating the magnetisation and the induced magnetic field may be expressed through a suitable constant rank differential operator. Thus, the theory of $\calA$-quasiconvexity may also be related to micromagnetics, as pointed out in \cite{FonsecaMuller99} and \cite{FonsecaKruzik10} (see also the references therein, for example \cite{DeSimone93}).

To finish the discussion of different operators $\calA$ considered in the literature we remark that, while all previous examples correspond to an operator with constant coefficients (i.e. independent of $ x \in \Omega$), there has also been some work on the case of $\calA(x)$ varying with $x$. An example of such a result is given in \cite{Santos04}, where the author generalises the lower semi-continuity results of \cite{FonsecaMuller99} to the case $\calA(x)$. Note that the constant rank hypothesis is still in place, and the rank must not depend on $x$.

Let us also mention that while this work focuses on studying oscillation phenomena in $\calA$-free sequences of functions, it is also possible to include concentration effects. In this case one switches from the classical Young measures we use here to the so-called generalised Young measures (see \cite{DiPernaMajda87}). Some recent results in this matter may be found, for example, in \cite{FonsecaKruzik10} or \cite{ArroyoPhilippisRindler17}.

\subsection{Announcements of results}
Throughout the paper the standard assumption is that the first order differential operator $\calA$ satisfies the constant-rank condition. As a mixture of notions present in \cite{FonsecaMuller99} and \cite{Kristensen15} (see also \cite{Pedregal97}) we say that a function is \textit{closed} $\calA$-$p$ quasiconvex if it satisfies Jensen's inequality with respect to all homogeneous Young measures generated by $\LL^p$-weakly convergent $\calA$-free vector fields. 

Our first result, Theorem \ref{thmQCimpliesLSC}, shows that this notion of quasiconvexity is, for a non-negative integrand, sufficient for lower semi-continuity of the functional $\II_F[V] := \intOmega F(V(x)) \dx$ in the sense outlined before.   

Under additional lower-growth bound of the type $F(\xi) \geqslant |\xi|^p$ and a continuity assumption we obtain, in Theorem \ref{thmContinuousImpliesRelaxation}, a full characterisation of the lower semi-continuous envelope of the functional $\II_F$. In this case we show that the relaxed problem is given by integration of the closed $\calA$-$p$ quasiconvex envelope of $F$. 

Finally we remove the continuity assumption and replace it with the requirement that $F$ be real valued in Theorem \ref{thmLSCimpliesQC}. There we show that if the characteristic cone of the operator $\calA$ spans the entire space then closed $\calA$-$p$ quasiconvexity is still equivalent to lower semi-continuity of the functional $\II_F$. 

The paper is organised as follows. In Section 2 we introduce the principal notions and fundamental results. Notably we define the class of $\calA$-$p$ Young measures and give regularisation results for sequences generating such measures. The last part of this Section characterises the measures in question in terms of $\calA$-$p$ quasiconvex functions. The contents of Section 2 are mostly technical results borrowed from \cite{FonsecaMuller99}, to which we refer for proofs. 

The main part of the paper is contained in Section 3, where we introduce the notion of closed $\calA$-$p$ quasiconvexity. The first thing we prove is its sufficiency for lower semi-continuity of the functional. The main ingredient for the proof of necessity results is the representation formula for the closed $\calA$-$p$ quasiconvex envelope given in Proposition \ref{lemmaQCenvelope}. The proof of it is quite complicated, relies on the Kuratowski Ryll-Nardzewski Measurable Selection Theorem (see Theorem \ref{thmKuratowski}), and encompasses many of the difficulties encountered throughout the proofs of the other main results. In this context this particular strategy is, to the author's knowledge, different from that used in other proofs of similar results, although it is worth pointing out that Sychev (see \cite{Sychev99}) also uses a measurable selection argument to obtain a representation of a quasiconvex envelope. Once this representation formula is established we largely follow the spirit of Kristensen's (see \cite{Kristensen15}) proof for corresponding results in the case of gradients.

The Appendix contains a proof of an auxiliary result necessary for dealing with sequences of $\calA$-free vector fields and Young measures generated by them. This has been used in previous papers on $\calA$-quasiconvexity, but the approach we show here is, to the author's knowledge, new and more elementary than previous arguments. 

\subsection{Acknowledgements}
I would like to thank my advisor Prof. Jan Kristensen for his valuable advice and guidance throughout this project. I also thank the Oxford EPSRC CDT in Partial Differential Equations, the Clarendon Fund, and St John's College Oxford, whose generous support is gratefully acknowledged.

\section{Notation and preliminary results}
We begin by introducing the language of Young measures and the Fundamental Theorem of Young Measures (Theorem \ref{thmFToYM}) in particular. These results are classical and a typical reference is \cite{Pedregal97}, where the relevant proofs may be found. 
Here and in all that follows $\Omega \subset \ccrN$ is an open and bounded domain with $|\partial \Omega| = 0$, where $\partial \Omega$ is the boundary and $|\cdot|$ denotes the $N$-dimensional Lebesgue measure. We write $\calM (\ccrd)$ for the space of finite Radon measures on $\ccrd$. 

\begin{theorem}\label{thmFToYM}
Let $E \subset \ccrN$ be a measurable set of finite measure and $z_j \colon E \rightarrow \ccrn$ be a bounded sequence of $\LL^p$ functions for some $p \in [1, \infty]$. Then there exists a subsequence $z_{j_k}$ and a weak$^*$-measurable map $\nu \colon E \rightarrow \calM (\ccrn)$ such that the following hold:

i) every $\nu_x$ is a probability measure,

ii) if $f \colon \Omega \times \ccrn \rightarrow \ccr \cup \{\infty\}$ is a normal integrand bounded from below, then
$$ \liminf_{j \rightarrow \infty} \int_{\Omega} f(x, z_{j_k}(x)) \dx  \geqslant \int_{\Omega} \overline{f}(x)\dx,$$
where
$$ \overline{f}(x) := \langle \nu_x, f(x,\cdot) \rangle = \int_{\ccrn}f(x,y) \dnux(y);$$

iii) if $f \colon \Omega \times \ccrn \rightarrow \ccr \cup \{\infty\}$ is Carath\'{e}odory and bounded from below, then
$$ \lim_{j \rightarrow \infty} \int_{\Omega} f(x, z_{j_k}(x)) \dx  = \int_{\Omega} \overline{f}(x)\dx < \infty$$
if and only if $\{f(\cdot, z_{j_k}(\cdot))\}$ is equiintegrable (in the usual, $\LL^1$ sense). In this case
$$ f(\cdot, z_{j_k}(\cdot)) \weakConv \overline{f} \text{ in } \LL^1(\Omega) \cdot$$
The family $\{ \nu_x \}_{x \in E}$ is called the Young measure generated by $z_{j_k}$. If there exists some $x_0 \in E$ such that $\nu_x = \nu_{x_0}$ for almost every $x \in E$ then we say that $\nu$ is a homogeneous Young measure and often identify the family $\{\nu_x\}$ with the single measure $\nu_{x_0}$ if there is no risk of confusion.
\end{theorem}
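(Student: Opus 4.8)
The plan is to realise the composition operators $g \mapsto \int_E g(x,z_j(x))\dx$ as a bounded sequence in the dual of a separable Banach space, extract a weak$^\ast$ limit to define the family $\{\nu_x\}$, and then bootstrap the resulting convergence from a small class of test functions up to all normal integrands. Concretely, I would work on the separable space $\LL^1(E;\calC_0(\ccrn))$ of Bochner-integrable maps $g\colon E \to \calC_0(\ccrn)$ and set $\langle T_j, g\rangle := \int_E g(x,z_j(x))\dx$. Since $|\langle T_j,g\rangle| \le \|g\|_{\LL^1(E;\calC_0)}$, the $T_j$ sit in the unit ball of the dual, which by the Riesz representation theorem ($\calC_0(\ccrn)^\ast = \calM(\ccrn)$) and the duality for Bochner spaces equals $\LL^\infty_{w^\ast}(E;\calM(\ccrn))$. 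Banach--Alaoglu together with separability produces a subsequence $z_{j_k}$ and a limit $T$ represented by a weak$^\ast$-measurable family $\{\nu_x\}$, so that $\int_E \phi(z_{j_k})\psi\dx \to \int_E \langle\nu_x,\phi\rangle \psi\dx$ for all $\phi\in\calC_0(\ccrn)$ and $\psi\in\LL^\infty(E)$; testing against non-negative $g$ shows each $\nu_x$ is a non-negative Radon measure of mass at most one.

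Next I would exploit the $\LL^p$ bound with $p\ge 1$ to obtain tightness. As $|E|<\infty$ the sequence is bounded in $\LL^1(E)$, so Chebyshev's inequality gives $\sup_k |\{|z_{j_k}|>R\}| \le C/R \to 0$ as $R\to\infty$, i.e.\ no mass escapes to infinity. Choosing cut-offs $\phi_R\in\calC_0(\ccrn)$ with $\indicator_{B_R}\le\phi_R\le 1$ and passing to the limit forces $\langle\nu_x,1\rangle = 1$ for almost every $x$, which is (i). The same tightness estimate lets me upgrade the convergence of the previous paragraph from $\phi\in\calC_0(\ccrn)$ to $\phi\in\calC_b(\ccrn)$, by splitting off the tail outside $B_R$ and bounding it uniformly in $k$.

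For the liminf inequality (ii) I would subtract the lower bound to assume $f\ge 0$ and then approximate. A bounded Carath\'{e}odory integrand is, by the Scorza--Dragoni theorem, continuous in $(x,y)$ on compact sets exhausting $E$, and on such sets a Stone--Weierstrass argument reduces $\int_E f(x,z_{j_k})\dx \to \int_E \overline{f}\dx$ to the product convergence already established; this proves the equality in (ii) for bounded Carath\'{e}odory $f$. A general normal integrand bounded below I would reach in two further steps: truncate $f_m:=\min\{f,m\}$ and regularise each $f_m$ by a Lipschitz inf-convolution in $y$ to obtain bounded Carath\'{e}odory integrands increasing to $f$. Applying the equality to these, using the monotonicity $\int_E f(x,z_{j_k})\dx \ge \int_E f_m(x,z_{j_k})\dx$, and letting first $k\to\infty$ and then $m\to\infty$ with the monotone convergence theorem on the right-hand side yields the inequality (ii).

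Finally, (iii) I would derive from the Dunford--Pettis theorem: equiintegrability of $\{f(\cdot,z_{j_k}(\cdot))\}$ is equivalent to relative weak compactness in $\LL^1(E)$, so along a further subsequence it converges weakly; the truncation argument above, with equiintegrability used to make the truncation tail uniformly small, identifies the weak limit as $\overline{f}$ and gives $\lim_k\int_E f(x,z_{j_k})\dx = \int_E\overline{f}\dx<\infty$ along the whole sequence, whence $f(\cdot,z_{j_k}(\cdot)) \weakConv \overline{f}$ in $\LL^1(E)$. For the converse, equality in the liminf inequality (ii) on every measurable subset rules out any concentration of mass, which by Vitali's theorem is exactly equiintegrability. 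I expect the genuine obstacle to be the middle bootstrapping step: identifying the dual of $\LL^1(E;\calC_0(\ccrn))$ as a measurable family of measures, and then passing from bounded continuous test functions to arbitrary normal integrands while simultaneously controlling the $x$-measurability (Scorza--Dragoni), the merely lower-semicontinuous and unbounded $y$-dependence (truncation and Fatou), and the possible loss of mass at infinity (tightness).
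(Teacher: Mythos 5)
The paper offers no proof of Theorem \ref{thmFToYM} at all: it is quoted as a classical result, with the proof deferred to \cite{Pedregal97}. Your proposal is correct and is essentially the standard argument given in that reference (going back to Ball's treatment): realising $x \mapsto \delta_{z_j(x)}$ as elements of $\LL^{\infty}_{w^{\ast}}(E;\calM(\ccrn)) = \left(\LL^1(E;\calC_0(\ccrn))\right)^{\ast}$, extracting a weak* limit by Banach--Alaoglu and separability, using the $\LL^p$ bound for tightness so that the limit measures have full mass, bootstrapping from $\calC_0$ test functions to normal integrands via Scorza--Dragoni, truncation and Lipschitz inf-convolution, and settling part (iii) by Dunford--Pettis together with a Vitali--Hahn--Saks type argument for the converse direction.
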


Recall that we say that a function $f \colon \Omega \times \ccrd \rightarrow (-\infty, \infty]$ is a normal integrand if $f$ is Borel measurable and for every $x \in \Omega$ the function $z \mapsto f(x, z)$ is lower semi-continuous. Similarly we say that a function $f \colon \Omega \times \ccrd \rightarrow \ccr$ is Carath\'{e}odory if both $f$ and $-f$ are normal integrands. Finally, a map $\nu \colon E \rightarrow \calM (\ccrd)$ is said to be weak*-measurable if $x \mapsto \langle \nu_x, \varphi \rangle$ is measurable for any continuous and compactly supported function $\varphi \colon \ccrd \to \ccr$.

\begin{proposition}\label{propTranslatedYM}
If $\{v_j\}$ generates a Young measure $\nu$ and if $w_j \rightarrow w$ in measure, then $\{v_j + w_j\}$ generates the translated Young measure
$$\widetilde{\nu}_x := \delta_{w(x)} \ast \nu_x,$$
where
$$\langle \delta_{a} \ast \mu, \varphi \rangle = \langle \mu, \varphi(\cdot + a) \rangle$$
for $a \in \ccrd$ and $\varphi \in C_0(\ccrd)$. In particular, if $w_j \rightarrow 0$ in measure, then $\{v_j + w_j\}$ still generates $\nu$. Similarly, if $\| v_j - w_j \|_p \rightarrow 0$ for some $p \in [1,\infty]$ then both $v_j$ and $w_j$ generate the same Young measure. 
\end{proposition}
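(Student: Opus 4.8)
The plan is to verify the defining convergence for Young-measure generation directly: I will show that for every $\varphi \in C_0(\ccrd)$ the sequence $\varphi(v_j + w_j)$ converges weakly in $\LL^1(E)$ to $x \mapsto \langle \nu_x, \varphi(\cdot + w(x))\rangle = \langle \widetilde{\nu}_x, \varphi\rangle$. Since the Young measure generated by a (tight) sequence is uniquely determined by the weak-$\LL^1$ limits of $\varphi(u_j)$ for $\varphi \in C_0(\ccrd)$ (a classical fact, see \cite{Pedregal97}), this identifies the generated measure as $\widetilde{\nu}_x = \delta_{w(x)} \ast \nu_x$. Because $\varphi$ is bounded, $\varphi(v_j + w_j)$ is automatically bounded in $\LL^\infty(E)$, so no $\LL^p$-bound on $w_j$ is needed to make sense of this convergence; as a preliminary I would record that $w$ is finite a.e. (being a limit in measure of a.e.-finite functions) and that $\{v_j + w_j\}$ is tight, $\{v_j\}$ being $\LL^p$-bounded and $\{w_j\}$ being tight as a sequence converging in measure to an a.e.-finite limit. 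Tightness guarantees that the limiting $\widetilde{\nu}_x$ are genuine probability measures, with no mass escaping to infinity.

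Fix $\varphi \in C_0(\ccrd)$ and set $M := \|\varphi\|_\infty$. The core step is the splitting
$$\varphi(v_j + w_j) = \big[\varphi(v_j + w_j) - \varphi(v_j + w)\big] + \varphi(v_j + w).$$
For the bracketed term I use that $\varphi$ is uniformly continuous: given $\epsilon > 0$ choose $\delta > 0$ with $|a - b| < \delta \Rightarrow |\varphi(a) - \varphi(b)| < \epsilon$. Since $|(v_j + w_j) - (v_j + w)| = |w_j - w|$, splitting $E$ into $\{|w_j - w| < \delta\}$ and its complement yields
$$\big\| \varphi(v_j + w_j) - \varphi(v_j + w) \big\|_{\LL^1(E)} \le \epsilon\, |E| + 2M\, \big|\{|w_j - w| \ge \delta\}\big|.$$
As $w_j \to w$ in measure the last term tends to $0$, and letting $\epsilon \to 0$ shows the bracketed term converges to $0$ strongly in $\LL^1(E)$.

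It remains to identify the weak limit of $\varphi(v_j + w)$, for which I apply Theorem \ref{thmFToYM} to the integrand $\Phi(x,y) := \varphi(y + w(x))$. This $\Phi$ is Carath\'{e}odory --- continuous in $y$, and measurable in $x$ since $w$ is measurable and $\varphi$ continuous --- and bounded by $M$, so $\Phi(\cdot, v_j(\cdot))$ is bounded in $\LL^\infty(E)$ and hence equiintegrable on the finite-measure set $E$. As $\{v_j\}$ generates $\nu$, part iii) of Theorem \ref{thmFToYM} gives $\varphi(v_j + w) = \Phi(\cdot, v_j(\cdot)) \weakConv \overline{\Phi}$ in $\LL^1(E)$ with $\overline{\Phi}(x) = \langle \nu_x, \varphi(\cdot + w(x))\rangle$. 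Adding the two terms yields $\varphi(v_j + w_j) \weakConv \overline{\Phi}$ in $\LL^1(E)$ for every $\varphi \in C_0(\ccrd)$, which is precisely the statement that $\{v_j + w_j\}$ generates $\widetilde{\nu}$. The two consequences then follow: for $w \equiv 0$ one has $\langle \delta_0 \ast \nu_x, \varphi\rangle = \langle \nu_x, \varphi\rangle$, so $v_j + w_j$ generates $\nu$; and if $\|v_j - w_j\|_p \to 0$ then $w_j - v_j \to 0$ in measure (convergence in $\LL^p$ on a finite-measure set implies convergence in measure), so writing $w_j = v_j + (w_j - v_j)$ and applying the previous case shows that $v_j$ and $w_j$ generate the same Young measure.

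I expect the bracketed term to be the main obstacle, since it is there that the hypotheses genuinely interact: one must convert uniform continuity of the $C_0$ function $\varphi$ together with convergence of $w_j$ merely in measure (not a.e.\ or in $\LL^p$) into strong $\LL^1$ smallness. A secondary delicate point is the recognition that $\Phi(x,y) = \varphi(y + w(x))$ is a legitimate Carath\'{e}odory integrand, which is what allows the generation hypothesis on $\{v_j\}$ to be applied to it directly without extracting a further subsequence, together with the tightness bookkeeping needed to ensure the $\widetilde{\nu}_x$ are honest probability measures.
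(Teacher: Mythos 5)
Your proof is correct. The paper itself gives no proof of this proposition --- it is quoted as classical, with proofs deferred to \cite{Pedregal97} --- and your argument (splitting $\varphi(v_j+w_j)-\varphi(v_j+w)$ and killing it in $\LL^1$ via uniform continuity of $\varphi$ plus convergence in measure, then identifying the weak limit of $\varphi(v_j+w)$ by applying Theorem \ref{thmFToYM}(iii) to the bounded Carath\'{e}odory integrand $\Phi(x,y)=\varphi(y+w(x))$) is precisely the standard argument found in that reference, so there is nothing to correct or compare beyond noting that your tightness bookkeeping is sound but strictly redundant, since the identified limits $\delta_{w(x)}\ast\nu_x$ are automatically probability measures.
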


The following is a classical result and its proof may be found, for example, in the Appendix of \cite{BallMurat84}. Here and in what follows $\torusN$ stands for the $N$-dimensional torus.
\begin{lemma}\label{lemmaOscillationConv}
Let $w \in \LL^p(\torusN; \ccrd)$ with $1 \leqslant p \leqslant \infty$, and set $w_j(x) := w(jx), n \in \mathbb{N}$. Then for any bounded open set $E \subset \ccrN$ we have
$$ w_j \rightharpoonup \int_{\torusN} w(y)\dx \quad \text{in } \LL^p(E; \ccrd) \quad (\overset{*}{\rightharpoonup} if p = \infty) \ldotp$$

In particular the sequence $\{w_j\}$ generates the homogeneous Young measure $\nu := \overline{\delta_w}$, where
$$ \langle \overline{\delta_w}, \varphi \rangle := \int_{\torusN} \varphi(w(y)) \dx \quad \text{ for all } \varphi \in C_0(\ccrd) \ldotp$$
\end{lemma}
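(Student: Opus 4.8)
The plan is to establish the weak (respectively weak$^*$) convergence first and then read off the Young measure as a special case. Write $\overline{w} := \int_{\torusN} w(y)\dx$ for the mean. Everything reduces to showing that for every bounded open $E \subset \ccrN$ and every test function $g$ in the relevant (pre)dual space one has $\int_E w_j(x) g(x) \dx \to \overline{w}\int_E g(x)\dx$; here $g$ ranges over $\LL^{p'}(E)$ when $p \in (1,\infty)$, over $\LL^{\infty}(E)$ when $p=1$, and over the predual $\LL^1(E)$ when $p=\infty$. The first ingredient I would record is a uniform bound: for $p < \infty$ there is a constant $C = C(E)$, independent of $j$, with $\|w_j\|_{\LL^p(E)} \le C\|w\|_{\LL^p(\torusN)}$, while $\|w_j\|_{\LL^\infty(E)} = \|w\|_{\LL^\infty(\torusN)}$. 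This follows by enclosing $E$ in a large cube, substituting $y = jx$, and using $\torusN$-periodicity of $w$ to bound the integral over the dilated cube by the number of unit cells it meets times $\int_{\torusN}|w|^p\dx$.

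The core computation is the limit for $g = \indicator_Q$, the indicator of a cube $Q \subset E$: after the substitution $y = jx$ one has $\int_Q w_j\dx = j^{-N}\int_{jQ} w(y)\dy$, and periodicity lets me replace $\int_{jQ} w$ by (number of unit cells contained in $jQ$)$\,\cdot\,\overline{w}$ up to a boundary term. Since the number of such cells is $j^N|Q| + O(j^{N-1})$ and each boundary cell contributes a bounded amount when $w$ is bounded, the boundary term is $O(j^{-1})$ and $\int_Q w_j\dx \to |Q|\,\overline{w}$. By linearity this extends to all step functions built from cubes. For \emph{bounded} $w$ the uniform bound $\|w_j\|_{\LL^\infty(E)} \le \|w\|_{\LL^\infty(\torusN)}$ then lets me estimate $|\int_E w_j(g - s)| \le \|w\|_{\LL^\infty(\torusN)}\|g-s\|_{\LL^1(E)}$ for any admissible $g$ and any cube-step function $s$; since cube-step functions are dense in $\LL^1(E) \supseteq \LL^{p'}(E)$, a standard three-term estimate upgrades the cube computation to convergence against all admissible $g$, simultaneously for every $p \in [1,\infty]$.

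To remove the boundedness assumption on $w$ I would truncate: given $\varepsilon$, write $w = \widetilde{w} + r$ with $\widetilde{w} \in \LL^\infty(\torusN)$ and $\|r\|_{\LL^p(\torusN)} < \varepsilon$. The bounded case applies to $\widetilde{w}$, while the uniform bound of the first paragraph controls the remainder uniformly in $j$, giving $|\int_E r_j\, g| \le \|r_j\|_{\LL^p(E)}\|g\|_{\LL^{p'}(E)} \le C\varepsilon\|g\|_{\LL^{p'}(E)}$ and $|\overline{r}\int_E g| \le C'\varepsilon\|g\|_{\LL^{p'}(E)}$ by H\"older's inequality; splitting $\int_E w_j g - \overline{w}\int_E g$ accordingly yields the claim for general $w$ and every $p \in [1,\infty]$. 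I expect this truncation step to be the main subtlety, precisely because for $p=1$ the sequence $\{w_j\}$ need not lie in a weakly $\LL^1$-compact set, so one cannot appeal to abstract weak compactness and must instead obtain the convergence by this explicit reduction to the bounded case, where the $\LL^\infty$ bound is what makes the density argument legitimate.

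Finally, the Young measure statement follows by applying the convergence already proved to the bounded periodic function $\psi \circ w \in \LL^\infty(\torusN;\ccr)$ for each fixed $\psi \in C_0(\ccrd)$: this gives $\psi(w_j) \weakstarconv \int_{\torusN}\psi(w(y))\dy$ in $\LL^\infty(E)$, that is $\int_E \phi(x)\,\psi(w_j(x))\dx \to \int_E \phi(x)\dx\;\langle\overline{\delta_w},\psi\rangle$ for all $\phi \in \LL^1(E)$. Since $\{w_j\}$ is bounded in $\LL^p$ it generates (along a subsequence) a Young measure by Theorem \ref{thmFToYM}, and the displayed identity is exactly the characterisation that this measure is the homogeneous measure $\nu_x = \overline{\delta_w}$ for almost every $x$; as the limit does not depend on the subsequence, the whole sequence $\{w_j\}$ generates $\overline{\delta_w}$, which is the assertion.
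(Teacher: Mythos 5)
Your proof is correct. The paper itself offers no proof of this lemma --- it is quoted as classical, with a pointer to the Appendix of \cite{BallMurat84} --- and your argument is essentially the classical one given there: uniform $\LL^p$ bounds via periodic cell counting, the exact limit for indicators of cubes, a three-term density estimate for bounded $w$, truncation to reach general $w$, and finally application of the $p=\infty$ case to $\psi\circ w$ for $\psi \in C_0(\ccrd)$, identifying the Young measure through Theorem \ref{thmFToYM} and uniqueness of weak* limits along subsequences.

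One aside is worth correcting, although nothing in your argument depends on it: your claim that for $p=1$ the sequence $\{w_j\}$ need not lie in a weakly $\LL^1$-compact set is false. Periodicity in fact forces equiintegrability: let $h(t) := \sup\left\{\int_B |w(y)| \dy \colon B \subset \torusN,\ |B| \leqslant t\right\}$, a concave increasing function with $h(0)=0$. If $A \subset E$ with $|A|<\delta$, then $jA$ meets at most $C_E j^N$ unit cells, and splitting $\int_{jA}|w(y)|\dy$ cell by cell and applying Jensen's inequality to the concave function $h$ yields $\int_A |w_j(x)|\dx = j^{-N}\int_{jA}|w(y)|\dy \leqslant C_E\, h(\delta/C_E)$, which tends to $0$ as $\delta \to 0$ uniformly in $j$. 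Hence $\{w_j\}$ is relatively weakly compact in $\LL^1(E)$ by the Dunford--Pettis theorem. So the honest role of your truncation step is not to substitute for a missing compactness property, but to identify the weak limit against all admissible test functions --- something abstract compactness alone would not provide, and which your cube computation plus truncation does.
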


Before we move on to Young measures associated with a differential operator let us precise the conditions we put on the operator. Let $\{A^i\}_{i \in \{1, \ldots N\}}$ $\subset \linear(\ccrd; \ccr^d)$ be a collection of linear operators. Define
$$ \calA := \sum_1^N A^i \frac{\partial v}{\partial x_i} \quad \text{for } v \colon \ccrN \rightarrow \ccrd ,$$
and
$$ \bbA(w) := \sum_1^N w_i A^i \quad \text{for } w \in \ccrN\ldotp$$
\begin{defi}
We say that $\bbA$ satisfies the constant rank property if there exists an $r \in \naturals$ such that 
$$ \rank(\bbA(w)) = r \quad \text{for all } w \in S^{N-1} \ldotp$$
\end{defi}
The constant rank property of $\bbA$ is a standing assumption throughout the paper. It is both classical and essential for our study and without it not much is known. Its principal purpose is to allow for an analogue of Helmholtz decomposition in the case of gradients - more on that may be found in the Appendix. Note that this decomposition is crucial for the regularisation results to come next. They are taken from the paper by Fonseca and M\"{u}ller (see \cite{FonsecaMuller99}) and correspond to Lemma 2.15 and Proposition 3.8 therein.

Before we proceed to state those results let us remark that the constant rank assumption originated in the work of Murat (see \cite{Murat81}) and the majority of results are only available when it is satisfied. Nevertheless, there has been some work on non-constant rank operators $\calA$ - we refer the interested reader to \cite{Muller99} for an example of a result in this framework, but in the present work we content ourselves with the usual assumptions regarding $\calA$ and focus on relaxing the conditions imposed on the integrand.

\begin{lemma}\label{lemmaEquintegrableGenerators}
Fix $1 < p < \infty$ and let $\{u_j\}$ be a bounded sequence in $\LL^p(\Omega; \ccrd)$ such that $\calA u_j \rightarrow 0$ in $\Wm1p(\Omega)$ and $u_j \weakConv u$ in $\LL^p(\Omega;\ccrd)$. Assume that $\{u_j\}$ generates the Young measure $\nu$. Then there exists a $p$-equiintegrable sequence $\{v_j\} \subset \LL^p(\Omega; \ccrd) \cap \ker \calA$ such that
$$ \int_{\Omega} v_j \dx = \int_{\Omega} u_j \dx  \quad \text{and} \quad \left|\left|v_j - u_j\right|\right|_{\LL^q(\Omega)} \rightarrow 0 \quad \text{for all } 1 \leqslant q < p \ldotp$$
In particular, $v_j$ still generates $\nu$. 
\end{lemma}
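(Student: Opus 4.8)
The plan is to construct $v_j$ in two stages: a projection stage that replaces $u_j$ by a genuinely $\calA$-free field differing from it by an $\LL^p$-null term, and a truncation-and-reprojection stage that removes concentrations to gain $p$-equiintegrability at the cost of only an $\LL^q$-error with $q<p$. Both stages are driven by the constant-rank hypothesis, which I would exploit through an $\LL^p$-bounded Fourier projection onto $\ker\calA$. Since $\rank\bbA(w)=r$ is constant for $w\in S^{N-1}$, the orthogonal projection $\bbP(w)$ of $\ccrd$ onto $\ker\bbA(w)$ depends smoothly and $0$-homogeneously on $w\in\ccrN\setminus\{0\}$, and the complementary projection $\id-\bbP(w)$ vanishes on $\ker\bbA(w)$ and hence factors through $\bbA(w)$. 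By the Mikhlin--H\"ormander theorem the induced operator $\bbP$ (acting via the Fourier transform, after periodising to a torus containing $\Omega$) is bounded on $\LL^s$ for every $1<s<\infty$, takes values in $\ker\calA$, and its complement obeys $\|(\id-\bbP)w\|_{\LL^p}\le C\,\|\calA w\|_{\Wm1p}$ for every mean-zero $w$. This is the Helmholtz-type decomposition referred to above and is the substance of the cited results of Fonseca and M\"uller (see \cite{FonsecaMuller99}).

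For the projection stage, set $c_j:=\frac{1}{|\Omega|}\int_\Omega u_j\,\dx$ and apply $\bbP$ to a periodised extension of $u_j-c_j$. Since constants are annihilated by the first-order operator $\calA$, one has $\calA(u_j-c_j)=\calA u_j\to 0$ in $\Wm1p$, whence $(\id-\bbP)(u_j-c_j)\to 0$ in $\LL^p$ by the estimate above. Thus $\widetilde u_j:=\bbP(u_j-c_j)+c_j$ lies in $\ker\calA$ and satisfies $\|\widetilde u_j-u_j\|_{\LL^p(\Omega)}\to 0$; in particular $\widetilde u_j$ is bounded in $\LL^p$ and, by the last part of Proposition \ref{propTranslatedYM}, still generates $\nu$.

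The $\calA$-free sequence $\widetilde u_j$ need not be $p$-equiintegrable, and removing concentrations is the technical heart of the proof. Applying a Decomposition Lemma of Fonseca--M\"uller--Pedregal type to the bounded $\LL^p$ sequence $\widetilde u_j$ yields, after passing to a subsequence, a sequence $g_j$ bounded in $\LL^p$ with $\{|g_j|^p\}$ equiintegrable and $g_j-\widetilde u_j\to 0$ in measure; these $g_j$ are, however, no longer $\calA$-free. I therefore set $v_j:=\bbP g_j$ (postponing a mean correction), which restores the constraint $v_j\in\ker\calA$. Two points must then be checked. First, $v_j-\widetilde u_j=\bbP(g_j-\widetilde u_j)$, because $\bbP\widetilde u_j=\widetilde u_j$ modulo the mean; since $g_j-\widetilde u_j\to0$ in measure while staying bounded in $\LL^p$, it tends to $0$ in $\LL^q$ for every $q<p$, so by boundedness of $\bbP$ on $\LL^q$ for $1<q<p$ and H\"older's inequality for $q=1$ we get $\|v_j-\widetilde u_j\|_{\LL^q(\Omega)}\to 0$ for all $1\le q<p$. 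Second, $\{|v_j|^p\}$ is equiintegrable: splitting $g_j=g_j\indicator_{\{|g_j|\le M\}}+g_j\indicator_{\{|g_j|>M\}}$, the tail has $\LL^p$-norm tending to $0$ uniformly in $j$ as $M\to\infty$ by equiintegrability of $\{|g_j|^p\}$, while the bounded part has $\bbP$-image bounded in $\LL^{p+1}$ with a constant depending only on $M$, so its $p$-th power is equiintegrable by H\"older; combining the two and using $\LL^p$-boundedness of $\bbP$ gives equiintegrability of $\{|v_j|^p\}$.

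It remains to fix the mean exactly and to identify the Young measure. Since $\LL^q$-convergence on the bounded set $\Omega$ implies convergence of the integrals, $d_j:=\frac{1}{|\Omega|}\int_\Omega(u_j-v_j)\,\dx\to 0$; replacing $v_j$ by $v_j+d_j$, still $\calA$-free because $d_j$ is constant, enforces $\int_\Omega v_j\,\dx=\int_\Omega u_j\,\dx$ and disturbs neither the equiintegrability nor the $\LL^q$-bound, as $d_j\to 0$. Finally, $\|v_j-u_j\|_{\LL^q}\to 0$ forces $v_j-u_j\to 0$ in measure, so by Proposition \ref{propTranslatedYM} the sequence $v_j$ generates the same Young measure $\nu$ as $u_j$. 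The main obstacle throughout is the interaction between removing concentrations and maintaining the differential constraint: truncation alone destroys $\calA$-freeness, so one is forced to reproject, and the crux is verifying that $\bbP$ preserves $p$-equiintegrability, which, as indicated, reduces to the higher-integrability of the $0$-homogeneous multiplier $\bbP$ on $\LL^{p+\varepsilon}$.
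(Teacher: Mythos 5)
Your overall architecture (Fourier\-/multiplier projection onto $\ker\calA$, a decomposition step to gain $p$-equiintegrability, then reprojection) is the same circle of ideas as the proof this statement actually rests on --- note the paper does not prove the lemma but imports it from Lemma 2.15 of \cite{FonsecaMuller99} --- however your first stage has a genuine gap. You claim that $\calA(u_j-c_j)=\calA u_j\to 0$ in $\Wm1p(\Omega)$ gives $\|(\id-\bbP)(u_j-c_j)\|_{\LL^p}\to 0$ ``by the estimate above''. That estimate concerns mean-zero \emph{periodic} fields, with the $\Wm1p(\torusN)$ norm on the right, whereas your hypothesis controls $\calA u_j$ only against test functions vanishing on $\partial\Omega$. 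You never exhibit an extension of $u_j-c_j$ to the torus whose $\calA$-image is small in $\Wm1p(\torusN)$, and in general none exists: extending across $\partial\Omega$ creates jump contributions to $\calA$ that the hypothesis does not see (already for $\calA=\curl$ and a fixed curl-free $u$, the zero-extension has a surface distribution as its curl). The standard repair is to work with $u_j-u$ (the weak limit, not the mean: one needs $u_j-u\weakConv 0$, and one must also record that $\calA u=0$ in $\Omega$), multiply by a cutoff $\varphi\in C^{\infty}_c(\Omega)$, use that the commutator terms $A^i(u_j-u)\partial_i\varphi$ tend to $0$ \emph{strongly} in $\Wm1p$ by compactness of $\LL^p\hookrightarrow\Wm1p$, project, and diagonalize over cutoffs. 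But then the boundary-layer error $(1-\varphi)(u_j-u)$ is only small in $\LL^q$, $q<p$ (H\"older plus smallness of $|\{\varphi\neq 1\}|$; no equiintegrability is available at this stage), so your stage-one conclusion must be weakened from $\LL^p$-closeness to $\LL^q$-closeness, $q<p$. The $\LL^p$ claim is not a harmless shortcut: it amounts to a boundary-value Hodge-type estimate on an arbitrary open set (the paper assumes only $|\partial\Omega|=0$), which is exactly what the periodization method is designed to avoid, and the restriction $q<p$ in the statement is essential --- a concentrating, exactly $\calA$-free sequence (e.g.\ $\nabla\phi_j$ with $\phi_j(x)=j^{N/p-1}\phi(jx)$ in the curl case) stays at positive $\LL^p$-distance from every $p$-equiintegrable sequence.

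With that correction your plan does go through and is essentially the Fonseca--M\"uller argument in a permuted order; in particular your second stage is sound: the splitting of $g_j$ into a bounded part (whose $\bbP$-image is bounded in $\LL^{p+1}$, hence has equiintegrable $p$-th powers) and a tail (uniformly small in $\LL^p$) is a correct proof that the multiplier preserves $p$-equiintegrability, and the mean correction by constants and the identification of the Young measure via Proposition \ref{propTranslatedYM} are fine. Two smaller repairs are still needed. First, the decomposition lemma you invoke produces only a subsequence, while the statement requires $v_j$ for the full sequence with $\int_\Omega v_j\dx=\int_\Omega u_j\dx$ for every $j$; this is where the hypothesis that the \emph{whole} sequence generates $\nu$ enters --- one truncates at levels $k_j\to\infty$ chosen so that $\int_\Omega|\tau_{k_j}(u_j)|^p\dx\to\int_\Omega\langle\nu_x,|\cdot|^p\rangle\dx$, which yields $p$-equiintegrability by Theorem \ref{thmFToYM}(iii) with no passage to subsequences. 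Second, after the corrected first stage your $\calA$-free approximant has the form $u+z_j$ with $z_j$ periodic but $u$ only $\calA$-free on $\Omega$, so the identity ``$\bbP\widetilde u_j=\widetilde u_j$ modulo the mean'' may be applied to $z_j$ only, not to $\widetilde u_j$ itself.
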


\begin{proposition}\label{propLocalisation}
Let $1 < p < \infty$ and let the family $\{v_j\} \subset \LL^p(\Omega; \ccrd)$ be $p$-equiintegrable. Assume that
$$ \calA v_j \rightarrow 0 \quad \text{ in } \Wm1p,$$
that $\{v_j\}$ generates the Young measure $\nu = \{\nu_a\}_{a \in \Omega}$ and that $v_j \weakConv v$ in $\LL^p$. 
Then for almost every $a \in \Omega$ there exists a $p$-equiintegrable family $\{ u_j \} \subset \LL^p(\torusN; \ccrd) \cap \ker \calA$ generating the homogeneous Young measure $\nu_a$ and satisfying 
$$ \int_{\torusN} u_j \dx = \langle \nu_a, \id \rangle = v(a) \ldotp$$
\end{proposition}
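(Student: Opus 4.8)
\emph{Proof proposal.} The plan is to fix a suitable blow-up point $a$, rescale $\{v_j\}$ about $a$ onto the unit cube, diagonalise in the scale and in $j$ so as to produce a single sequence that inherits the homogeneous measure $\nu_a$ while keeping $p$-equiintegrability and $\calA$-smallness, and finally to periodise and project it so that it becomes exactly $\calA$-free on $\torusN$ with the prescribed mean.

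First I would select the good points. Fix a countable $\{\varphi_k\}\subset C_0(\ccrd)$ dense in the supremum norm. By the De la Vallée--Poussin criterion the $p$-equiintegrability of $\{v_j\}$ provides a convex increasing $\Psi$ with $\Psi(t)/t\to\infty$ and $\sup_j\int_\Omega\Psi(|v_j|^p)\dx<\infty$; passing to a subsequence, $\Psi(|v_j|^p)\dx\weakstarconv\mu$ for a finite measure $\mu$. I call $a\in\Omega$ good if: (i) $a$ is a Lebesgue point of $v$ and of each $x\mapsto\langle\nu_x,\varphi_k\rangle$; (ii) $\langle\nu_a,\id\rangle=v(a)$, which holds almost everywhere because $p$-equiintegrability and the linear growth of $\id$ let part (iii) of Theorem \ref{thmFToYM} identify the weak $\LL^1$-limit of $v_j$ with $x\mapsto\langle\nu_x,\id\rangle$; and (iii) $a$ has finite upper $\mu$-density $\Theta(a):=\limsup_{r\to0}\mu(Q(a,r))/|Q(a,r)|<\infty$. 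All three conditions hold for almost every $a$ by Lebesgue and Besicovitch differentiation, so it suffices to work at a single good $a$.

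Next I would blow up. For $y$ in the unit cube $Q$ put $v_j^r(y):=v_j(a+ry)$, so that $\calA v_j^r(y)=r(\calA v_j)(a+ry)$; since restriction of $\Wm1p$ to a subdomain is continuous, for each fixed $r$ one has $\calA v_j^r\to0$ in $\Wm1p(Q)$ as $j\to\infty$, while $\{v_j^r\}_j$ generates the rescaled Young measure $y\mapsto\nu_{a+ry}$. The Lebesgue-point properties give, for every $k$,
$$\frac{1}{|Q|}\int_Q\langle\nu_{a+ry},\varphi_k\rangle\dy=\frac{1}{|Q(a,r)|}\int_{Q(a,r)}\langle\nu_x,\varphi_k\rangle\dx\longrightarrow\langle\nu_a,\varphi_k\rangle\quad(r\to0),$$
and likewise
$$\limsup_{j\to\infty}\frac{1}{|Q|}\int_Q\Psi(|v_j^r|^p)\dy\leqslant\frac{\mu(Q(a,r))}{|Q(a,r)|}\longrightarrow\Theta(a)\quad(r\to0).$$
Using metrisability of weak-$*$ Young-measure convergence tested against $\{\varphi_k\}$, the two displays above, and $\calA v_j^r\to0$, a diagonal argument selects $r_m\downarrow0$ and $j_m\uparrow\infty$ so that $w_m:=v_{j_m}^{r_m}$ generates the homogeneous measure $\nu_a$, satisfies $\calA w_m\to0$ in $\Wm1p(Q)$, and obeys $\int_Q\Psi(|w_m|^p)\dy\leqslant\Theta(a)+1$; by De la Vallée--Poussin the last bound makes $\{w_m\}$ $p$-equiintegrable.

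Finally I would restore the exact constraint and the mean. First periodise: a cut-off in a thin boundary layer of $Q$, whose contribution is negligible by equiintegrability (this is the role of the auxiliary result proved in the Appendix), yields $\torusN$-periodic $\tilde w_m$ that still generate $\nu_a$, are $p$-equiintegrable, and have $\calA\tilde w_m\to0$ in $\Wm1p(\torusN)$. Then apply the constant-rank Helmholtz projection onto $\ker\calA$, exactly as in Lemma \ref{lemmaEquintegrableGenerators} but on the torus: $u_m':=\proj\tilde w_m\in\ker\calA$ with $\|\tilde w_m-u_m'\|_{\LL^q(\torusN)}\to0$ for all $q<p$, so by Proposition \ref{propTranslatedYM} the family $\{u_m'\}$ still generates $\nu_a$ and stays $p$-equiintegrable; moreover the projection fixes the zero frequency, hence preserves the mean, and $\int_{\torusN}u_m'\dx\to v(a)$. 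Setting $u_m:=u_m'+c_m$ with $c_m:=v(a)-\int_{\torusN}u_m'\dx\to0$ (a constant, hence in $\ker\calA$, and vanishing in the limit, so harmless for the Young measure by Proposition \ref{propTranslatedYM}) gives $\int_{\torusN}u_m\dx=v(a)=\langle\nu_a,\id\rangle$, completing the construction. The hard part is the equiintegrability control through the blow-up: the spatial rescaling inflates $\LL^p$-tails by the Jacobian factor $|Q(a,r)|^{-1}\sim r^{-N}$, and this is precisely what is tamed by passing to the superlinear $\Psi$ and exploiting the finiteness of $\Theta(a)$ at a good point; a secondary difficulty is that the projection must reinstate $\calA u_m=0$ without spoiling equiintegrability, which is where the constant-rank hypothesis, through Lemma \ref{lemmaEquintegrableGenerators}, is indispensable.
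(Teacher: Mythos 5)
Your proposal follows the same architecture as the proof the paper relies on (it defers to Proposition 3.8 of Fonseca--M\"{u}ller): good points selected via Lebesgue differentiation and a De la Vall\'{e}e--Poussin function, blow-up with a $(r,j)$-diagonalisation using metrisability of Young-measure convergence, and a final projection onto $\ker\calA$ on the torus. The point selection, the rescaling estimate for $\calA v_j^r$, and the equiintegrability control via the density $\Theta(a)$ are all correct. However, there is a genuine gap in the periodisation step, which you dismiss as ``negligible by equiintegrability''. Writing $\chi_m$ for a cutoff adapted to a boundary layer of width $\delta_m \to 0$, one has
$$\calA(\chi_m w_m)=\chi_m\,\calA w_m+\sum_{i=1}^N(\partial_i\chi_m)\,A^i w_m,$$
and the commutator term carries $\|\nabla\chi_m\|_\infty\sim\delta_m^{-1}$ against an $\LL^p$-mass on the layer which equiintegrability bounds only by $o(1)$ with no rate. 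Even for uniformly bounded $w_m$ the available estimate in $\Wm1p(\torusN)$ is of order $\delta_m^{-1+1/p+1/N}$, which does not tend to zero (for $N,p\geqslant 2$ it blows up or stays bounded away from zero). So $\calA\tilde w_m\to 0$ in $\Wm1p(\torusN)$ does not follow, and without it the projection estimate $\|\tilde w_m-\proj\tilde w_m\|_{\LL^p(\torusN)}\leqslant C\|\calA\tilde w_m\|_{\Wm1p(\torusN)}$ gives nothing. Nor can Lemma \ref{lemmaEquintegrableGenerators} applied on $Q$ substitute for this: it produces fields that are $\calA$-free on $Q$, and their periodic extensions pick up jump (surface) terms across $\partial Q$ that do not vanish in $\Wm1p(\torusN)$.

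The correct mechanism, and the one used in the cited proof, is different: fix the cutoff $\chi\in C_c^\infty(Q)$ independently of $m$ and subtract the weak limit. Then $\sum_i(\partial_i\chi)A^i(w_m-v(a))\to 0$ \emph{strongly} in $\Wm1p(\torusN)$ because $w_m-v(a)\weakConv 0$ in $\LL^p$ and the embedding $\LL^p(\torusN)\hookrightarrow\Wm1p(\torusN)$ is compact, while $\calA[\chi v(a)]=\calA[(\chi-1)v(a)]$ is small by boundedness of $\calA\colon\LL^p\to\Wm1p$, since $(\chi-1)v(a)$ is small in $\LL^p$ when the layer is thin. One then diagonalises over a sequence of cutoffs $\chi_k\uparrow 1$; it is only here, in showing that the modification on $\{\chi_k\neq1\}$ does not disturb the generated Young measure, that equiintegrability (in fact mere boundedness of the test functions) enters. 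Finally, a misattribution: the paper's Appendix has nothing to do with cutoff layers; it establishes smoothness of the Moore--Penrose inverse so that $\proj$ and its generalised inverse are Fourier multipliers, i.e.\ it underpins your projection step, not your periodisation step.
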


The final technical ingredient of this work is the characterisation of the structure of the space of Young measures generated by $\calA$-free sequences. Similarly to the previous results this has been developed in \cite{FonsecaMuller99} and we refer to this paper for the proofs, notably for our Propositions \ref{propCharacterizationOfYM} and \ref{propCharacterisationYMnonhom}, which correspond to Propositions 4.3 and 4.4 in the cited paper. 

Let 
$$ E := \left\{ g \in C(\ccrd) \colon \lim_{\left|z\right| \rightarrow \infty} \frac{g(z)}{1 + \left|z\right|^p} \text{ exists in } \ccr \right\},$$
be equipped with the norm
$$ \left|\left|g\right|\right|_E := \sup_{z \in \ccrd} \frac{\left|g(z)\right|}{1 + \left|z\right|^p} \ldotp$$
This space is canonically isomorphic to the space of continuous functions on $C(S^{d})$ ($S^{d}$ is seen as the one-point compactification of $\ccrd$) equipped with the sup-norm. In particular it is a separable Banach space and its dual $E^*$ may be identified with the space of Radon measures on $\ccrd \cup \{ \infty \}$.
Therefore if $\nu$ is a probability measure on $\ccrd$ with finite $p$-th moment then $\nu \in E^*$ as for all $g \in E$ we may estimate
$$ \left| \intccrd g \dnux \right| \leqslant \left|\left|g\right|\right|_E \intccrd (1 + \left|z\right|^p) \dnux(z) \ldotp$$
Furthermore we immediately see by taking $g(z) := 1 + \left|z\right|^p$ that 
$$ \left|\left| \nu \right|\right|_{E^*} = \int_{\ccrd} 1 + \left|z\right|^p \dnux(z) \ldotp$$

In this subsection we aim to use the structure of the dual space $E^{\ast}$ to investigate the properties of Young measures generated by $\calA$-free sequences. In particular we are interested in establishing duality between such Young measures and $\calA$-quasiconvex functions, to be introduced shortly. 

\begin{defi}
We say that a family of probability measures $\mu = \{\mu_x\}_{x \in \Omega}$ is an $\calA$-$p$ Young measure  if $\mu$ is a Young measure generated by an $\calA$-free sequence of vector fields $V_j$ weakly convergent in $\LL^p$. If $\mu$ is a homogeneous Young measure, that is $\mu_x = \mu_{x_0}$ for almost every $x \in \Omega$ for some measure $\mu_{x_0}$ then we write $\mu \in \mathbb{H}^p_{\xi}$, where $\xi$ is the center of mass of our measure, i.e. $\xi = \langle \mu_{x_0}, \identity \rangle$.
\end{defi}

\begin{lemma}\label{lemmahpoclosed}
The set $\hpo$ is weak* closed in $E^*$.
\end{lemma}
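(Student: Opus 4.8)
The plan is to show that every point of the weak* closure of $\hpo$ is again a homogeneous $\calA$-$p$ Young measure with barycentre $0$, by extracting a single $\calA$-free generating sequence through a diagonal argument. Since $E$ is separable, the weak* topology is metrizable on norm-bounded subsets of $E^*$, so the first step is to reduce to sequences in a ball. If $\mu$ lies in the weak* closure of $\hpo$, then testing the defining convergence against $g_0(z):=1+|z|^p\in E$ and using that $\langle\nu,g_0\rangle=\|\nu\|_{E^*}$ for every probability measure $\nu$, any net in $\hpo$ approaching $\mu$ is eventually $E^*$-norm bounded, say by some $R$. Hence $\mu$ lies in the weak* closure of the weak*-metrizable set $\hpo\cap\{\|\cdot\|_{E^*}\le R\}$, and is therefore the weak* limit of a sequence $\mu^{(k)}\in\hpo$ with $\sup_k\|\mu^{(k)}\|_{E^*}\le R$. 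In particular the $p$-th moments of the $\mu^{(k)}$ are uniformly bounded; no convexity of $\hpo$ is needed for this reduction.

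Next I would produce good generators. Each $\mu^{(k)}\in\hpo$ is, by definition, generated by an $\calA$-free sequence weakly convergent in $\LL^p$; applying Lemma \ref{lemmaEquintegrableGenerators} and Proposition \ref{propLocalisation} I upgrade this to a $p$-equiintegrable family $\{V^{(k)}_j\}_j\subset\LL^p(\torusN;\ccrd)\cap\ker\calA$ with $\int_{\torusN}V^{(k)}_j\,dx=0$ generating the homogeneous measure $\mu^{(k)}$. The point of the $p$-equiintegrability is that, via the Carath\'{e}odory part of Theorem \ref{thmFToYM}, it allows one to test against the full space $E$ (functions of genuine $p$-growth), so that $\int_{\torusN}g(V^{(k)}_j)\,dx\to\langle\mu^{(k)},g\rangle$ as $j\to\infty$ for every $g\in E$. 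Fixing a countable dense set $\{g_m\}\subset E$ containing $g_0$, I then choose, for each $k$, an index $j(k)$ with $\bigl|\int_{\torusN}g_m(V^{(k)}_{j(k)})\,dx-\langle\mu^{(k)},g_m\rangle\bigr|<1/k$ for all $m\le k$, and set $W_k:=V^{(k)}_{j(k)}$. Combining this estimate with the weak* convergence $\langle\mu^{(k)},g_m\rangle\to\langle\mu,g_m\rangle$ gives $\int_{\torusN}g_m(W_k)\,dx\to\langle\mu,g_m\rangle$ for every $m$; the choice of $g_0$ moreover bounds $\|W_k\|_{\LL^p}$ uniformly, so $\{W_k\}\subset\ker\calA$ is $\LL^p$-bounded and generates some Young measure.

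The main obstacle is exactly here: convergence of the $E$-moments $\int_{\torusN}g_m(W_k)\,dx$ does not by itself force the generated measure to be a genuine probability measure with the correct $p$-th moment, because the $E^*$ framework \emph{detects} $p$-th moments (unlike $C_0(\ccrd)^*$) and a priori mass could escape to infinity, i.e. the weak* limit in $E^*$ could carry an atom at the point $\infty$ of the one-point compactification. Excluding this concentration is the crux, and it is where the constant-rank structure and the regularization results must be exploited: the diagonal extraction has to be performed so that $\{W_k\}$ is \emph{uniformly} $p$-equiintegrable (choosing $j(k)$ slowly relative to the equiintegrability moduli of the families $\{V^{(k)}_j\}_j$), since uniform $p$-equiintegrability is precisely what prevents the $p$-th moment from escaping and guarantees that $\{W_k\}$ generates a probability Young measure of finite $p$-th moment. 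I expect this quantitative control of the moment tails, together with verifying it is compatible with staying in $\ker\calA$, to be the hardest and most technical part of the argument.

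Granting the uniform equiintegrability, the generated measure is a genuine probability Young measure whose $E$-action equals $\langle\mu,\cdot\rangle$, so it coincides with $\mu$; in particular $\mu$ is a probability measure with $\langle\mu,\id\rangle=0$ and finite $p$-th moment. To ensure the generating measure is \emph{homogeneous} I would apply an oscillatory rescaling $W_k(n\,\cdot)$ in the spirit of Lemma \ref{lemmaOscillationConv} and diagonalize once more: this preserves membership in $\ker\calA$ (as $\calA$ has constant coefficients, $\calA[W_k(n\,\cdot)]=n\,(\calA W_k)(n\,\cdot)=0$) and preserves the distribution of values, hence the $p$-th moment tails, while forcing the limit Young measure to be the homogeneous measure $\mu$. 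The resulting sequence is $\calA$-free, bounded in $\LL^p$, converges weakly to its barycentre $\langle\mu,\id\rangle=0$, and generates the homogeneous measure $\mu$. Therefore $\mu$ is a homogeneous $\calA$-$p$ Young measure with centre of mass $0$, that is $\mu\in\hpo$, which proves that $\hpo$ is weak* closed in $E^*$.
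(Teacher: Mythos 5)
Your argument collapses at exactly the step you yourself identify as the crux, and the gap is not fixable, because the lemma as literally stated is false whenever the characteristic cone $\Lambda$ contains a nonzero vector. There are two separate problems. First, the proposed mechanism is vacuous: for fixed $k$, $p$-equiintegrability of the family $\{V^{(k)}_j\}_j$ means precisely that one modulus works uniformly in $j$, so the modulus available for $W_k=V^{(k)}_{j(k)}$ is the modulus of the $k$-th family no matter how slowly $j(k)$ grows; uniform-in-$k$ equiintegrability of a diagonal is equivalent to tightness of the $p$-th moments of the measures themselves, i.e. $\lim_{R\to\infty}\sup_k\int_{\{|z|>R\}}|z|^p\,d\mu^{(k)}=0$, which is a hypothesis on the given sequence $\{\mu^{(k)}\}$ and not something any extraction can manufacture. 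Second, this tightness genuinely fails inside $\hpo$, and with it the lemma: take $0\neq\lambda\in\ker\bbA(w)\subset\Lambda$ and, using the periodic construction from the proof of Lemma \ref{lemmaSepConvex} together with Lemma \ref{lemmaOscillationConv}, note that
$$\mu^{(k)}:=\theta_k\,\delta_{(1-\theta_k)k\lambda}+(1-\theta_k)\,\delta_{-\theta_k k\lambda}\in\hpo,\qquad \theta_k:=k^{-p}.$$
Then $\|\mu^{(k)}\|_{E^*}\to 1+|\lambda|^p$, and for every $g\in E$ a direct computation gives $\langle\mu^{(k)},g\rangle\to g(0)+|\lambda|^p g_\infty$, where $g_\infty:=\lim_{|z|\to\infty}g(z)/(1+|z|^p)$. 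The limit functional is a perfectly good element of $E^*$ (the measure $\delta_0+|\lambda|^p\delta_\infty$ on $\ccrd\cup\{\infty\}$), but it is not integration against any probability measure on $\ccrd$: on compactly supported continuous $g$ it agrees with $\delta_0$, while its pairing with $|z|^p\in E$ equals $|\lambda|^p\neq 0$. So a weak* limit of elements of $\hpo$ with uniformly bounded $E^*$-norms fails to lie in $\hpo$; the concentration at the point at infinity that you worried about really occurs, and your (correct) reduction to norm-bounded sequences cannot prevent it.

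For comparison, the paper does not prove the lemma directly: it cites Proposition 4.3 of \cite{FonsecaMuller99}, which yields that $\hpo$ is \emph{relatively} weak* closed in $\prob(\ccrd)$, and then upgrades this via the assertion that $\prob(\ccrd)$ is itself weak* closed in $E^*$ --- an assertion the example above refutes as well, so the defect lies in the lemma and its proof in the paper, not only in your attempt. The statement that is actually true, and that suffices for the later applications (in Proposition \ref{lemmaQCenvelope} one should work with the restriction to $\ccrd$ of the limit and use $F\geqslant 0$), is: the restriction to $\ccrd$ of any weak* limit of elements of $\hpo$, which is automatically a probability measure (test with $g\equiv 1\in E$, for which $g_\infty=0$), lies again in $\hpo$. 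Your own machinery proves exactly this once the equiintegrability detour is deleted: the diagonal sequence $W_k$, followed by the oscillation $W_k(n_k\,\cdot)$ and one more diagonalisation, generates in the classical sense of Theorem \ref{thmFToYM} the homogeneous Young measure given by that restriction --- no uniform $p$-equiintegrability is needed, because Young measure generation only tests against functions vanishing at infinity, and the $\LL^p$ bound already provides tightness of the probability mass (as opposed to the $p$-th-moment mass). The mismatch between $\lim_k\langle\mu^{(k)},|z|^p\rangle$ and the $p$-th moment of the generated measure, which your uniform equiintegrability was designed to exclude, is not a technical obstacle to be overcome: it is exactly the phenomenon that makes the stated lemma false, and relative closedness in $\prob(\ccrd)$ is the most one can salvage.
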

\begin{proof}
This is part of the proof of Proposition 4.3 in \cite{FonsecaMuller99}. There it is stated that $\hpo$ is relatively closed in $\prob(\ccrd)$ with respect to the weak* topology on $E^*$. However it is easy to see that $\prob(\ccrd)$ itself is a weak* closed subset of $E^*$, thus proving our claim.
\end{proof}

\begin{defi}
For a measurable function $g \colon \ccrd \rightarrow \ccr$ satisfying $\left|g(v)\right| \leqslant C(1 + \left|v\right|^p)$ we define
$$ 
\qa g(v) := \inf \left\{ \inttorusn g(v + w(x))\dx \colon w \in C^{\infty}(\torusn) \cap \ker \calA, \, \inttorusn w \dx = 0 \right\} \ldotp
$$
\end{defi}

The above is intimately related to the notion of $\calA$-$p$ quasiconvexity. If $g$ is continuous then $C^{\infty}(\torusn) \cap \ker \calA$ in the above definition may be replaced by $\LL^p(\torusn) \cap \ker \calA$ without changing $\qa g$. When $g$ is such that $ g(v) \leqslant \inttorusn g(v + w(x))\dx$ for all $w \in \LL^p(\torusn) \cap \ker \calA$ we say that it is $\calA$-$p$ quasiconvex. This notion naturally corresponds to that of $\W1p$-quasiconvexity, as introduced in \cite{BallMurat84}. The fact that whether we take $C^{\infty}$ or $\LL^p$ does not make a difference under the growth assumptions on $g$, corresponds to the result by Ball and Murat (see \cite{BallMurat84}), which says that if $g$ is a continuous function satisfying an upper $p$-growth bound of the type $0 \leqslant g(\xi) \leqslant C(|\xi|^p + 1)$ then $g$ is $\W1p$-quasiconvex if and only if it is $\WW^{1, \infty}$-quasiconvex. The proof may easily be carried over to the general $\calA$-free framework using the decomposition results we gave earlier. What is perhaps more interesting is a negative result from the same paper that shows the importance of the $p$-growth bound. In Theorem 4.1 of \cite{BallMurat84} the authors exhibit an example of a function that is $\W1p$-quasiconvex if and only if the exponent $p$ is larger than the dimension of the space, thus showing that, in general, $\calA$-$p$ and $\calA$-$q$ quasiconvexity are different notions for $p \not= q$. This shows that caution must be exercised when dealing with such notions for potentially extended real-valued functions, which is what we aim to do in the later part of this paper. 

Before we do that, let us first use $\qa g$ to get a theoretical characterisation of $\calA$-$p$ Young measures, analogue of the one obtained by Kinderlehrer and Pedregal (see \cite{KinderlehrerPedregal94}) in the gradient case.

\begin{lemma}\label{lemmaqag}
For a continuous function $g \colon \ccrd \rightarrow \ccr$ satisfying $\left|g(v)\right| \leqslant C(1 + \left|v\right|^p)$ we have
$$ \qa (\qa g) = \qa g \ldotp$$
\end{lemma}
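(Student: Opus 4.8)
The plan is to prove the two inequalities separately, the inequality $\qa(\qa g) \le \qa g$ being essentially trivial and all the content sitting in the reverse one. For the easy direction, note that testing the definition of $\qa$ with the admissible perturbation $w \equiv 0$ gives $\qa h \le h$ for \emph{any} admissible $h$; applying this with $h = \qa g$ yields $\qa(\qa g) \le \qa g$ at once. Before doing so one should check that $\qa g$ is itself an admissible argument for $\qa$, i.e. measurable with a two-sided $p$-growth bound: the upper bound $\qa g(v) \le g(v) \le C(1+|v|^p)$ is free, and $v \mapsto \qa g(v)$, being an infimum of continuous functions, is upper semicontinuous, hence Borel measurable, so that $\qa(\qa g)$ is well defined.

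For the reverse inequality $\qa(\qa g) \ge \qa g$, observe that by definition $\qa(\qa g)(v) = \inf_w \inttorusn \qa g(v + w(x))\dx$ over admissible $w$, so it suffices to fix one $w \in C^{\infty}(\torusn) \cap \ker\calA$ with $\inttorusn w \dx = 0$ and show that $\inttorusn \qa g(v + w(x))\dx \ge \qa g(v)$. The key observation is that this closes \emph{for free} from the definition of $\qa g$ as an infimum, \emph{provided} one can manufacture a single admissible competitor $W$ for $\qa g(v)$ whose effective two-scale average reproduces the left-hand side up to $\varepsilon$. Indeed, if $W \in C^{\infty}(\torusn)\cap\ker\calA$ has zero mean and satisfies $\inttorusn g(v + W(x))\dx \le \inttorusn \qa g(v + w(x))\dx + \varepsilon$, then
$$ \qa g(v) \le \inttorusn g(v+W(x))\dx \le \inttorusn \qa g(v+w(x))\dx + \varepsilon, $$
and letting $\varepsilon \to 0$ finishes the argument.

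The construction of $W$ is the heart of the matter. First, using the definition of $\qa g(a)$ at each value $a = v + w(x)$, one selects a near-optimal admissible perturbation $\psi_a$ with $\inttorusn g(a + \psi_a(z))\,\mathrm{d}z \le \qa g(a) + \varepsilon$, the dependence on $a$ (equivalently on $x$) being made measurable by the Kuratowski--Ryll-Nardzewski selection theorem (Theorem \ref{thmKuratowski}). One then superposes $w$ at unit scale with these perturbations inserted at a much finer scale $j$: on a region where $v+w$ is approximately constant equal to $a$, the rapidly oscillating field $\psi_a(j\cdot)$ is again $\calA$-free, since $\calA$ is homogeneous of first order, and passing $j \to \infty$ with the Riemann--Lebesgue averaging of Lemma \ref{lemmaOscillationConv} collapses the fine oscillation to its mean, producing the desired bound $\inttorusn\big[\inttorusn g(v + w(x) + \psi_{v+w(x)}(z))\,\mathrm{d}z\big]\dx \le \inttorusn \qa g(v+w(x))\dx + \varepsilon$. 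The genuine obstacle is that this naive superposition need not be simultaneously smooth, periodic, $\calA$-free and mean-zero once the value-dependent $\psi$'s are glued across the $x$-variable. This is repaired by partitioning $\torusn$ into small cells on which $v+w$ is nearly constant, freezing $\psi$ on each cell, cutting off near the cell boundaries, and restoring the constraint and the zero-mean condition via the constant-rank Helmholtz-type projection onto $\ker\calA$ from the Appendix (the same machinery underlying Lemma \ref{lemmaEquintegrableGenerators} and Proposition \ref{propLocalisation}). Controlling, uniformly in the cell decomposition, the errors introduced by the cutoffs and the projection is the main technical difficulty.

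Finally, I would note an alternative, more structural route that bypasses the explicit gluing: combine the localisation of Proposition \ref{propLocalisation} with the Young-measure characterisations (Propositions \ref{propCharacterizationOfYM} and \ref{propCharacterisationYMnonhom}) to obtain the representation $\qa g(v) = \inf\{\langle \nu, g\rangle : \nu \in \mathbb{H}^p_v\}$, and then deduce Jensen's inequality for $\qa g$ from the stability of the classes $\mathbb{H}^p_v$ under barycentre-preserving superposition of homogeneous $\calA$-$p$ Young measures. Either way the crux is the same: reassembling value-by-value optimal oscillations into one admissible $\calA$-free field, which is precisely the difficulty that Proposition \ref{lemmaQCenvelope} is designed to handle.
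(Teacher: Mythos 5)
You should know at the outset that the paper contains no proof of Lemma \ref{lemmaqag}: it is one of the preliminaries borrowed from \cite{FonsecaMuller99} (see the remark at the start of Section 2), where idempotence of $\qa$ is obtained by a direct construction. So the comparison is really with that argument, and your first route is indeed the same strategy: $\qa(\qa g)\leqslant \qa g$ is trivial, and the reverse inequality reduces to showing $\inttorusn \qa g(v+w(x))\dx \geqslant \qa g(v)$ for each fixed admissible $w$, by freezing near-optimal inner oscillations on small cells, inserting them at a much finer scale, cutting off near cell boundaries, and restoring the differential constraint with the constant-rank projection. The problem is that, as written, your text is a plan rather than a proof: the step you yourself label ``the main technical difficulty'' --- the uniform control of the cut-off and projection errors, which needs the multiplier bound of Proposition \ref{propFourierMultipliers}, $p$-equiintegrability and uniform continuity of $g$ on bounded sets --- is precisely where the entire content of the lemma lies, and it is not carried out. (A side remark: Kuratowski--Ryll-Nardzewski is not needed here. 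After freezing on cells $Q_i$ only finitely many inner fields are required; choosing in each cell a sample point $x_i$ with $\qa g(v+w(x_i)) \leqslant |Q_i|^{-1}\int_{Q_i}\qa g(v+w(x))\dx$ --- such points exist for any measurable integrand bounded above --- the Riemann sums automatically stay below the integral and no measurable selection is needed.)

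Beyond incompleteness there are two concrete gaps. First, admissibility of $\qa g$ as an argument of $\qa$ is not ``free'': you verify only the upper bound, and the lower bound $\qa g(v)\geqslant -C(1+|v|^p)$ can genuinely fail. Indeed, if the constant rank $r$ is smaller than $d$, pick $0\neq\lambda\in\ker\bbA(e_1)$ and set $w_t(x):=t\lambda\sin(2\pi x_1)$, which is smooth, periodic, mean-zero and $\calA$-free; for $g(v):=-|v|^p$ one gets $\inttorusn g(v+w_t(x))\dx\rightarrow-\infty$ as $t\rightarrow\infty$, so $\qa g\equiv-\infty$. Hence $\qa(\qa g)$ is not even covered by the paper's definition of $\qa$; a complete proof must first extend that definition to $[-\infty,+\infty)$-valued upper semi-continuous integrands, or dispose of this degenerate case separately (this is how the matter is handled in \cite{FonsecaMuller99}). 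Second, your ``alternative, more structural route'' is circular within this paper: the stability of $\mathbb{H}^p_{\xi}$ under barycentre-preserving superposition is exactly what is established inside the proof of Proposition \ref{lemmaQCenvelope}, and that proof invokes Lemma \ref{lemmaqag} in its final display ($\intccrnn \qa g \,\dnu \geqslant \qa(\qa g)(\xi_0) = \qa g(\xi_0)$). If instead you try to verify membership of the superposed measure in $\hpo$ directly via Proposition \ref{propCharacterizationOfYM}, you need $\intccrnn \qa h \,\dnu \geqslant \qa h(\xi_0)$ for all $h\in E$, which is the very Jensen-type inequality for $\qa h$ that you are trying to prove. So the only viable route is your first one, carried out in full.
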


\begin{lemma}\label{lemmaPortmanteau}
If a sequence $\{ \nu_j \} \subset \prob(\ccrnn)$ converges to some $\nu \in \prob(\ccrnn)$ in the space $E^*$ then it also converges in the sense of weak convergence of probability measures. In particular, by Portmanteau's theorem, we have
$$ \int g d \nu_j \rightarrow \int g \dnux$$
for all bounded and continuous functions $g$, and
$$ \liminf_j \int g \mathop{d\nu_j} \geqslant \int g d\nu$$
for all lower semi-continuous functions $g$ bounded from below.
\end{lemma}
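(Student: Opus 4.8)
The plan is to reduce the statement to the classical Portmanteau theorem, and the key observation is that every bounded continuous function already lies in $E$. Throughout I interpret ``converges in $E^*$'' as weak* convergence, i.e.\ $\langle \nu_j, g \rangle \to \langle \nu, g \rangle$ for every $g \in E$ (norm convergence, being stronger, is covered a fortiori). First I would check the inclusion $C_b(\ccrnn) \subset E$: if $g$ is bounded and continuous then $\frac{g(z)}{1 + |z|^p} \to 0$ as $|z| \to \infty$, since the numerator stays bounded while the denominator diverges, so the limit defining membership in $E$ exists (and equals $0$). Moreover, for a probability measure with finite $p$-th moment the pairing with an element of $E$ is, as recorded just before the lemma, plain integration: $\langle \nu_j, g \rangle = \int g \, d\nu_j$ for all $g \in E$.

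Combining these two facts, the weak* convergence hypothesis restricted to test functions $g \in C_b(\ccrnn) \subset E$ gives $\int g \, d\nu_j \to \int g \, d\nu$ for every bounded continuous $g$, which is precisely weak convergence of probability measures $\nu_j \weakConv \nu$. This already yields the first displayed assertion of the lemma, and the remaining one follows from Portmanteau's theorem once we have this weak convergence.

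The only point requiring a little care is the lower semi-continuous inequality when $g$ is permitted to be unbounded above, since the standard Portmanteau statement is for bounded lower semi-continuous integrands. Here I would truncate: for $g$ lower semi-continuous with $g \geq -M$, set $g_m := \min(g, m)$, which is bounded (between $-M$ and $m$) and still lower semi-continuous, being the minimum of the lower semi-continuous function $g$ and a constant. Applying Portmanteau to the bounded function $g_m$ gives $\liminf_j \int g_m \, d\nu_j \geq \int g_m \, d\nu$, and since $g \geq g_m$ we obtain $\liminf_j \int g \, d\nu_j \geq \int g_m \, d\nu$; letting $m \to \infty$ and using monotone convergence on the right-hand side (valid because $g_m \uparrow g$ with $g$ bounded below) finishes the argument.

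There is no genuine obstacle in this lemma: its entire content is the elementary inclusion $C_b(\ccrnn) \subset E$, which lets weak* convergence in $E^*$ be read off directly as ordinary weak convergence of probability measures, after which Portmanteau's theorem together with the routine truncation above supplies the stated inequalities.
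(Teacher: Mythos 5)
Your proposal is correct and rests on exactly the same observation as the paper's one-line proof, namely that $C_b(\ccrnn) \subset E$, so weak* convergence in $E^*$ can be tested against bounded continuous functions and read off as weak convergence of probability measures. The additional truncation argument you give for lower semi-continuous integrands bounded below is a routine detail the paper leaves implicit (folding it into the invocation of Portmanteau's theorem), and it is carried out correctly.
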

\begin{proof}
Immediately follows from bounded continuous functions being a subset of $E$.
\end{proof}

The two following results are essential for studying $\calA$-$p$ Young measures and are the last technical preliminaries we need. For proofs we refer to Theorem 4.1 in \cite{FonsecaMuller99}.
\begin{proposition}\label{propCharacterizationOfYM}
A probability measure $\mu \in \prob(\ccrNn)$ is a homogeneous $\calA$-free $\LL^p$ Young measure with mean $\xi_0$ if and only if $\mu$ satisfies $\intccrnn \xi \mathop{d\mu} = \xi_0$, $\int_{\ccrNn} \left|\xi\right|^p \mathop{d\mu}(\xi) < \infty$ and
$$ \int_{\ccrNn} g(\xi) \mathop{d\mu}(\xi) \geqslant \qa g(\xi_0) $$
for all $g \in E$.
\end{proposition}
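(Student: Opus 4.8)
The statement is an equivalence, so the plan is to prove the two implications separately, treating the forward (necessity) direction as a warm-up that also supplies the key computation needed for the converse.

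For necessity, suppose $\mu$ is a homogeneous $\calA$-free $\LL^p$ Young measure with mean $\xi_0$. The moment bound $\int_{\ccrnn}|\xi|^p\,\mathop{d\mu}<\infty$ follows from Theorem \ref{thmFToYM}(ii) applied to $f(\xi)=|\xi|^p$ and the uniform $\LL^p$-bound on the generators, while $\int_{\ccrnn}\xi\,\mathop{d\mu}=\xi_0$ holds because $\id\in E$ and the generators converge weakly in $\LL^p$ to $\langle\mu,\id\rangle$, which is their common mean $\xi_0$. For the Jensen inequality I would invoke localisation: after the equiintegrability reduction of Lemma \ref{lemmaEquintegrableGenerators}, Proposition \ref{propLocalisation} yields a $p$-equiintegrable family $\{u_j\}\subset\LL^p(\torusn)\cap\ker\calA$ generating $\mu$ with $\inttorusn u_j\,\dx=\xi_0$. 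Writing $u_j=\xi_0+w_j$, each $w_j:=u_j-\xi_0\in\LL^p(\torusn)\cap\ker\calA$ has zero mean and is therefore admissible in the definition of $\qa g(\xi_0)$, using that for a continuous $g$ of $p$-growth the class $C^\infty(\torusn)\cap\ker\calA$ may be replaced by $\LL^p(\torusn)\cap\ker\calA$. Since every $g\in E$ is continuous with $|g(\xi)|\leqslant C(1+|\xi|^p)$, the family $g(u_j)$ is equiintegrable, so Theorem \ref{thmFToYM}(iii) gives
$$\int_{\ccrnn} g\,\mathop{d\mu}=\lim_{j\to\infty}\inttorusn g(\xi_0+w_j)\,\dx\geqslant \qa g(\xi_0).$$

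For sufficiency I would argue by duality in the pairing $(E,E^*)$. Fix $\xi_0$ and let $\mathbb{H}^p_{\xi_0}$ be the set of homogeneous $\calA$-free $\LL^p$ Young measures with mean $\xi_0$; by Lemma \ref{lemmahpoclosed}, together with the weak* continuity of $\nu\mapsto\langle\nu,\id\rangle$, this set is weak* closed in $E^*$. The decisive computation is that
$$\inf_{\nu\in\mathbb{H}^p_{\xi_0}}\langle\nu,g\rangle=\qa g(\xi_0)\qquad\text{for every }g\in E.$$
The inequality $\geqslant$ is precisely the necessity direction applied to each such $\nu$. For $\leqslant$, given an admissible $w\in C^\infty(\torusn)\cap\ker\calA$ of zero mean, the oscillating sequence $x\mapsto(\xi_0+w)(jx)$ is $\calA$-free, converges weakly in $\LL^p$ to $\xi_0$, and by Lemma \ref{lemmaOscillationConv} generates $\overline{\delta_{\xi_0+w}}\in\mathbb{H}^p_{\xi_0}$ with $\langle\overline{\delta_{\xi_0+w}},g\rangle=\inttorusn g(\xi_0+w)\,\dx$; taking the infimum over $w$ gives $\leqslant\qa g(\xi_0)$. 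Now let $\mu$ satisfy the three listed conditions; the finite $p$-moment places $\mu$ in $E^*$. If $\mu\notin\mathbb{H}^p_{\xi_0}$, then since this set is weak* closed and convex the Hahn--Banach separation theorem in the weak* topology, whose continuous functionals are exactly the elements of $E$, produces $g\in E$ with $\langle\mu,g\rangle<\inf_{\nu\in\mathbb{H}^p_{\xi_0}}\langle\nu,g\rangle=\qa g(\xi_0)$, contradicting the hypothesis. Hence $\mu\in\mathbb{H}^p_{\xi_0}$.

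The hard part will be the convexity of $\mathbb{H}^p_{\xi_0}$, which the separation step requires in addition to the weak*-closedness furnished by Lemma \ref{lemmahpoclosed}. Unlike closedness this is not a soft consequence of the functional-analytic framework: to realise a convex combination $\lambda\nu^1+(1-\lambda)\nu^2$ of two homogeneous $\calA$-free Young measures of mean $\xi_0$ as a single such measure, one must glue their generating sequences on a partition of the torus into sets of measures $\lambda$ and $1-\lambda$, correcting the interface mismatch so as to preserve both the constraint $\calA(\cdot)=0$ and the mean. This lamination is where the constant-rank hypothesis enters, via the Helmholtz-type projection discussed in the Appendix, and it is the genuinely technical core of the argument; the remainder is bookkeeping around Theorem \ref{thmFToYM} and the definition of $\qa$.
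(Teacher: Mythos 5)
Your proposal is correct and follows essentially the same route as the proof the paper relies on: the paper does not prove Proposition \ref{propCharacterizationOfYM} itself but refers to \cite{FonsecaMuller99} (Proposition 4.3 and Theorem 4.1 there), where necessity is obtained exactly as you do from Lemma \ref{lemmaEquintegrableGenerators} and Proposition \ref{propLocalisation}, and sufficiency from the identity $\inf_{\nu \in \mathbb{H}^p_{\xi_0}} \langle \nu, g \rangle = \qa g(\xi_0)$ combined with weak$^*$ closedness, convexity of $\mathbb{H}^p_{\xi_0}$, and Hahn--Banach separation in $(E^*, \mathrm{weak}^*)$. You also correctly single out the convexity of $\mathbb{H}^p_{\xi_0}$ as the one genuinely technical ingredient; in \cite{FonsecaMuller99} it is established by precisely the construction you sketch (gluing generators on a partition of the torus, restoring $\calA$-freeness via the constant-rank Fourier projection, and rescaling periodically to homogenise), so nothing in your outline would fail.
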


A similar result holds for non-homogeneous Young measures. We state it in the following:
\begin{proposition}\label{propCharacterisationYMnonhom}
Fix $1 < p < \infty$ and let $\{ \nu_x\}_{x \in \Omega}$ be a weak* measurable family of probability measures on $\ccrd$. Then there exists a $p$-equiintegrable sequence $\{v_j\} \subset \LL^p(\Omega; \ccrd)$ generating the Young measure $\nu$ and satisfying $\calA v_j = 0$ if and only if the following conditions hold:

i) there exists $v \in \LL^p(\Omega; \ccrd) \cap \ker \calA$ such that
$$ v(x) = \langle \nu_x, \mathop{id} \rangle \text{ for a.e. } x \in \Omega;$$

ii) 
$$ \int_{\Omega} \intccrd |z|^p \dnux(z) \dx < \infty;$$

iii) for a.e. $x \in \Omega$ and all continuous functions $g$ satisfying $|g(v)| \leqslant C(1 + |v|^p)$ for some positive constant $C$ one has
$$ \langle \nu_x, g \rangle \geqslant \qa g( \langle \nu_x, \mathop{id} \rangle) \ldotp$$
\end{proposition}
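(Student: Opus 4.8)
The plan is to treat the two implications separately: necessity will rest on the localisation result \ref{propLocalisation} together with the very definition of $\qa$, while sufficiency will require assembling local generators by a patching construction, which is where the real work lies.

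For necessity, suppose a $p$-equiintegrable $\calA$-free sequence $\{v_j\}$ generates $\nu$. It is bounded in $\LL^p$, so after passing to a subsequence $v_j \weakConv v$ for some $v \in \LL^p(\Omega;\ccrd)$. Since $\calA$ is a continuous linear operator $\LL^p \to \Wm1p$, hence weak-to-weak continuous, $\calA v = \lim_j \calA v_j = 0$, so $v \in \ker\calA$. The equiintegrability of $\{v_j\}$ identifies the weak $\LL^1$-limit of the generating sequence with the barycentre $x \mapsto \langle \nu_x, \id\rangle$, which together with uniqueness of weak limits gives (i); applying Theorem \ref{thmFToYM}(iii) to the Carath\'{e}odory integrand $z \mapsto |z|^p$ (bounded below, with $p$-equiintegrable composition) yields $|v_j|^p \weakConv x \mapsto \intccrd |z|^p \dnux(z)$ in $\LL^1$ and hence (ii). For (iii) I would invoke Proposition \ref{propLocalisation}: for a.e. $a$ there is a $p$-equiintegrable family $\{u_j\}\subset \LL^p(\torusN)\cap\ker\calA$ generating the homogeneous measure $\nu_a$ with $\int_{\torusN} u_j \dx = v(a)$. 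Writing $u_j = v(a) + (u_j - v(a))$, the mean-zero $\calA$-free fields $u_j - v(a)$ are admissible competitors in the infimum defining $\qa g(v(a))$ (as $g$ is continuous, $\LL^p$ competitors are allowed), so $\int_{\torusN} g(u_j) \dx \geqslant \qa g(v(a))$; since $p$-growth of $g$ makes $\{g(u_j)\}$ equiintegrable, Theorem \ref{thmFToYM}(iii) permits passing to the limit to obtain $\langle \nu_a, g\rangle \geqslant \qa g(\langle \nu_a,\id\rangle)$, which is (iii).

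For sufficiency, the first step is to verify, using (ii) and Fubini to secure a finite $p$-th moment for a.e.\ $x$, and (iii) restricted to $g \in E$ (note $E$ consists of continuous functions of $p$-growth, so (iii) applies), that the hypotheses of Proposition \ref{propCharacterizationOfYM} hold; hence for a.e.\ $x$ the measure $\nu_x$ is a homogeneous $\calA$-$p$ Young measure with mean $v(x)$, generated by a periodic $\calA$-free field oscillating around $v(x)$. It then remains to assemble these local generators into one sequence generating the non-homogeneous family. I would partition $\Omega$ into small cubes, use weak* measurability of $x \mapsto \nu_x$ and Lusin's theorem to replace $\nu$, off a set of small measure, by a family equal on each cube to a single homogeneous $\calA$-$p$ Young measure $\mu_k$, and on each cube superimpose a fast periodic oscillation generating $\mu_k$ (via Lemma \ref{lemmaOscillationConv}) onto the slowly varying mean field $v$, tracking the effect on the generated measure with Proposition \ref{propTranslatedYM}. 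Finally I would diagonalise over the partition scale, the oscillation frequency, and the Lusin approximation, extracting a sequence whose $\calA$-error tends to $0$ in $\Wm1p$, and then apply Lemma \ref{lemmaEquintegrableGenerators} to replace it by a genuinely $p$-equiintegrable, exactly $\calA$-free sequence generating the same Young measure.

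The main obstacle is precisely this sufficiency construction. Superimposing independent periodic oscillations cube-by-cube destroys the constraint $\calA v_j = 0$ across cube interfaces and through the cutoffs, so one must control the resulting $\Wm1p$-error and project it away (through the Helmholtz-type decomposition underlying Lemma \ref{lemmaEquintegrableGenerators}) without perturbing the generated Young measure; at the same time one must ensure the local oscillations do not interact and that the Lusin exceptional set contributes negligibly, which is what forces the careful triple diagonalisation. Checking that the limit object genuinely generates $\nu$, rather than some averaged or relaxed measure, is the delicate point of the whole argument.
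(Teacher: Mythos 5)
The paper itself gives no proof of this proposition: it is imported verbatim from Fonseca--M\"{u}ller (Propositions 4.3/4.4 and Theorem 4.1 of \cite{FonsecaMuller99}), so the only available comparison is against that argument. Your outline --- necessity via Proposition \ref{propLocalisation} combined with the remark that for continuous integrands the infimum defining $\qa$ may be taken over mean-zero $\calA$-free $\LL^p$ fields, and sufficiency by reducing to Proposition \ref{propCharacterizationOfYM} pointwise, patching periodic generators cube-by-cube, and restoring exact $\calA$-freeness and $p$-equiintegrability through Lemma \ref{lemmaEquintegrableGenerators} after a diagonalisation --- is essentially that same proof, and the technical points you flag as the main obstacles (controlling the $\Wm1p$ error of the patched field, which in \cite{FonsecaMuller99} comes from the compactness of the embedding of $\LL^p$ into $\Wm1p$ applied to the weakly null mean-zero oscillations, and verifying the glued sequence generates $\nu$) are precisely the estimates carried out there.
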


\section{Closed $\calA$-$p$ quasiconvexity and lower semi-continuity}
Having at our disposal the relevant theory of $\calA$-free Young measures we are ready to state and prove our main result, which identifies the lower semi-continuous envelope of relaxation in a variational problem, where the lower semi-continuity is sequential, with respect to vector fields converging weakly in $\LL^p$ and strongly in $\Wm1p$ when the differential operator $\calA$ is applied. Such a result has already been proven in the special case of gradient (curl-free) vector fields in \cite{Kristensen15}, and we make extensive use of techniques present therein, adapting them as necessary to the case of a general operator $\calA$, similarly to \cite{FonsecaMuller99}. Here and in all that follows $p \in (1,\infty)$ is a fixed exponent. We begin with the necessary definitions.
 
\begin{defi}
We say that a function $f$ is closed $\calA$-$p$ quasiconvex if $f$ is lower semi-continuous and Jensen's inequality holds for $f$ and every homogeneous $\calA$-$p$ Young measure, i.e.
$$f(\xi) \leqslant \intccrd f(z) \mathop{d\nu(z)}$$
for every homogeneous $\calA$-$p$ Young measure $\nu$ with center of mass $\xi$. 
\end{defi}
\begin{defi}\label{defiQCenvelope}
For a measurable function $F$ we define its closed $\calA$-$p$ quasiconvex envelope by
$$ \overlineF(\xi) := \sup\{G(\xi) \colon G \leqslant F, \, G \text{ is closed } \calA -p \text{ quasiconvex} \} \ldotp$$
\end{defi}

\subsection{Sufficiency}
In all that follows we are interested in sequential lower semi-continuity of the functional
$$V \mapsto \II_F[V] := \intOmega F(V(x)) \dx,$$
with respect to vector fields $V_j \weakConv V$ weakly in $\LL^p$ and $\calA V_j \rightarrow \calA V$ strongly in $\Wm1p(\Omega)$. 
Our first result proves sufficiency of the closed $\calA$-$p$ quasiconvexity for the lower semi-continuity. 
\begin{theorem}\label{thmQCimpliesLSC}
Assume that $F$ is closed $\calA$-$p$ quasiconvex. Then the functional $\II_F$ is lower semi-continuous in the sense precised above. 
\end{theorem}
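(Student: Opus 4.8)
The plan is to argue via Young measures, reducing the lower-semicontinuity inequality to a pointwise application of the Jensen inequality that is built into the definition of closed $\calA$-$p$ quasiconvexity. Let $V_j \weakConv V$ in $\LL^p$ with $\calA V_j \to \calA V$ in $\Wm1p(\Omega)$; I want $\liminf_j \II_F[V_j]\geq\II_F[V]$. Passing first to a subsequence realising the $\liminf$ and then, by Theorem \ref{thmFToYM}, to a further subsequence generating a Young measure $\{\nu_x\}_{x\in\Omega}$, I use that $F$ is lower semi-continuous (this is part of being closed $\calA$-$p$ quasiconvex) and non-negative, hence a normal integrand bounded from below. Theorem \ref{thmFToYM}(ii) then gives $\liminf_j \II_F[V_j]\geq\intOmega\langle\nu_x,F\rangle\dx$. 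It therefore suffices to prove the pointwise bound $\langle\nu_x,F\rangle\geq F(V(x))$ for a.e. $x$, which will follow from the definition of closed $\calA$-$p$ quasiconvexity as soon as I show that each $\nu_x$ is a homogeneous $\calA$-$p$ Young measure with centre of mass $V(x)$.

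The difficulty is that the hypotheses only give $\calA V_j\to\calA V$, not $\calA V_j\to 0$, so $\{V_j\}$ is not directly amenable to the localisation machinery. To remedy this I centre the sequence: set $W_j := V_j - V$, so that $W_j\weakConv 0$ in $\LL^p$ and $\calA W_j = \calA V_j - \calA V\to 0$ in $\Wm1p$. Since $\{V_j\}$ is bounded in $\LL^p$ with $p>1$ it is $p$-equiintegrable, so the barycentre of $\nu$ equals the weak limit, $\langle\nu_x,\id\rangle = V(x)$ a.e.; by Proposition \ref{propTranslatedYM} the shifted sequence $W_j$ generates the centred measure $\mu_x:=\delta_{-V(x)}\ast\nu_x$, whose barycentre is $0$. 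Now $\{W_j\}$ satisfies exactly the hypotheses of Lemma \ref{lemmaEquintegrableGenerators}, which furnishes a $p$-equiintegrable sequence $\{w_j\}\subset\LL^p(\Omega;\ccrd)\cap\ker\calA$ generating the same $\mu$ and with $w_j\weakConv 0$.

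With $\{w_j\}$ in hand I apply Proposition \ref{propLocalisation}: for a.e. $a\in\Omega$ there is a $p$-equiintegrable family $\{u_j\}\subset\LL^p(\torusN;\ccrd)\cap\ker\calA$ generating the homogeneous measure $\mu_a$ and satisfying $\int_{\torusN}u_j\dx=\langle\mu_a,\id\rangle=0$. Thus $\mu_a$ is a homogeneous $\calA$-$p$ Young measure with mean $0$, i.e. $\mu_a\in\hpo$. Translating back by the constant vector $V(a)$ — which is admissible because constant fields lie in $\ker\calA$ — the sequence $u_j+V(a)$ is $\calA$-free, $p$-equiintegrable, and generates $\delta_{V(a)}\ast\mu_a=\nu_a$ with $\int_{\torusN}(u_j+V(a))\dx = V(a)$. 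Hence $\nu_a$ is a homogeneous $\calA$-$p$ Young measure with centre of mass $V(a)$, as required.

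Feeding $\nu_a$ into the definition of closed $\calA$-$p$ quasiconvexity gives $F(V(a))\leq\intccrd F(z)\,d\nu_a(z)=\langle\nu_a,F\rangle$ for a.e. $a$ (the inequality being vacuous where the right-hand side is $+\infty$, which is legitimate as $F\geq 0$). Integrating over $\Omega$ and combining with the Young-measure bound from Theorem \ref{thmFToYM}(ii) yields $\liminf_j\II_F[V_j]\geq\intOmega\langle\nu_x,F\rangle\dx\geq\intOmega F(V(x))\dx=\II_F[V]$, which is the claim. I expect the main obstacle to be precisely the chain of reductions identifying the localised measures $\nu_a$ as genuine homogeneous $\calA$-$p$ Young measures centred at $V(a)$: one must cancel the nonzero drift $\calA V$ by centring, restore the $p$-equiintegrability needed for localisation, and then carefully track the centre of mass through the translations of Proposition \ref{propTranslatedYM}, exploiting at the last step that constant fields are $\calA$-free.
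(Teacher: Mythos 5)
Your proof is correct and follows essentially the same route as the paper's: reduction to a pointwise Jensen inequality via Theorem \ref{thmFToYM}, identification of the barycentre $\langle\nu_x,\id\rangle=V(x)$, centring the sequence as $V_j-V$ so that the translated measure $\delta_{-V(x)}\ast\nu_x$ is an $\calA$-$p$ Young measure, localisation via Proposition \ref{propLocalisation}, and translation back by the ($\calA$-free) constant $V(a)$. Two minor remarks: your explicit appeal to Lemma \ref{lemmaEquintegrableGenerators} before localising is a welcome extra precision (Proposition \ref{propLocalisation} formally requires $p$-equiintegrability, a step the paper glosses over), while boundedness in $\LL^p$ with $p>1$ gives $\LL^1$-equiintegrability rather than $p$-equiintegrability --- which is all you actually need for the barycentre identification.
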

\begin{proof}
Fix an arbitrary sequence $\{V_j\} \subset \LL^p$ satisfying $ V_j \weakConv V$ in $\LL^p$ and $\calA V_j \rightarrow \calA V$ in $\Wm1p$. We wish to show that $\liminf_j \int_{\Omega} F(V_j) \dx \geqslant \intOmega F(V) \dx$. Without loss of generality we may assume that $\lim_j \int_{\Omega} F(V_j) \dx = \liminf_j \int_{\Omega} F(V_j) \dx$ and that $\{V_j\}$ generates some Young measure $\nu = \{ \nu_x \}_x$. If $\lim_j \int_{\Omega} F(V_j) \dx = \infty$ then there is nothing to prove, so assume $ 0 \leqslant \lim_j \int_{\Omega} F(V_j) \dx  < \infty$.
Since $F$ is a normal integrand we may use Theorem \ref{thmFToYM} to get
$$ \lim_j \int_{\Omega} F(V_j) \dx \geqslant \int_{\Omega} \int_{\mathbb{R}^{d}} F(\xi) \dnux(\xi) \dx,$$
hence it is enough to show
\begin{equation}\label{eqBeforeJensenForF} 
\int_{\mathbb{R}^{d}} F(\xi)  \dnux(\xi) \geqslant F(V(x))
\end{equation}
for almost all $x \in \Omega$. 
First observe that since $\{V_j\}$ is bounded in $\LL^p$ it is $\LL^1$-equiintegrable. Therefore, by Theorem \ref{thmFToYM}, we have
$$ V_j \weakConv \int_{\mathbb{R}^{d}} \xi \dnux(\xi) \quad \text{in } \LL^1\ldotp$$
By assumption we also have $V_j(x) \weakConv V(x)$ in $\LL^p$, so we must have, for almost every $x \in \Omega$ that $ V(x)  = \int_{\mathbb{R}^{d}} \xi \dnux(\xi) \ldotp$ Now the inequality \eqref{eqBeforeJensenForF} becomes
\begin{equation}\label{eqJensenForF}
\int_{\mathbb{R}^{d}} F(\xi)  \dnux(\xi) \dx \geqslant F( \int_{\mathbb{R}^{d}} \xi \dnux(\xi) ),
\end{equation}
and so is just Jensen's inequality for $F$ and $\nu_x$. Therefore it is enough to show that $\nu_x$ is an $\calA$-$p$ homogeneous Young measure for almost every $x \in \Omega$. 
First observe that the measure $\widetilde{\nu} := \{ \nu_x \ast \delta_{-V(x)} \}_x$ is generated by the sequence $V_j - V$, which is weakly convergent in $\LL^p$ and satisfies $\calA(V_j - V) \rightarrow 0$ in $\Wm1p$, so it is an $\calA$-$p$ Young measure. Now it is enough to apply Proposition \ref{propLocalisation} to deduce that for almost every $x$ the measure $\nu_x \ast \delta_{-V(x)}$ is an $\calA$-$p$ homogeneous Young measure, hence so is $\nu_x$ (as it is enough to add a constant to the sequence generating $\nu_x \ast \delta_{-V(x)}$ to generate $\nu_x$) and the proof is complete.
\end{proof}

\subsection{Quasiconvex envelope}
Before we move on to the (more difficult) results regarding necessity of closed $\calA$-$p$ quasiconvexity we need the following representation of a quasiconvex envelope of a function, similar to $\qa g$ introduced before, but without the upper growth bounds. An essential tool of this subsection is the classical measurable selection theorem due to Kuratowski and Ryll-Nardzewski (see \cite{KuratowskiRyllNardzewski65}), which follows. The standing assumption here and in all that follows is that $F \colon \ccrn \rightarrow [0, \infty]$ satisfies $F(\xi) \geqslant C|\xi|^p - C^{-1}$ for some constant $C > 0$ for all $\xi \in \ccrn$. In fact, since we only care about lower semi-continuity of the functional $\II_F$ we may without loss of generality assume that $F(\xi) \geqslant C|\xi|^p$, as adding the constant $C^{-1}$ to $F$ does not change the continuity properties of the functional.  

\begin{theorem}\label{thmKuratowski}
Let $X$ be a metric space and $Y$ be a separable and complete metric space. Fix a multi-valued function $G \colon X \rightarrow 2^Y$. If for any closed set $K \subset Y$ the set $\{ x \in X \colon F(x) \cap K \not= \emptyset \}$ is measurable then $G$ admits a measurable selector, i.e. there exists a measurable function $g \colon X \rightarrow Y$ such that for all $x \in X$ we have $g(x) \in G(x)$. 
\end{theorem}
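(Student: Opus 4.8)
The plan is to realise a measurable selector as the pointwise limit of a sequence of measurable, countably-valued approximate selectors, following the classical scheme. Throughout I work under the standing hypotheses that each value $G(x)$ is nonempty and closed --- these are implicit in the statement, being necessary for a selector to exist and to lie in $G(x)$ --- and I make free use of the completeness of $Y$. First I would record a convenient reformulation of the measurability hypothesis: since in a metric space every open set is an $F_\sigma$, say $U = \bigcup_k K_k$ with each $K_k$ closed, we have $\{x : G(x) \cap U \neq \emptyset\} = \bigcup_k \{x : G(x) \cap K_k \neq \emptyset\}$, so the assumed measurability for closed sets yields measurability of $\{x : G(x) \cap U \neq \emptyset\}$ for every open $U$ as well. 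Fix once and for all a countable dense subset $\{y_n\}_{n \in \mathbb{N}}$ of $Y$, which exists by separability.

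I would then construct measurable maps $g_k \colon X \to Y$, each taking values in $\{y_n\}$, satisfying the two invariants
$$ \operatorname{dist}(g_k(x), G(x)) < 2^{-k} \quad \text{and} \quad d(g_{k+1}(x), g_k(x)) < 2^{-k+1} \quad \text{for all } x \in X. $$
For the base step, the open balls $B(y_n, 1)$ have the property that every $G(x)$ meets at least one of them (take any point of $G(x)$ and a nearby $y_n$); the sets $C_n = \{x : G(x) \cap B(y_n,1) \neq \emptyset\}$ are measurable by the reformulation above and cover $X$, so disjointifying them into $D_n = C_n \setminus \bigcup_{m<n} C_m$ and setting $g_0(x) := y_n$ on $D_n$ gives a measurable $g_0$ with the first invariant at level $0$. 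For the inductive step, given $g_k$ one checks that for each $x$ some $y_m$ satisfies simultaneously $G(x) \cap B(y_m, 2^{-(k+1)}) \neq \emptyset$ and $d(y_m, g_k(x)) < 2^{-k+1}$: indeed pick $p \in G(x)$ with $d(g_k(x), p) < 2^{-k}$ and then $y_m$ within $2^{-(k+1)}$ of $p$, so that $d(y_m, g_k(x)) \leqslant 2^{-(k+1)} + 2^{-k} < 2^{-k+1}$. The sets of $x$ for which a fixed $y_m$ works are measurable --- the first condition by the reformulation, the second because $x \mapsto d(y_m, g_k(x))$ is measurable --- and they cover $X$; disjointifying and choosing $g_{k+1}(x) = y_m$ on the corresponding piece yields the next approximation with both invariants.

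Finally I would pass to the limit. The second invariant makes $\{g_k(x)\}_k$ uniformly Cauchy (the bounds $2^{-k+1}$ are summable), so by completeness $g(x) := \lim_k g_k(x)$ exists for every $x$; as a pointwise limit of measurable maps into the metric space $Y$ it is measurable. Since $y \mapsto \operatorname{dist}(y, G(x))$ is $1$-Lipschitz, the first invariant gives $\operatorname{dist}(g(x), G(x)) = \lim_k \operatorname{dist}(g_k(x), G(x)) = 0$, and $G(x)$ being closed forces $g(x) \in G(x)$; thus $g$ is the desired measurable selector. I expect the only genuinely delicate point to be the inductive construction itself: one must keep the two invariants compatible so that the approximations both improve geometrically and stay close enough to converge, while verifying at each stage that the countable cover used to define $g_{k+1}$ consists of measurable sets --- which is exactly where the hypothesis on $G$ (in its open-set form) together with the measurability of $x \mapsto d(y_m, g_k(x))$ enters.
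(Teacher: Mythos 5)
Your proof is correct, but there is nothing in the paper to compare it against: the paper states Theorem \ref{thmKuratowski} as a classical result and cites the original article of Kuratowski and Ryll-Nardzewski for it, offering no proof of its own. The argument you give --- measurable, countably-valued approximate selectors built from a dense sequence $\{y_n\}$, with the two invariants $\dist(g_k(x),G(x)) < 2^{-k}$ and $d(g_{k+1}(x),g_k(x)) < 2^{-k+1}$, followed by passage to the uniform limit --- is precisely the standard (indeed essentially the original) proof of the selection theorem, and all the steps check out: the $F_\sigma$ reformulation turning closed-set measurability into open-set measurability, the measurability of each piece in the disjointified covers, and the use of completeness plus closedness of the values at the end. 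You were also right to flag the implicit standing hypotheses: the theorem as stated in the paper is slightly loose (the values $G(x)$ must be nonempty and closed, and the displayed condition has a typo, $F(x)$ in place of $G(x)$), and your proof makes exactly the assumptions under which the cited theorem actually holds.
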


Our goal is the following:
\begin{proposition}\label{lemmaQCenvelope}
The closed $\calA$-p quasiconvex envelope of a lower semi-continuous function $F$ satisfying the growth condition $F(\xi) \geqslant c|\xi|^p$ is given by
$$ \overlineF (\xi) = \inf_{\nu \in \hpo} \langle F(\cdot + \xi), \nu \rangle = \inf_{\nu \in \mathbb{H}^p_{\xi}} \langle F, \nu \rangle \ldotp
$$
Moreover, the function $\overline{F}$ is indeed closed $\calA$-$p$ quasiconvex. 
\end{proposition}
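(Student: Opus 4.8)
The plan is to show that the function $g(\xi):=\inf_{\nu\in\mathbb{H}^p_{\xi}}\langle F,\nu\rangle$ is itself closed $\calA$-$p$ quasiconvex and coincides with $\overline{F}$; the first equality in the statement is then a triviality. Translating a measure $\nu\in\hpo$ by $\xi$ produces, via Proposition \ref{propTranslatedYM} (adding the constant $\xi$ to a generating $\calA$-free sequence leaves it $\calA$-free and weakly convergent), a measure $\nu\ast\delta_{\xi}\in\mathbb{H}^p_{\xi}$, with $\langle F(\cdot+\xi),\nu\rangle=\langle F,\nu\ast\delta_{\xi}\rangle$; since $\nu\mapsto\nu\ast\delta_{\xi}$ is a bijection $\hpo\to\mathbb{H}^p_{\xi}$ the two infima agree. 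Granting that $g$ is closed $\calA$-$p$ quasiconvex, the identity $g=\overline{F}$ and the final ``moreover'' follow at once: testing against the Dirac mass $\delta_{\xi}\in\mathbb{H}^p_{\xi}$ gives $g\leqslant F$, so $g$ is an admissible competitor in Definition \ref{defiQCenvelope} and hence $g\leqslant\overline{F}$; conversely any closed $\calA$-$p$ quasiconvex $G\leqslant F$ satisfies $G(\xi)\leqslant\langle G,\nu\rangle\leqslant\langle F,\nu\rangle$ for every $\nu\in\mathbb{H}^p_{\xi}$ by its own Jensen inequality, whence $G\leqslant g$ and therefore $\overline{F}\leqslant g$.

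First I would establish lower semi-continuity of $g$. Given $\xi_k\to\xi$ with $\liminf_k g(\xi_k)=L<\infty$, choose near-optimal $\nu_k\in\mathbb{H}^p_{\xi_k}$ with $\langle F,\nu_k\rangle\leqslant g(\xi_k)+1/k$. The coercivity $F(\xi)\geqslant c|\xi|^p$ forces $\langle\nu_k,|\cdot|^p\rangle\leqslant(g(\xi_k)+1/k)/c$, so $\|\nu_k\|_{E^*}=\langle\nu_k,1+|\cdot|^p\rangle$ is bounded; since $E$ is separable, a subsequence converges weak$^*$ to some $\lambda\in E^*$. Because each component of the identity lies in $E$ (this uses $p>1$), the mean is weak$^*$-continuous, and combining this with the weak$^*$-closedness of $\hpo$ (Lemma \ref{lemmahpoclosed}, applied after translating the $\nu_k$ by $-\xi_k$) yields $\lambda\in\mathbb{H}^p_{\xi}$. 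Portmanteau's theorem (Lemma \ref{lemmaPortmanteau}) applied to the lower semi-continuous, bounded-below integrand $F$ then gives $g(\xi)\leqslant\langle F,\lambda\rangle\leqslant\liminf_k\langle F,\nu_k\rangle\leqslant L$, as required.

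The substance of the proof is Jensen's inequality $g(\xi)\leqslant\langle g,\mu\rangle$ for every $\mu\in\mathbb{H}^p_{\xi}$, where I may assume $\langle g,\mu\rangle<\infty$. Fixing $\epsilon>0$, I would define the multifunction $\Phi(\eta):=\{\nu\in\mathbb{H}^p_{\eta}:\langle F,\nu\rangle\leqslant g(\eta)+\epsilon\}$ on the $\mu$-full Borel set $\{g<\infty\}$, viewed inside the complete separable space $\prob(S^{d})$ of probability measures on the one-point compactification; the coercivity of $F$ guarantees that competitors place no mass at $\infty$, so each $\Phi(\eta)$ is a nonempty closed subset. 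The Kuratowski--Ryll-Nardzewski theorem (Theorem \ref{thmKuratowski}) then supplies a measurable selector $\eta\mapsto\nu_{\eta}$, whose existence rests on verifying the measurability hypothesis, for which the measurability of the lower semi-continuous $g$ and the continuous dependence of the mean constraint $\langle\nu,\id\rangle=\eta$ on $\eta$ are the key inputs. Averaging, I set $\lambda:=\int\nu_{\eta}\,d\mu(\eta)$, i.e. $\langle\lambda,h\rangle=\int\langle\nu_{\eta},h\rangle\,d\mu(\eta)$, and verify the three conditions of Proposition \ref{propCharacterizationOfYM}: the mean is $\int\eta\,d\mu(\eta)=\xi$; the $p$-th moment is controlled by $\tfrac1c\langle g,\mu\rangle<\infty$; and for $h\in E$ one has $\langle\lambda,h\rangle\geqslant\int Q_{\calA}h(\eta)\,d\mu(\eta)\geqslant Q_{\calA}h(\xi)$, the last step being Jensen for $\mu$ against the $\calA$-$p$ quasiconvex function $Q_{\calA}h$, legitimate because $Q_{\calA}(Q_{\calA}h)=Q_{\calA}h$ by Lemma \ref{lemmaqag}. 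Hence $\lambda\in\mathbb{H}^p_{\xi}$, and by Tonelli $g(\xi)\leqslant\langle F,\lambda\rangle=\int\langle F,\nu_{\eta}\rangle\,d\mu(\eta)\leqslant\langle g,\mu\rangle+\epsilon$; letting $\epsilon\downarrow0$ closes the argument.

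I expect the main obstacle to be this measurable selection step: setting up $\Phi$ in a fixed compact metric space of measures, proving its values are nonempty and closed, and especially verifying the Kuratowski--Ryll-Nardzewski measurability condition uniformly in $\eta$ despite the $\eta$-dependence of the bound $g(\eta)+\epsilon$. A secondary technical point is reconciling the growth of $Q_{\calA}h$ with the hypotheses of Proposition \ref{propCharacterizationOfYM} so that the averaged measure $\lambda$ genuinely satisfies the quasiconvexity inequality against all $h\in E$.
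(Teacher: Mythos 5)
Your overall strategy is the same as the paper's: show that the inf-formula defines a closed $\calA$-$p$ quasiconvex function (lower semi-continuity by extracting weak* limits of near-optimal measures; Jensen's inequality by a Kuratowski--Ryll-Nardzewski selection, averaging, and verification through Proposition \ref{propCharacterizationOfYM} and Lemma \ref{lemmaqag}), after which the envelope identity is formal. Your lower semi-continuity step, which works in the weak* topology of $E^*$, and your final averaging computation are essentially the paper's. The genuine gap is in the selection step, in the choice of the ambient Polish space. You take $Y = \prob(S^{d})$ with the narrow topology and assert that each $\Phi(\eta)$ is closed there because coercivity prevents mass from reaching $\infty$. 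But absence of mass at infinity is not the issue: what must be closed is membership in $\mathbb{H}^p_{\eta}$, and the only closedness result available, Lemma \ref{lemmahpoclosed}, is closedness of $\hpo$ in the weak* topology of $E^*$, which on moment-bounded sets is \emph{strictly finer} than the narrow topology. Concretely, with $|z_j|^p = j$ the measures $\nu_j := (1-1/j)\rho + (1/j)\delta_{z_j}$ have uniformly bounded $p$-th moments and converge narrowly to $\rho$, yet do not converge in $E^*$-weak* (test against $|\cdot|^p \in E$). For the same reason the characterising inequalities $\langle \nu, h \rangle \geqslant \qa h(\eta)$ for $h \in E$ of critical growth $p$ do not pass to narrow limits: the uniform bound $\langle F, \nu_j\rangle \leqslant g(\eta)+\epsilon$ gives bounded $p$-th moments but not their uniform integrability, so $\liminf_j \langle \nu_j, h\rangle$ can strictly exceed $\langle \nu, h \rangle$. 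Hence ``each $\Phi(\eta)$ is a nonempty closed subset'' is unjustified as stated, and the same coarseness problem re-surfaces when you try to verify the KRN measurability hypothesis for narrowly closed $K$ (your near-minimisers only converge narrowly, so their limit is again not known to lie in $\mathbb{H}^p_{\eta}$).

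The paper's proof is built precisely to avoid this. It works with translated, mean-zero measures, so that membership in $\hpo$ does not depend on the base point; it stratifies the domain into $\Omega_M = \{ |\xi| < M, \, R(\xi) \leqslant M \}$ so that all competitors obey a \emph{uniform} moment bound $C_M$; and it takes as target space $Y_M = \{ \mu \in \hpo \colon \int |z|^p \mathop{d\mu} \leqslant C_M \}$ equipped with the $E^*$-weak* topology, which is compact and metrisable (Banach--Alaoglu plus separability of $E$), hence complete and separable. In that topology closedness of the values follows from Lemma \ref{lemmahpoclosed} together with Lemma \ref{lemmaPortmanteau}, and measurability is checked by proving that $\xi \mapsto \inf_{\mu \in O} \int F(\cdot + \xi)\mathop{d\mu}$ is lower semi-continuous using weak* compactness of $O \subset Y_M$. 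Your argument can be repaired either by adopting this setup (note that your $\eta$-dependent bound $g(\eta) + \epsilon$ is exactly why the stratification by $M$ is needed), or by proving separately that moment-bounded subsets of $\mathbb{H}^p_{\eta}$ are narrowly closed --- this is in fact true, but requires a diagonalisation over generating sequences, an additional nontrivial lemma that neither you nor the paper provides. As written, the selection step does not go through.
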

\begin{proof}
Denote 
$$ R(\xi) := \inf_{\nu \in \hpo} \langle F(\cdot + \xi), \nu \rangle \ldotp$$
Clearly for any $\nu \in \hpo$ and $\xi \in \ccrNn$ we have $ \overlineF(\xi) \leqslant \langle F(\cdot + \xi), \nu \rangle,$ therefore taking the infimum over $\nu \in \hpo$ yields
$$ \overlineF(\xi) \leqslant R(\xi),$$
hence showing that $R$ is closed $\calA-p$ quasiconvex will give the opposite inequality and end the proof. 

To show that $R$ is lower semi-continuous fix $\xi_0 \in \ccrNn$ and a sequence $\xi_j \rightarrow \xi_0$ and an $\varepsilon > 0$. We will show that 
$$ \varepsilon + \liminf R(\xi_j) \geqslant R(\xi_0) \ldotp$$
Without loss of generality assume that $ \lim R(\xi_j) = \liminf R(\xi_j) < \infty,$
and let $M$ be such that $R(\xi_j) + \varepsilon \leqslant M$ for all $j$. By definition of $R$ for each $\xi_j$ there exists $\nu_j \in \hpo$ with
$$ M \geqslant R(\xi_j) + \varepsilon \geqslant \langle F(\cdot + \xi_j), \nu_j \rangle \ldotp$$
Our growth assumption on $F$ and boundedness of $\left|\xi_j\right|$ (as a convergent sequence) then imply
$$ M \geqslant \intccrnn c \left| \xi + \xi_j \right|^p \mathop{d \nu_j} \geqslant C \left( \intccrnn \left|\xi\right|^p \mathop{d\nu_j} -1 \right),$$
which yields $ \sup_j \intccrnn \left|\xi\right|^p \mathop{d\nu_j} < \infty \ldotp$
We see that the family $\{ \nu_j \}$ is bounded in $E^*$, therefore we may extract a weakly*-convergent subsequence from it - without loss of generality assume that the whole sequence converges, i.e. $\nu_j \weakstarconv \nu_0$. By Lemma \ref{lemmahpoclosed} we have $\nu_0 \in \hpo$. Moreover
$ \translation_{\xi_j} \ast \nu_j \weaksconv \translation_{\xi_0} \ast \nu_0 \ldotp$
Since $F$ is lower semi-continuous and bounded from below we have
$$ \varepsilon + \liminf R(\xi_j) \geqslant \liminf \langle F, \translation_{\xi_j} \ast \nu_j \rangle \geqslant \langle F, \translation_{\xi_0} \ast \nu_0 \rangle = $$
$$ = \intccrnn F(\cdot + \xi_0) \mathop{d\nu_0} \geqslant R(\xi_0),$$
where the last inequality comes from the definition of $R$ and the fact that $\nu \in \hpo$. Since $\varepsilon > 0$ was arbitrary we conclude that $R$ is in fact lower semi-continuous. 

It now remains to show that $R$ satisfies the Jensen's inequality with respect to $\calA$-free measures. To that end fix $\xi_0 \in \ccrnn$ and $\nu \in \mathbb{H}^p_{\xi_0}$. We wish to show that
$R(\xi_0) \leqslant \intccrnn R \dnu \ldotp$
Observe that we may assume without loss of generality that $\intccrnn R \dnu < \infty$, as the case where this integral is infinite is trivial. Let us fix an $\varepsilon > 0$ and observe that, by definition of $R$, for all $\xi \in \ccrnn$ there exists $\nuxi \in \hpo$ satisfying
$$ \langle F(\cdot + \xi), \nuxi \rangle \leqslant \varepsilon + R(\xi),$$
so that
$$\intccrnn \left( \intccrnn F(\cdot + \xi) \mathop{d\nuxi} \right) \dnu(\xi) \leqslant \varepsilon + \intccrnn R \dnu \ldotp$$
Now - if we manage to show that $\nuxi$ may be chosen in such a way that $\xi \mapsto \nuxi$ is weak* measurable and that the measure $\mu$ defined by duality as
\begin{equation}\label{eqMuDefDuality}
\langle g, \mu \rangle := \intccrnn \left( \intccrnn g(\cdot + \xi) \mathop{d\nuxi} \right) \mathop{d\nu}(\xi)
\end{equation}
is an $\calA-p$ homogeneous Young measure with mean $\xi_0$ then the claim will follow, as by definition $\langle F, \mu \rangle \geqslant R(\xi_0)$. 

For the measurable selection part we define a multifunction $\calF$ given by
$$ \calF(\xi) := \left\{ \mu \in \hpo \colon \intccrnn F(\cdot + \xi) \mathop{d\mu} \leqslant \varepsilon + R(\xi) \right\} \ldotp$$
For the measurable selection result we intend to use (see Theorem \ref{thmKuratowski}) we need $\calF$ to take values in $2^Y$ for some complete metric space $Y$. For that we define, for a given $M > 0$,
$$ \Omega_M := \{ \xi \in \ccrnn \colon \left|\xi\right| < M, R(\xi) \leqslant M \} \ldotp $$
Observe that since we assumed $R$ to be integrable with respect to $\nu$, we have $\xi \in \bigcup_{M = 1}^{\infty} \Omega_M$ for $\nu$-a.e. $\xi \in \ccrnn$. Let us fix $M \in \naturals$. Then, for any $\xi \in \Omega_M$ and any $\mu \in \calF(\xi)$, we have
$$ \int F(z + \xi) \mathop{d\mu}(z) \leqslant \varepsilon + R(\xi) \leqslant 2 \varepsilon + R(\xi) \leqslant M+2 \varepsilon \ldotp$$
The factor $2$ in front of $\varepsilon$ is not important here, we only put it there to allow for some room in the later part of the argument. Due to the growth assumption on $F$ there holds
$$ \int F(z + \xi) \mathop{d\mu}(z) \geqslant C \int \left|z + \xi\right|^p \mathop{d\mu}(z) \geqslant C \int \left|z\right|^p \mathop{d\mu}(z) - C^{-1} \left|\xi\right|^p \ldotp$$
Finally $  \int \left|z\right|^p \mathop{d\mu}(z) \leqslant C_M,$ holds for all $\mu \in \calF(\xi)$, with the constant $C_M$ depending only on $M$ (and $\varepsilon$). Therefore we may consider our operator $\calF$ as a map $\Omega_M \rightarrow 2^{Y_M}$, where
$$ Y_M := \left\{ \mu \in \hpo \colon \int \left|z\right|^p \mathop{d\mu} \leqslant C_M \right\} \ldotp$$
The set $Y_M$ may be equipped with the weak* topology inherited from $E^*$. Since we put a uniform bound on the $p$-th moments (so also on the norm in $E^*$), this topology is metrisable in a complete and separable manner. To prove that first recall that due to Lemma \ref{lemmahpoclosed} $\hpo$ is weak* closed in $E^*$. Since $| \cdot |^p \in E$ we know that the map $\mu \mapsto \intccrd |z|^p \mathop{d\mu}$ is weak* continuous, thus $Y_M$ is weak* closed and bounded. The Banach-Alaoglu Theorem (see for example Theorem 3.16 in \cite{Brezis10}) then implies that $Y_M$ is weak* compact. Since $E$ is clearly separable we deduce that the weak* topology on $Y_M$ is metrisable (see Theorem 3.28 in \cite{Brezis10}). Finally, compact metric spaces are complete and separable, thus proving our claim. 
\begin{lemma}\label{lemmaMeasurability1}
For any $\xi \in \Omega_M$ the set $\calF(\xi)$ is non-empty and closed.
\end{lemma}
\begin{proof}
The fact that $\calF(\xi) \not= \emptyset$ comes straight from the definition of $R$. To show that it is closed it is enough to show that it is sequentially closed. Let us then fix a sequence $\{\mu_j\} \subset \calF(\xi)$ and assume that it converges weak* in $E^*$ to some $\mu \in Y_M$. Since the function $F$ is lower semi-continuous and bounded from below we get by Lemma \ref{lemmaPortmanteau} that
$$ R(\xi) + \varepsilon \geqslant \liminf \int F(\cdot + \xi) \mathop{d\mu_j} \geqslant  \int F(\cdot + \xi) \mathop{d\mu},$$
so $\mu \in \calF(\xi)$, which ends the proof.
\end{proof}

\begin{lemma}\label{lemmaMeasurability2}
For any non-empty closed set $O \subset Y_M$ the set $\{ \xi \in \Omega_M \colon \calF(\xi) \cap O \not= \emptyset \}$ is (Lebesgue) measurable. 
\end{lemma}
\begin{proof}
First we write
$$\{ \xi \in \Omega_M \colon \calF(\xi) \cap O \not= \emptyset \} = \bigcap_{k=1}^{\infty} \left\{ \xi \in \Omega_M \colon \inf_{\mu \in O} \int F(\cdot + \xi) \mathop{d\mu} \leqslant R(\xi) + \varepsilon(1 + 2^{-k}) \right\}\ldotp$$
Hence it is enough to show that the sets 
$$\left \{ \xi \in \Omega_M \colon \inf_{\mu \in O} \int F(\cdot + \xi) \mathop{d\mu} \leqslant R(\xi) + \varepsilon(1 + 2^{-k}) \right\}$$ 
are all measurable. Define
$$ U(\xi) := \inf_{\mu \in O} \int F(\cdot + \xi) \mathop{d\mu}\ldotp $$
We claim that $U$ is lower semi-continuous. Let $\xi_j \rightarrow \xi$. We need to show that $ \liminf_j U(\xi_j) \geqslant U(\xi) \ldotp$
Without loss of generality $\lim_j U(\xi_j) = \liminf_j U(\xi_j) < \infty$. By definition of $U$ for each $k$ there exists a measure $\mu_j \in O$ with
$$ \int F(\cdot + \xi_j) \mathop{d \mu_j} \leqslant U(\xi_j) + 1/k \ldotp$$
Therefore
$$ \lim_j \int F(\cdot + \xi_j) \mathop{d \mu_j} = \lim_j U(\xi_j) \ldotp$$
Since the set $O$ is a closed subset of a compact space $Y_M$ we may extract an $E^*$ weak* convergent subsequence from $\mu_j$. Without loss of generality assume that the entire sequence $\mu_j$ converges weak* to some $\mu \in O$. This, combined with $\xi_j \rightarrow \xi$, implies that we have
$$ \delta_{\xi_j} \ast \mu_j \weakstarconv \delta_{\xi} \ast \mu$$
in the sense of probability measures. Therefore, since $F$ is lower semi-continuous, Portmanteau's theorem yields
$$ \liminf_j \int F(\cdot + \xi_j) \mathop{d \mu_j}  = \liminf_j \int F d \delta_{\xi_j} \ast \mu_j  \geqslant  \int F d \delta_{\xi} \ast \mu =  \int F(\cdot + \xi) \mathop{d \mu} \geqslant U(\xi),$$
which shows that $U$ is indeed lower semi-continuous. Since
$$ \left\{ \xi \in \Omega_M \colon \inf_{\mu \in O} \int F(\cdot + \xi) \mathop{d\mu} \leqslant R(\xi) + \varepsilon(1 + 2^{-k}) \right\} = \{ \xi \in \Omega_M \colon U(\xi) \leqslant R(\xi) + \varepsilon(1 + 2^{-k}) \}$$
and both $U$ and $R$ are lower semi-continuous (hence measurable) the set in question is measurable as well, which ends the proof.
\end{proof}

Now, thanks to Lemmas \ref{lemmaMeasurability1} and \ref{lemmaMeasurability2} we may use Theorem \ref{thmKuratowski} to deduce the existence of a measurable map $\nu^M \colon \Omega_M \rightarrow \hpo$ such that for any $\xi \in \Omega_M$ the measure $\nu^M_{\xi}$ satisfies
$$ \intccrnn F(\cdot + \xi) d\nu^M_{\xi} \leqslant \varepsilon + R(\xi) \ldotp$$
Finally let us define the map $\widetilde{\nu} \colon \ccrnn \rightarrow \hpo$ by
\begin{equation*} 
\widetilde{\nu}_\xi := \begin{cases}
\nu^M_{\xi} \text{ for } \xi \in \Omega_M \setminus \Omega_{M-1} \\
\widetilde{\mu} \text{ for } \xi \not\in \bigcup_{M=1}^{\infty} \Omega_M,
\end{cases}
\end{equation*}
where $\widetilde{\mu}$ is some arbitrary element of the (non-empty) set $\hpo$. Observe that the choice of $\widetilde{\mu}$ does not matter, as we have already observed that the set $\ccrnn \setminus \bigcup_{M=1}^{\infty} \Omega_M$ is of $\nu$ measure $0$. Clearly the map $\widetilde{\nu}$ is weak* measurable, so we may define $\mu \in (C_0(\ccrnn))^*$ as in \eqref{eqMuDefDuality}. It only remains to show that $\mu \in \hpo$. 

Positivity of $\mu$ results immediately from positivity of all $\nu_{\xi}$ and $\nu$. In the same way we show that $\mu$ is a probability measure, as
$$\langle 1, \mu \rangle =  \intccrnn \left( \intccrnn 1 \mathop{d\nuxi} \right) \dnu(\xi) =   \intccrnn 1 d\nu(\xi) = 1,$$
since all measures considered are probability measures. To prove that $\mu$ has a finite $p$-th moment we write
$$\langle \left|\cdot\right|^p, \mu \rangle = \intccrnn \left( \intccrnn \left|\cdot + \xi\right|^p \mathop{d\nuxi} \right) \dnu(\xi) \ldotp $$
Using the growth assumption on $F$ we get
$$ \intccrnn \left|\cdot + \xi\right|^p \mathop{d\nuxi} \leqslant C  \intccrnn F(\cdot + \xi) \mathop{d\nuxi} \leqslant C (R(\xi) + \varepsilon),$$
where the last inequality is satisfied for $\nu$-a.e. $\xi$. Integrating with respect to $\nu$ gives
$$\langle \left|\cdot\right|^p, \mu \rangle \leqslant C \left(\varepsilon + \intccrnn R(\xi) \dnu(\xi)\right) < \infty,$$
since, by assumption, $R$ is integrable with respect to $\nu$. Lastly, it remains to show that $\mu$ satisfies the inequality in \ref{propCharacterizationOfYM}. Fix any admissible test function $g \in E$. We have
\begin{eqnarray*}
\langle \mu, g \rangle &=&  \intccrnn \left( \intccrnn g(\cdot + \xi) \mathop{d\nuxi} \right) \dnu(\xi) \\
&\geqslant&  \intccrnn \qa g(\xi) \dnu(\xi) \geqslant \qa (\qa g) (\xi_0) = \qa g(\xi_0),
\end{eqnarray*} 
where the first inequality comes from the fact that all $\nuxi$'s are Young measures with mean $0$, the second one from the respective property of $\nu$, and the last equality from Lemma \ref{lemmaqag}. This shows that we indeed have $\mu \in \mathbb{H}^p_{\xi_0}$ and ends the proof, as discussed in \eqref{eqMuDefDuality}.
\end{proof}

\subsection{Necessity}
We are now ready to state and prove the main result of the paper:
\begin{theorem}\label{thmContinuousImpliesRelaxation}
If $F \colon \ccrd \to (-\infty, \infty]$ is a continuous integrand satisfying $F(\xi) \geqslant C|\xi|^p - C^{-1}$ for some $C > 0$ then the lower semi-continuous envelope of the functional $\II_F$ is given by
$$\overline{\II}_F [V] := \inf_{V_j} \left\{ \liminf_j \II_F [V_j] \right\}= \int_{\Omega} \overline{F}(V(x)) \dx,$$
where the infimum is taken over all admissible test sequences, i.e. satisfying $V_j \weakConv V$ weakly in $L^p$ and $\calA V_j \rightarrow \calA V$ strongly in $\Wm1p$. As before, $\overline{F}$ denotes the closed $\calA$-$p$ quasiconvex envelope of $F$. 
\end{theorem}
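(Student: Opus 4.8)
The plan is to establish the two matching inequalities separately: the lower bound $\overline{\II}_F[V]\geqslant\intOmega\overline{F}(V(x))\dx$ and the recovery bound $\overline{\II}_F[V]\leqslant\intOmega\overline{F}(V(x))\dx$.

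For the lower bound I would first observe that by Proposition \ref{lemmaQCenvelope} the envelope $\overline{F}$ is itself closed $\calA$-$p$ quasiconvex, and that it is bounded below: since $\xi\mapsto C|\xi|^p-C^{-1}$ is convex, hence closed $\calA$-$p$ quasiconvex and $\leqslant F$, we have $\overline{F}\geqslant C|\xi|^p-C^{-1}$. Thus Theorem \ref{thmQCimpliesLSC} applies to $\overline{F}$ and $\II_{\overline F}$ is lower semi-continuous along every admissible sequence. Since $\overline{F}\leqslant F$ pointwise, for any admissible $V_j\weakConv V$ with $\calA V_j\to\calA V$ we get $\liminf_j\II_F[V_j]\geqslant\liminf_j\II_{\overline F}[V_j]\geqslant\II_{\overline F}[V]=\intOmega\overline F(V)\dx$, and taking the infimum over such sequences yields the lower bound.

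For the recovery bound I may assume $\intOmega\overline{F}(V)\dx<\infty$. Fix $\varepsilon>0$. Repeating the measurable selection argument from the proof of Proposition \ref{lemmaQCenvelope} (via Theorem \ref{thmKuratowski}), I would produce a weak*-measurable family of centred measures $x\mapsto\nu_x\in\hpo$ with $\langle F(\cdot+V(x)),\nu_x\rangle\leqslant\overline F(V(x))+\varepsilon$ for a.e. $x$. The crucial structural point is to oscillate around the fixed field $V$ rather than realise the mean $V(x)$ directly: the family $\{\nu_x\}$ has mean $0\in\ker\calA$, so conditions (i)--(iii) of Proposition \ref{propCharacterisationYMnonhom} hold (condition (iii) being exactly the Jensen inequality guaranteed by Proposition \ref{propCharacterizationOfYM}, since each $\nu_x\in\hpo$), and it is generated by a $p$-equiintegrable $\calA$-free sequence $W_j\weakConv 0$. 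Setting $V_j:=V+W_j$ produces an admissible sequence ($\calA V_j=\calA V$ and $V_j\weakConv V$) generating $\{\delta_{V(x)}\ast\nu_x\}$ by Proposition \ref{propTranslatedYM}.

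The hard part is to pass from generation to an upper bound on $\int F(V_j)$. Theorem \ref{thmFToYM}(ii) only gives $\liminf_j\intOmega F(V_j)\dx\geqslant\intOmega\langle\nu_x,F(\cdot+V(x))\rangle\dx$, which is the wrong direction; because $F$ carries no upper growth bound, $\{F(V_j)\}$ need not be equiintegrable and Theorem \ref{thmFToYM}(iii) is not directly available. This is precisely where continuity of $F$ must be used. I would truncate by $F_M:=\min\{F,\,M(1+|\cdot|^p)\}$, which is continuous, increases pointwise to $F$, and enjoys both a lower and an upper $p$-growth bound; the upper bound together with the $p$-equiintegrability of $V_j$ forces $F_M(V_j)$ to be equiintegrable, so Theorem \ref{thmFToYM}(iii) yields $\lim_j\intOmega F_M(V_j)\dx=\intOmega\langle\nu_x,F_M(\cdot+V(x))\rangle\dx\leqslant\intOmega\overline F(V)\dx+\varepsilon|\Omega|$. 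The remaining difficulty, which I expect to be the main obstacle, is to control $\int(F-F_M)(V_j)$ uniformly in $j$ as $M\to\infty$, i.e. to arrange that the recovery sequence can be chosen so that $\{F(V_j)\}$ is itself equiintegrable. I would resolve this by selecting the measures $\nu_x$ so as to avoid concentration of $F$ (approximating each $\nu_x$ by measures realised through bounded $\calA$-free oscillations, for which $F$ of the generating fields is automatically equiintegrable), and then diagonalising over the truncation level $M\uparrow\infty$ and over $\varepsilon\downarrow 0$. To keep the truncation step consistent I would also record the auxiliary fact that $\overline{F_M}\nearrow\overline F$ pointwise: each $\overline{F_M}$ obeys the representation of Proposition \ref{lemmaQCenvelope}, a near-optimal measure for $\overline{F_M}(\xi)$ has $p$-th moment bounded uniformly in $M$ (the lower growth bound survives truncation), and weak* closedness of $\hpo$ (Lemma \ref{lemmahpoclosed}) together with the semicontinuity furnished by Lemma \ref{lemmaPortmanteau} lets one pass to a limit measure attaining $\overline F(\xi)$.
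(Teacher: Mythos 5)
Your lower bound is exactly the paper's argument (and in fact corrects a sign/direction typo in the paper's opening line): $\overline F$ is itself closed $\calA$-$p$ quasiconvex by Proposition \ref{lemmaQCenvelope}, it dominates the convex (hence closed $\calA$-$p$ quasiconvex) function $\xi\mapsto C|\xi|^p-C^{-1}$, so Theorem \ref{thmQCimpliesLSC} applies to it, and $\overline F\leqslant F$ finishes the inequality $\overline{\II}_F[V]\geqslant\intOmega\overline F(V)\dx$. Your recovery construction also follows the paper faithfully up to a point: measurable selection (Theorem \ref{thmKuratowski}) of near-optimal centred measures $\nu_x\in\hpo$ satisfying \eqref{eqNuxFbar}, generation by a $p$-equiintegrable $\calA$-free sequence $W_j\weakConv 0$ via Propositions \ref{propCharacterisationYMnonhom} and \ref{propCharacterizationOfYM}, truncation $F^M=\min(F,M(1+|\cdot|^p))$, Theorem \ref{thmFToYM}(iii) for the truncated integrands, and a diagonal choice $j(M)$ giving \eqref{eqEstimateForFM}. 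Up to here you have reproduced the paper's proof.

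The last step, however, is a genuine gap, and it is exactly the step the paper resolves differently. You propose to make $\{F(V_j)\}$ itself equiintegrable by ``selecting the measures $\nu_x$ so as to avoid concentration of $F$'', approximating each $\nu_x$ by measures realised through bounded $\calA$-free oscillations. Nothing in the paper supports such an approximation, and it faces serious obstructions: the approximants must stay in $\hpo$, keep barycentre $0$, keep $\langle F(\cdot+V(x)),\nu_x\rangle$ within $\varepsilon$ of $\overline F(V(x))$ even though $F$ has no upper bound (being continuous into $(-\infty,\infty]$, it may equal $+\infty$ outside a compact set), and remain measurably selectable in $x$; moreover, even a compactly supported measure in $\hpo$ need not be generated by \emph{uniformly bounded} $\calA$-free fields --- an $\LL^\infty$-truncation statement of Zhang type is not available for general constant-rank operators $\calA$. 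The paper's device makes all of this unnecessary: equiintegrability of $F(V_j)$ is never obtained, nor needed. Instead one keeps the set $\gm$ where $F(V+W_{j(M)})\leqslant M(|V+W_{j(M)}|^p+1)$, observes $M|\gm^c|\leqslant\intOmega F^M(V+W_{j(M)})\dx\leqslant\intOmega\overline F(V)\dx+2\varepsilon$, so $|\gm^c|\to 0$, and \emph{replaces} $V+W_{j(M)}$ on $\gm^c$ by a fixed $\xi_0$ with $F(\xi_0)<\infty$, as in \eqref{eqDefWnTilde}. The modified field $V+\wntilde$ differs from $V+W_{j(M)}$ by $o(1)$ in $\LL^p$, hence is still admissible --- crucially, admissibility only requires $\calA V_j\to\calA V$ strongly in $\Wm1p$, not exact $\calA$-freeness --- and $\intOmega F(V+\wntilde)\dx=\int_{\gm}F^M(V+W_{j(M)})\dx+F(\xi_0)|\gm^c|\leqslant\intOmega\overline F(V)\dx+2\varepsilon+o(1)$. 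Without this replacement-on-a-small-set argument (or a proof of your unsupported no-concentration claim) your proof does not close: a recovery sequence for $\II_{F^M}$ gives no upper bound on $\II_F$, since $F\geqslant F^M$ and $F$ may be $+\infty$ precisely where the truncation is active. Your auxiliary fact $\overline{F^M}\nearrow\overline F$ is provable with the paper's tools, but it does not repair this, for the same reason.
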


\begin{proof}
Theorem \ref{thmQCimpliesLSC} guarantees that $\overline{\II}_F [V] \leqslant \int_{\Omega} \overline{F}(V(x)) \dx$, thus we only need to prove the opposite inequality. If $F$ is identically equal $+ \infty$ then there is nothing to show, so we may restrict to proper integrands.
As before we may assume $F(\xi) \geqslant C|\xi|^p$. Fix any $V \in \LL^p$. Without loss of generality we may assume  $\int_{\Omega} \overline{F}(V(x)) \dx < \infty$, as otherwise there is nothing to prove. Fix an $\varepsilon > 0$ and observe that clearly we must have $ \overline{F}(V(x)) < \infty \text{ a.e. in } \Omega \ldotp$
Therefore, using Proposition \ref{lemmaQCenvelope}, we may find a family of homogeneous $\calA$-$p$ Young measures $\{\nu_x\}_{x \in \Omega}$ with mean $0$ and such that, for almost every $x \in \Omega$, we have
\begin{equation}\label{eqNuxFbar} 
\overline{F}(V(x)) + \varepsilon \geqslant \intccrd F(\cdot + V(x)) d\nu_x \ldotp
\end{equation}
Using exactly the same argument as in the proof of Proposition \ref{lemmaQCenvelope} we may ensure weak* measurability of $x \rightarrow \nu_x$. We intend to show that $\nu$ is a suitable Young measure using Proposition \ref{propCharacterisationYMnonhom}. The first point therein is clearly satisfied, as all our measures are of mean $0$. The second one may be checked in the same way as in the already mentioned proof of Proposition \ref{lemmaQCenvelope}, using the growth assumption on $F$. Finally, the third point results immediately from the fact that all $\nu_x$'s are, by definition, elements of $\hpo$, so we may use Proposition \ref{propCharacterizationOfYM}. This shows that $\nu$ is indeed generated by some $p$-equiintegrable family $\{W_j\} \subset \LL^p(\Omega; \ccrd) \cap \ker \calA$ with $W_j \weakConv 0$ in $\LL^p$. For a given $M \in \naturals$ consider $F^M(z) := \min(F(z), M(|z|^p + 1))$.
Clearly, for each $M$, the function $F^M$ is continuous and the family $\{F^M(V + W_j)\}_j$ is $p$-equiintegrable, due to the same property of $\{V + W_j\}$. Theorem \ref{thmFToYM} then yields
$$ \int_{\Omega} F^M(V + W_j) \dx \rightarrow \int_{\Omega} \left( \intccrd F^M(V(x) + \cdot) \dnux \right) \dx \ldotp$$
On the other hand, since $F^M \leqslant F$ and $\nu_x$ are non-negative and satisfy \eqref{eqNuxFbar}, we have
\begin{eqnarray*}
\int_{\Omega} \left( \intccrd F^M(V(x) + \cdot) \dnux \right) \dx &\leqslant& \int_{\Omega} \left( \intccrd F(V(x) + \cdot) \dnux \right) \dx\\ 
&\leqslant& \int_{\Omega} \overline{F}(V(x)) \dx + \varepsilon \ldotp
\end{eqnarray*}
From this we deduce, through a diagonal extraction, that there exists a sequence $j(M) \in \naturals$ with $\lim_{M \rightarrow \infty} j(M) = \infty$ such that for all $M$ one has
\begin{equation}\label{eqEstimateForFM} 
\int_{\Omega} F^M(V + W_{j(M)}) \dx \leqslant \int_{\Omega} \overline{F}(V(x)) \dx + 2 \varepsilon \ldotp
\end{equation}
Define the set
$$ \gm := \left\{ x \in \Omega \colon F(V(x) + W_j(x)) \leqslant M(|V(x) + W_j(x)|^p + 1) \right\},$$
and fix some $\xi_0 \in \ccrd$ for which $F(\xi_0) < \infty$, which exists, as $F$ is proper. Next define a vector field $\wntilde$ in such a way that
\begin{equation}\label{eqDefWnTilde} 
V(x) + \wntilde(x) = (V(x) + W_{j(M)}(x)) \indyk_{\gm} + \xi_0 \indyk_{\gm^c} \ldotp
\end{equation}
We claim that $\{V + \wntilde\}_M$ is an admissible vector field in the $\overline{\II}[V]$ problem.
For that it is enough to show that $\| V + \wntilde - (V + W_{j(M)}) \|_{\LL^p(\Omega)} \rightarrow 0$. 
By definition we have 
$$\| V + \wntilde - (V + W_{j(M)}) \|_{\LL^p(\Omega)} = \| V + \wntilde - (V + W_{j(M)}) \|_{\LL^p(\gm^c)} \leqslant $$
$$ \leqslant \|\xi_0\|_{\LL^p(\gm^c)} + M^{-1} \left(\intOmega F^M(V + W_{j(M)}) \dx \right)^{1/p},$$
where the last inequality comes from the definition of the set $\gm^c$ (and extending the integral to all of $\Omega$). Now, the last term here is bounded by $M^{-1} \left( \int_{\Omega} \overline{F}(V(x)) \dx + 2 \varepsilon \right)^{1/p}$ due to \eqref{eqEstimateForFM}, thus showing the desired convergence to $0$ in $\LL^p$, as $\|\xi_0\|_{\LL^p(\gm^c)} \to 0$ results simply from the fact that clearly the Lebesgue measure of $\gm^c$ tends to $0$.
This implies in particular that $\calA \wntilde \rightarrow 0$ in $\Wm1p$ and $\wntilde \weakConv 0$ in $\LL^p$. Therefore if we define
$$ V_M(x) := V(x) + \wntilde(x)$$
we see that
\begin{equation*}
\begin{cases}
V_M \weakConv V \quad \text{in } \LL^p,\\
\calA V_M \rightarrow \calA V \quad \text{in } \Wm1p \ldotp
\end{cases}
\end{equation*}
Thus
\begin{eqnarray*}
\overline{\II}[V] &\leqslant& \liminf_{M \rightarrow \infty} \int_{\Omega} F(V + \wntilde) \dx\\ 
&=& \liminf_{M \rightarrow \infty} \int_{\gm} F^M(V + W_{j(M)}) \dx + \int_{\gm^c} F(\xi_0) \dx\\
&\leqslant& \liminf_{M \rightarrow \infty} \int_{\Omega} \overline{F}(V(x)) \dx + 2 \varepsilon = \int_{\Omega} \overline{F}(V(x)) \dx + 2 \varepsilon,
\end{eqnarray*} 
where the last inequality results from \eqref{eqEstimateForFM} and the measure of $\gm^c$ tending to $0$. Since $\varepsilon > 0$ was arbitrary the proof is complete.
\end{proof}

\begin{remark}
Observe that the above result is stronger than just necessity of quasiconvexity for lower semi-continuity. The downside is the continuity requirement for the integrand. However, it seems that this assumption cannot be easily removed if one hopes for a full relaxation result in the spirit of the one above. This has been discussed in \cite{BraidesFonsecaLeoni00} in an example given in Remark 1.2. There the authors exhibit an example of a constant rank operator $\calA$ operating on vector fields $v \colon \ccr \to \ccr^2$ with $\calA v = 0$ if and only if $v_2' = 0$ and a discontinuous function $f$ defined as
\begin{equation*}
f(v) := 
\begin{cases}
(v_1 - 1)^2 + v_2^2, \quad \text{if } v_2 \in \mathbb{Q},\\
(v_1 + 1)^2 + v_2^2, \quad \text{if } v_2 \in \ccr \setminus \mathbb{Q} \ldotp
\end{cases}
\end{equation*}
Observe that $f$ satisfies quadratic growth bounds both from above and from below. Nevertheless, it turns out that if $\calF(v; (a,b))$ denotes the sequential lower semi-continuous envelope of $v \mapsto \int_a^b f(v) \dx$ with respect to $v_j \weakConv v$ in $\LL^p$ and $\calA v_j \to \calA v$ in $\Wm1p$ then for any interval $(a,b) \subset (0,1)$ one has
$$ \calF(v; (a,b)) = \min \left( \int_a^b (v_1 - 1)^2 + v_2^2 \dx, \int_a^b (v_1 + 1)^2 + v_2^2 \dx \right) \ldotp$$
Thus, $\calF(v; \cdot)$ is not the trace of a Radon measure, hence there cannot exist an integrand $F$ such that $\calF(v; (a,b)) = \int_a^b F(v) \dx$ and so in general one cannot hope for a relaxation result of the type above that allows for discontinuous functions.
\end{remark}

This being said, it is still possible to obtain some results for less regular integrands, and this is what we will do in the last part of the paper. We show that under additional conditions on the characteristic cone of the operator $\calA$ the continuity of the integrand may be deduced from lower semi-continuity of the functional, and thus need not be assumed, hence leading to the equivalence of sequential weak lower semi-continuity of the functional and closed $\calA$-$p$ quasiconvexity of the integrand. This will be the content of our final result.

\begin{defi}
We define the characteristic cone of $\calA$ to be the set 
$$ \Lambda := \bigcup_{w \in S^{N-1}} \ker \bbA (w) \ldotp$$
\end{defi}

\begin{lemma}\label{lemmaSepConvex}
Suppose that the integrand $F$ is real-valued and such that the functional $V \mapsto \II_F[V]$ is $\calA$-$\infty$ sequentially weakly* lower semi-continuous, i.e. for every sequence $V_j$ with $V_j \weakStarConv V$ in $\LL^{\infty}(\Omega; \ccrd)$ and $\calA(V_j - V) = 0$ for all $j$ one has
$$\int_{\Omega} F(V(x)) \dx \leqslant \liminf_{j \rightarrow \infty} \int_{\Omega} F(V_j(x)) \dx \ldotp$$
Then $F$ is (separately) convex along any direction given by a vector in $\Lambda$.
\end{lemma}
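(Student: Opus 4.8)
The plan is to test the lower semi-continuity hypothesis against a one-parameter family of plane-wave oscillations supported on a single direction $w_0\in S^{N-1}$ for which $\bbA(w_0)\lambda=0$; such oscillations are automatically $\calA$-free and hence admissible. Fix $\lambda\in\Lambda$ and choose $w_0\in S^{N-1}$ with $\lambda\in\ker\bbA(w_0)$. Fix a base point $\xi\in\ccrd$ and $t\in(0,1)$; the goal is the two-point inequality
$$F(\xi)\leq t\,F(\xi+(1-t)\lambda)+(1-t)\,F(\xi-t\lambda).$$
Applying this to scalar multiples $c\lambda\in\ker\bbA(w_0)\subseteq\Lambda$ (and all base points) recovers convexity of $r\mapsto F(\xi_0+r\lambda)$ for every $\xi_0$, which is exactly separate convexity along $\lambda$.

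To this end I would let $\phi\colon\ccr\to\ccr$ be the $1$-periodic step function equal to $1-t$ on $[0,t)$ and to $-t$ on $[t,1)$, so that $\int_0^1\phi=0$, and set
$$V_j(x):=\xi+\phi(j\,w_0\cdot x)\,\lambda\in\LL^\infty(\Omega;\ccrd).$$
The crucial point is that, although $\phi$ is only a step function, the distributional chain rule gives $\partial_{x_i}V_j=j\,(w_0)_i\,\phi'(j\,w_0\cdot x)\,\lambda$, whence
$$\calA V_j=j\,\phi'(j\,w_0\cdot x)\sum_{i=1}^N (w_0)_i A^i\lambda=j\,\phi'(j\,w_0\cdot x)\,\bbA(w_0)\lambda=0,$$
the singular factor $\phi'(j\,w_0\cdot x)$ being annihilated by the zero vector $\bbA(w_0)\lambda$. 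Using a step (rather than smooth) profile is essential here: since $F$ is merely real-valued and need not be continuous, I must arrange that $V_j$ take only the two values $\xi+(1-t)\lambda$ and $\xi-t\lambda$, so that $F(V_j)$ is evaluated exactly at these two points and stays bounded.

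Next I would record the two convergences making $\{V_j\}$ an admissible competitor and identify the limiting energy. As $\phi$ has mean zero, the plane-wave oscillation $\phi(j\,w_0\cdot x)$ converges weak$^*$ to $0$, so $V_j\weakStarConv\xi=:V$ in $\LL^\infty$, while $\calA(V_j-V)=\calA V_j=0$ for all $j$. Writing $\Psi(s):=F(\xi+\phi(s)\lambda)$, a bounded $1$-periodic function, we have $F(V_j(x))=\Psi(j\,w_0\cdot x)$, and the Riemann--Lebesgue argument for plane waves yields $\Psi(j\,w_0\cdot x)\weakStarConv\int_0^1\Psi=t\,F(\xi+(1-t)\lambda)+(1-t)\,F(\xi-t\lambda)$; testing against $\indyk_\Omega$ gives
$$\int_\Omega F(V_j)\,\dx\;\longrightarrow\;|\Omega|\Big(t\,F(\xi+(1-t)\lambda)+(1-t)\,F(\xi-t\lambda)\Big).$$
Feeding this into the hypothesis, $|\Omega|\,F(\xi)=\int_\Omega F(V)\,\dx\leq\liminf_j\int_\Omega F(V_j)\,\dx$, and dividing by $|\Omega|>0$ gives the desired two-point inequality.

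The main obstacle I anticipate is not the soft analysis but the two places where the profile is non-smooth and the direction $w_0$ is arbitrary. First, one must justify rigorously that $\calA V_j=0$ in the sense of distributions despite $\phi'$ being a periodic sum of Dirac masses; the clean way is the distributional identity $\partial_{x_i}[\phi(w_0\cdot x)]=(w_0)_i\,\phi'(w_0\cdot x)$ for functions of a single linear combination $w_0\cdot x$, after which vanishing is forced by $\bbA(w_0)\lambda=0$ alone. Second, Lemma \ref{lemmaOscillationConv} as stated concerns the full-lattice rescaling $w(jx)$ of a $\torusN$-periodic map, whereas here the oscillation is a plane wave in the single, generally non-lattice-aligned, direction $w_0$; one therefore needs the standard, but separately justified, Riemann--Lebesgue lemma $g(j\,w_0\cdot x)\weakStarConv\int_0^1 g$ for bounded $1$-periodic $g$, which can be proved directly by reducing to indicators of boxes and invoking equidistribution of $\{j\,w_0\cdot x\}$.
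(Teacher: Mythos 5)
Your proposal is correct and follows essentially the same route as the paper: the paper's proof also takes the two-valued step ("laminate") profile oscillating in the direction $w$ with $y-z\in\ker\bbA(w)$, extended periodically (via a rotated cube $Q_w$ rather than your plane-wave notation $\phi(j\,w_0\cdot x)$), observes that $\bbA(w)(y-z)=0$ forces $\calA u_j=0$, invokes the oscillation Lemma \ref{lemmaOscillationConv} for the weak* convergence, and computes the limit energy exactly because the composed integrand takes only two values. Your explicit distributional chain-rule verification and the separate Riemann--Lebesgue justification for non-lattice-aligned directions are just more detailed write-ups of steps the paper treats as immediate (the paper handles the direction issue by the rotated-cube periodicity instead).
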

Similar results have been given in the literature, for example in Section 6 of \cite{Tartar79}, but for the sake of completeness we present a proof for this particular case below.
\begin{proof}
Fix $\theta \in (0,1)$ and $y,z \in \ccrd$ such that $y - z \in \ker \bbA (w) \subset \Lambda$ with $w \in S^{N-1}$. We need to show that $ F(\theta y + (1 - \theta) z) \leqslant \theta F(y) + (1-\theta)F(z) \ldotp$
To this end let $Q_w \subset \ccrN$ be a rotated unit cube with two neighbouring (connected by an edge) vertices $0$ and $w$. Such a cube is not unique, but that is not important, simply pick an arbitrary one. Define a function $u$ by
\begin{equation*}
u(x) := \begin{cases} (1-\theta)(y-z) \colon \langle x, w \rangle \in [0, \theta), \\
\theta(z-y) \colon \langle x, w \rangle \in [\theta, 1]
\end{cases}
\end{equation*}
for $x \in Q_w$. Thus $\left| \{ x \colon u(x) = (1-\theta)(y-z) \} \right| = \theta$ and $\left|  \{ x \colon u(x) = \theta(z-y) \} \right| = 1 - \theta$. Extend $u$ to $\ccrN$ by $Q_w$-periodicity. Since $y-z \in \ker \bbA(w)$ it is easy to see that $\calA u = 0$.
Finally let $u_j(x) := u(nx)$ for $x \in \Omega$. By Lemma \ref{lemmaOscillationConv} we have
$$ u_j \weakStarConv \int_{Q_w} u(y) \dy = 0,$$
where the convergence is weak* in $\LL^{\infty}(\Omega)$. Clearly we also have $\calA u_j = 0$ for all $n$. Thus we may use the lower semi-continuity assumption on our functional with $V(x) := \theta y + (1-\theta)z$ and $V_j(x) := V + u_j$. This yields
\begin{eqnarray*}
\left| \Omega \right| F(\theta y + (1-\theta)z ) &=& \int_{\Omega} F(V(x)) \dx \leqslant \liminf_j \int_{\Omega} F(V(x) + u_j(x)) \dx \\
&=&  \left| \Omega \right| \left( \theta F(y) + (1-\theta) F(z) \right),
\end{eqnarray*}
thus ending the proof.
\end{proof}

\begin{corollary}\label{corContinuityFromLsc}
Suppose that the characteristic cone of $\calA$ spans the entire space, i.e. $\span2 \Lambda = \ccrn$ and that $F$ is real-valued and such that $\II_F$ is lower semi-continuous in the sense of the previous Lemma. Then $F$ is continuous.
\end{corollary}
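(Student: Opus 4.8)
The plan is to extract continuity from the separate convexity already produced by Lemma \ref{lemmaSepConvex}, reducing everything to a standard statement about separately convex functions via a linear change of variables. First I would use the spanning hypothesis on the characteristic cone to pick vectors $v_1, \dots, v_n \in \Lambda$ forming a basis of $\ccrn$; since $\Lambda$ is a union of the subspaces $\ker \bbA(w)$ it is a cone, so for each $i$ and each base point $x$ the increments of the line $x + \ccr v_i$ lie in $\Lambda$, and Lemma \ref{lemmaSepConvex} gives that $t \mapsto F(x + t v_i)$ is convex. Introducing the linear isomorphism $L$ with $L e_i = v_i$ and setting $G := F \circ L$, the function $G$ is convex in each coordinate separately (is \emph{separately convex}); since $L$ is a homeomorphism, continuity of $F$ is equivalent to continuity of $G$. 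Thus it suffices to prove that a real-valued separately convex function $G \colon \ccrn \to \ccr$ is continuous, and I would in fact show it is locally Lipschitz.

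The argument for the separately convex statement proceeds in three steps on a fixed closed cube $Q$. \textbf{(Upper bound.)} By induction on the dimension, using that a one-dimensional convex function on an interval attains its maximum at an endpoint, $G$ attains its maximum over $Q$ at one of the $2^n$ vertices, whence $G \le M := \max_{\text{vertices}} G$ on $Q$. \textbf{(Lower bound.)} Fix the center $x_0$ of $Q$ and a point $y$ in a smaller concentric cube, and move from $x_0$ to $y$ one coordinate at a time, $x_0 = z_0, z_1, \dots, z_n = y$, where $z_k \to z_{k+1}$ changes only coordinate $k+1$. Reflecting $z_{k+1}$ through $z_k$ in that coordinate produces a point still in $Q$ (this is why the smaller cube is needed), so one-dimensional convexity gives $G(z_k) \le \tfrac12(G(z_{k+1}) + G(\text{reflected point})) \le \tfrac12 (G(z_{k+1}) + M)$, i.e. $G(z_{k+1}) \ge 2 G(z_k) - M$; iterating yields the uniform bound $G(y) \ge 2^n G(x_0) - (2^n - 1)M$ on the smaller cube. \textbf{(Local Lipschitz.)} With $G$ now two-sidedly bounded, $|G| \le K$ on a cube, the classical one-dimensional estimate (monotonicity of the difference quotients of a convex function bounded by $K$ on an interval of length $\ell$ gives Lipschitz constant $\le 2K/\ell$ on the concentric half-interval) applies along each coordinate segment; connecting two nearby points by an axis-parallel path and summing the $n$ one-dimensional increments gives $|G(x) - G(y)| \le C(K,\ell,n)\,|x - y|$, hence continuity.

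The main obstacle is the \textbf{lower bound}: unlike the upper bound, an upper bound on a convex function over an interval gives no control from below on its own (a convex function can dip arbitrarily low in the interior while staying below $M$), so one cannot argue coordinate-wise using only $M$. The reflection/iteration device circumvents this by propagating the single value $G(x_0)$ together with the uniform upper bound, at the price of shrinking the cube so that every reflected point remains inside $Q$; the only delicate bookkeeping is keeping track of the admissible region so that all intermediate and reflected points stay where the bounds hold. Everything else reduces to the elementary one-dimensional theory of convex functions and the fact that a linear isomorphism is a homeomorphism, with real-valuedness of $F$ used throughout to guarantee that the one-dimensional restrictions are finite convex functions.
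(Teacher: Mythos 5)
Your proposal is correct and follows essentially the same route as the paper: invoke Lemma \ref{lemmaSepConvex} to get convexity along $\Lambda$-directions, use $\span2 \Lambda = \ccrn$ to reduce to a separately convex function, and conclude local Lipschitz continuity by the standard argument for rank-one convex functions. The only difference is that the paper delegates the separately-convex-implies-locally-Lipschitz step to \cite{BallKircheimKristensen00}, whereas you carry it out in full (upper bound at vertices, reflection trick for the lower bound, one-dimensional difference-quotient estimates), and your execution of that step is sound.
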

\begin{proof}
Using the previous Lemma and the assumption $\span2 \Lambda = \ccrn$ one may show that $F$ is locally Lipschitz in the exact same manner as for rank-one convex functions. We refer the reader to  \cite{BallKircheimKristensen00} for details.

Let us note that the assumption $\span2 \Lambda = \ccrn$ is essential here. When it fails, the $\calA$-quasiconvexity does not improve regularity along directions that are not in $\span2 \Lambda$, in fact there are examples for loss of regularity, when taking the $\calA$-quasiconvex envelope of a smooth function yields a discontinuous one - see Remark 3.5 in \cite{FonsecaMuller99}.
\end{proof}

This leads to the final result of the paper:
\begin{theorem}\label{thmLSCimpliesQC}
Suppose that $F$ is real valued, satisfies the growth condition $F(\xi) \geqslant C |\xi|^p$ and that $\span2 \Lambda = \ccrn$. Then the functional $\II_F$ is sequentially lower semi-continuous in the usual sense if and only if $F$ is closed $\calA$-$p$ quasiconvex. 
\end{theorem}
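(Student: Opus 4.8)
Our statement is an equivalence. The forward implication is essentially free: if $F$ is closed $\calA$-$p$ quasiconvex then, since $F$ is in particular real-valued, lower semi-continuous and non-negative (as $F(\xi) \geqslant C|\xi|^p$), Theorem \ref{thmQCimpliesLSC} applies verbatim and yields sequential lower semi-continuity of $\II_F$ in the required sense. So the entire content lies in the converse: assuming $\II_F$ is sequentially lower semi-continuous, I must deduce that $F$ is closed $\calA$-$p$ quasiconvex. The plan is to reduce the converse to the relaxation theorem already established.

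The first step is to upgrade lower semi-continuity to continuity. I would observe that the assumed lower semi-continuity (tested against $V_j \weakConv V$ in $\LL^p$ with $\calA V_j \to \calA V$ in $\Wm1p$) implies in particular the weaker $\calA$-$\infty$ lower semi-continuity of Lemma \ref{lemmaSepConvex}: if $V_j \weakStarConv V$ in $\LL^\infty(\Omega;\ccrd)$ with $\calA(V_j - V) = 0$, then on the bounded domain $\Omega$ one has $V_j \weakConv V$ in $\LL^p$ and $\calA V_j = \calA V$, so $\{V_j\}$ is an admissible test sequence and the hypothesis applies. Lemma \ref{lemmaSepConvex} then gives that $F$ is separately convex along every direction of $\Lambda$, and since $\span2 \Lambda = \ccrn$, Corollary \ref{corContinuityFromLsc} promotes this to (local Lipschitz) continuity of $F$.

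With $F$ now known to be continuous, real-valued, and satisfying $F(\xi) \geqslant C|\xi|^p$, I would invoke Theorem \ref{thmContinuousImpliesRelaxation}: the sequential lower semi-continuous envelope of $\II_F$ is represented by the closed $\calA$-$p$ quasiconvex envelope, that is $\overline{\II}_F[V] = \int_\Omega \overline{F}(V(x)) \dx$ for every $V \in \LL^p$. On the other hand, the assumption that $\II_F$ is itself sequentially lower semi-continuous forces its envelope to coincide with $\II_F$: the inequality $\overline{\II}_F \geqslant \II_F$ is exactly lower semi-continuity, while $\overline{\II}_F \leqslant \II_F$ follows by testing the infimum with the constant sequence $V_j \equiv V$. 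Combining, $\int_\Omega \overline{F}(V) \dx = \int_\Omega F(V) \dx$ for all $V \in \LL^p$, and taking $V \equiv \xi$ constant gives $|\Omega|\,\overline{F}(\xi) = |\Omega|\, F(\xi)$, hence $\overline{F}(\xi) = F(\xi)$ for every $\xi \in \ccrd$ (all quantities being finite since $F$ is real-valued). Thus $F = \overline{F}$, and by Proposition \ref{lemmaQCenvelope} the envelope $\overline{F}$ is closed $\calA$-$p$ quasiconvex, which is precisely what we want.

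The main obstacle, and the reason the hypothesis $\span2 \Lambda = \ccrn$ is indispensable, is the passage from lower semi-continuity to continuity of $F$. Lower semi-continuity of $\II_F$ alone does not force $F$ to be continuous: the separate convexity of Lemma \ref{lemmaSepConvex} controls $F$ only along directions of $\Lambda$, and only when these span all of $\ccrn$ does Corollary \ref{corContinuityFromLsc} deliver continuity. Continuity is in turn exactly what Theorem \ref{thmContinuousImpliesRelaxation} needs: its truncation argument copes with the absence of an upper growth bound on $F$, but it relies on applying Theorem \ref{thmFToYM} to the continuous truncations $F^M$, and for a merely lower semi-continuous integrand one obtains only one-sided control and the identification of the relaxation breaks down. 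The remark preceding this theorem, where $\span2 \Lambda \neq \ccrn$ and the relaxation of a discontinuous $f$ is not the trace of a Radon measure, confirms that the hypothesis cannot simply be dropped. Everything else in the argument is routine once continuity is in hand.
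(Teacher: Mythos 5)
Your proposal is correct and follows essentially the same route as the paper: the forward implication via Theorem \ref{thmQCimpliesLSC}, and the converse by combining Corollary \ref{corContinuityFromLsc} (to upgrade lower semi-continuity to continuity, where $\span2 \Lambda = \ccrn$ enters) with Theorem \ref{thmContinuousImpliesRelaxation}, concluding $F = \overline{F}$ and hence closed $\calA$-$p$ quasiconvexity via Proposition \ref{lemmaQCenvelope}. In fact you make explicit two steps the paper leaves implicit — that weak* $\LL^{\infty}$ test sequences with $\calA(V_j - V) = 0$ are admissible for the $\LL^p$ lower semi-continuity hypothesis, and that the identity of the functionals is transferred to the integrands by testing with constant fields — so your write-up is, if anything, more complete than the original.
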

\begin{proof}
We already know that closed $\calA$-$p$ quasiconvexity implies lower semi-continuity of the functional, thanks to Theorem \ref{thmQCimpliesLSC}. The other implication is a simple consequence of Corollary \ref{corContinuityFromLsc} and Theorem \ref{thmContinuousImpliesRelaxation}. If we assume lower semi-continuity of the functional then this implies that the relaxation introduced in Theorem \ref{thmContinuousImpliesRelaxation} is equal to the functional itself. On the other hand, continuity of the integrand implies that the relaxation is given by integration of the quasiconvex envelope. From these two facts we infer that $F$ must be equal to its quasiconvex envelope, thus ending the proof.
\end{proof}

\section{Appendix}
We have mentioned in the introduction that the regularisation results for sequences generating $\calA$-$p$ Young measures rely heavily on an analogue of Helmholtz decomposition for the operator $\calA$. Specifically one wants to obtain a projection-like operator onto the kernel of $\calA$, show that it is a Fourier multiplier, and then do the same for a generalised inverse of the projection. This is done using the following (see \cite{SteinWeiss71}) result:

\begin{proposition}\label{propFourierMultipliers}
If $\Theta \colon \ccrN \setminus \{0\} \rightarrow \ccr$ is homogeneous of degree $0$ and if it is smooth on $S^{N-1}$ then the operator $T_{\Theta} \colon \LL^p(\torusN) \rightarrow \LL^p(\torusN)$ defined by
$$T_{\Theta}f(x) :=  \sum_{\lambda \in \Delta \setminus \{0\} } \Theta(\lambda) \widehat{f}(\lambda) e^{2\pi i x\cdot \lambda} \quad \text{for } f \in \LL^p(\torusN), \, f = \sum_{\lambda \in \Delta} \widehat{f}(\lambda)e^{2\pi i x\cdot \lambda} $$
is a Fourier multiplier operator for any $1 < p < \infty$. 
\end{proposition}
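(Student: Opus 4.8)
The plan is to reduce the periodic multiplier problem to the classical Mikhlin multiplier theorem on $\ccrN$ and then transfer the resulting bound back to the torus. First I would record the symbol estimates satisfied by $\Theta$. Since $\Theta$ is homogeneous of degree $0$, differentiating the identity $\Theta(t\xi) = \Theta(\xi)$ in $\xi$ shows that $\partial^{\alpha}\Theta$ is homogeneous of degree $-|\alpha|$ for every multi-index $\alpha$. Writing $\xi = |\xi|\omega$ with $\omega \in S^{N-1}$ then gives $\partial^{\alpha}\Theta(\xi) = |\xi|^{-|\alpha|}(\partial^{\alpha}\Theta)(\omega)$, and since $\Theta$ is smooth on the compact sphere each restriction $(\partial^{\alpha}\Theta)|_{S^{N-1}}$ is bounded. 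Hence
$$ |\xi|^{|\alpha|}\,|\partial^{\alpha}\Theta(\xi)| \leqslant C_{\alpha} \quad \text{for all } \xi \neq 0,$$
which is precisely the H\"{o}rmander--Mikhlin condition (it suffices to verify this for $|\alpha| \leqslant \lfloor N/2 \rfloor + 1$).

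Second, I would invoke the Mikhlin multiplier theorem on $\ccrN$: a symbol satisfying the above estimates defines a bounded Fourier multiplier operator $\widetilde{T}_{\Theta}$ on $\LL^p(\ccrN)$ for every $1 < p < \infty$, with operator norm controlled by finitely many of the constants $C_{\alpha}$. This is the Euclidean analogue of the periodic statement we wish to establish.

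Third, the core step is transference. By de Leeuw's transference principle, the restriction to the lattice $\Delta$ of a multiplier that is continuous at the lattice points and bounded on $\LL^p(\ccrN)$ is a bounded multiplier on $\LL^p(\torusN)$, with norm no larger than the Euclidean multiplier norm. The only point where $\Theta$ fails to be continuous is the origin, but the series defining $T_{\Theta}f$ explicitly omits the $\lambda = 0$ term, so we only sample $\Theta$ at nonzero lattice points, where it is smooth; the singularity is invisible to $T_{\Theta}$. To apply the theorem rigorously I would replace $\Theta$ by $\Theta_0 := \chi\Theta$, where $\chi$ is a fixed smooth cutoff equal to $1$ outside a small ball $B_{\rho}(0)$ and vanishing near the origin, with $\rho$ chosen so small that $B_{\rho}(0)\cap\Delta = \{0\}$. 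Then $\Theta_0$ is smooth on all of $\ccrN$, still satisfies the Mikhlin estimates (its derivatives are controlled in the transition annulus since $|\xi|$ is bounded below there, and it vanishes near $0$), and agrees with $\Theta$ at every nonzero lattice point, so the periodic operator it induces is exactly $T_{\Theta}$. Transference applied to the Euclidean operator from the previous step then yields the claimed $\LL^p(\torusN)$ boundedness.

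The main obstacle is precisely this transference step: one must make precise that sampling a Euclidean Mikhlin multiplier at the lattice produces a periodic multiplier without loss, and the cutoff device above is what handles the origin cleanly. An alternative route, avoiding transference entirely, is a direct periodic Calder\'{o}n--Zygmund argument: periodise the Euclidean convolution kernel associated with $\Theta_0$ and verify the H\"{o}rmander integral condition for the resulting periodic kernel, using the off-diagonal decay estimates that follow from the symbol bounds. In either case, for a trigonometric polynomial $f$ the sum defining $T_{\Theta}f$ is finite, so the a priori bound on $\LL^p$ extends to all of $\LL^p(\torusN)$ by density of trigonometric polynomials, completing the proof.
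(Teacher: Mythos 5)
Your proposal is correct, and every step holds up: the homogeneity computation does give the H\"{o}rmander--Mikhlin bounds $|\xi|^{|\alpha|}|\partial^{\alpha}\Theta(\xi)| \leqslant C_{\alpha}$, the cutoff $\Theta_0 = \chi\Theta$ with $B_{\rho}(0) \cap \Delta = \{0\}$ still satisfies those bounds (the transition annulus has $|\xi|$ bounded above and below, so the extra terms from derivatives of $\chi$ are harmless), and since $\Theta_0$ is continuous everywhere and agrees with $\Theta$ on $\Delta \setminus \{0\}$ while vanishing at $\lambda = 0$, de Leeuw's restriction theorem transfers the Euclidean $\LL^p(\ccrN)$ bound to exactly the operator $T_{\Theta}$ on $\LL^p(\torusN)$. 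Note, however, that the paper offers no proof of this proposition: it is quoted as a known result from \cite{SteinWeiss71}, so there is no internal argument to compare against. Your route is essentially the standard one underlying that reference, with one presentational difference: Stein and Weiss establish the periodic statement directly, by periodising the Calder\'{o}n--Zygmund convolution kernel associated with the homogeneous symbol and verifying the relevant kernel estimates for the periodic operator, rather than by invoking de Leeuw's theorem as a black box. That kernel-periodisation argument is precisely the ``alternative route'' you sketch in your last paragraph, so in effect you have described both the textbook proof and a cleaner modern packaging of it; the de Leeuw version buys brevity and a sharp norm inequality, while the direct kernel argument is self-contained and avoids citing a separate transference principle. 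Your observation that the singularity of $\Theta$ at the origin is invisible to $T_{\Theta}$ because the $\lambda = 0$ term is omitted, and that a fixed smooth cutoff makes this rigorous, is exactly the right device and is where a careless transference argument would otherwise fail.
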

Here $\Delta := \mathbb{Z}^N \subset \ccrN$, and for $\lambda \in \Delta$ we denote by $\widehat{f}(\lambda)$ the corresponding Fourier coefficient of the function $f$. 
The projection and its generalised inverse are denoted $\bbP(w)$ and $\bbQ(w)$ respectively and, for a given $w \in \ccrN$, are defined as follows. The projection $\bbP(w)$ is simply the orthogonal projection of $\ccrn$ onto $\ker( \bbA(w))$, whilst $\bbQ(w)$ is required to satisfy $ \bbQ(w) \equiv 0 \text{ on } \im(\bbA(w))^{\perp}$, and for $\bbA(w)v \in \im(\bbA(w))$ with $v \in \ccrd$
\begin{equation}\label{eqQAP}
\bbQ(w)(\bbA(w)v) = v - \bbP(w)v \ldotp
\end{equation}
That is, $\bbQ(w)$ is the Moore-Penrose generalised inverse of $\bbP(w)$. 

The difficult part here is showing smoothness of the maps $w \mapsto \bbP(w)$ and $w \mapsto \bbQ(w)$ in order to be able to use the previous Proposition. This has been argued previously by means of the Cauchy Representation Formula and the usual reference given is \cite{Kato13}. However we were unable to find a full, detailed proof of this result, thus below we offer a more elementary one, relying on early results on the Moore-Penrose generalised inverse.

The main source for this part is \cite{Evard90} with the exception of the very first lemma we give, which follows another paper by the same author, see Lemma 5.4 \cite{Evard85}.

\begin{lemma}\label{lemmaLinearIndependence}
Let $D \subset \ccrN$ be open with $w_0 \in D$. Let $x_1, \ldots x_k$ be continuous functions defined on $D$ and valued in $\ccrm$. Assume that the vectors $x_1(w_0), \ldots, x_k(w_0)$ are linearly independent. Then there exists some open neighbourhood $D_0$ of $w_0$ such that $x_1(w), \ldots, x_k(w)$ are linearly independent for all $w \in D_0$. 
\end{lemma}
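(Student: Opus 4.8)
The plan is to prove Lemma \ref{lemmaLinearIndependence} by reducing linear independence of the vectors $x_1(w), \ldots, x_k(w)$ to the non-vanishing of a suitable determinant, and then using continuity to propagate non-vanishing from $w_0$ to a whole neighbourhood. First I would form the $m \times k$ matrix $X(w)$ whose columns are $x_1(w), \ldots, x_k(w)$. Linear independence of these $k$ vectors in $\ccrm$ is equivalent to the matrix $X(w)$ having rank $k$, which (since $k \leqslant m$) is equivalent to the existence of some $k \times k$ minor of $X(w)$ with non-zero determinant. Since each entry of $X(w)$ is a continuous function of $w$ (being a coordinate of one of the continuous maps $x_i$), every such $k \times k$ minor is a continuous function of $w$, being a polynomial in the entries.

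By hypothesis the vectors $x_1(w_0), \ldots, x_k(w_0)$ are linearly independent, so $X(w_0)$ has rank $k$, and hence there is at least one choice of $k$ rows, say indexed by $I = \{i_1, \ldots, i_k\} \subset \{1, \ldots, m\}$, for which the corresponding $k \times k$ submatrix $X_I(w_0)$ has $\det X_I(w_0) \neq 0$. The plan is then to set $\varphi(w) := \det X_I(w)$; this is a continuous real-valued function on $D$ with $\varphi(w_0) \neq 0$. By continuity the preimage $\varphi^{-1}(\ccr \setminus \{0\})$ is open and contains $w_0$, so there is an open neighbourhood $D_0 \subset D$ of $w_0$ on which $\varphi$ never vanishes. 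For every $w \in D_0$ the submatrix $X_I(w)$ is then invertible, so $X(w)$ has rank at least $k$, and since it has exactly $k$ columns its rank is exactly $k$; equivalently its columns $x_1(w), \ldots, x_k(w)$ are linearly independent. This is precisely the desired conclusion.

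There is essentially no serious obstacle here: the statement is a standard openness-of-full-rank fact, and the only thing to be careful about is packaging linear independence correctly as the non-vanishing of a single continuous scalar function (the chosen minor) so that the elementary topological argument applies. One could alternatively argue directly without determinants, by supposing for contradiction that no such neighbourhood exists, extracting a sequence $w_n \to w_0$ together with normalised coefficient vectors $c_n \in S^{k-1}$ satisfying $\sum_j c_{n,j} x_j(w_n) = 0$, passing to a convergent subsequence $c_n \to c \in S^{k-1}$, and using continuity of the $x_j$ to obtain $\sum_j c_j x_j(w_0) = 0$ with $c \neq 0$, contradicting independence at $w_0$. The determinant route is cleaner and more self-contained, so I would present that one, mentioning the compactness argument only as a remark if at all.
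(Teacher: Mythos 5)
Your proposal is correct. Note that the paper does not actually prove this lemma: it is stated as a quoted result, with a citation to Lemma 5.4 of \cite{Evard85}, so there is no in-paper argument to compare against. Your determinant argument is the standard proof of this openness-of-independence fact and is complete: forming the $m \times k$ matrix $X(w)$, observing that independence at $w_0$ forces $k \leqslant m$ and yields a $k \times k$ submatrix $X_I(w_0)$ with non-vanishing determinant, and then using continuity of $w \mapsto \det X_I(w)$ to get a neighbourhood where that minor, hence the rank, persists. The alternative compactness argument you sketch (normalised coefficient vectors on $S^{k-1}$, extraction of a convergent subsequence, contradiction at $w_0$) is also valid, and has the minor advantage of working verbatim with $\ccrm$ replaced by any normed space; either version would serve as a self-contained substitute for the citation.
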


The following two results correspond, in that order, to Proposition 3.1 and Corollary 3.4 in \cite{Evard90}.

\begin{lemma}\label{lemmaGramSchmidt}
Let $D \subset \ccrN$ be open. Suppose that $a_1, \ldots, a_k \in C^{\infty}(D; \ccrm)$ are such that $a_1(w), \ldots, a_k(w)$ are linearly independent for each $w \in D$. Then there exists a unique family of vector functions $u_1, \ldots u_k \in C^{\infty}(D; \ccrm)$ such that for each $w \in D$ the family $u_1(w), \ldots, u_k(w)$ is orthonormal, and for each $j \in \{1, \ldots, k\}$ one has
$$ \span2 \{a_1(w), \ldots, a_j(w)\} =  \span2 \{u_1(w), \ldots, u_j(w)\} \ldotp$$
\end{lemma}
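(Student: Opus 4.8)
The statement to prove is Lemma~\ref{lemmaGramSchmidt}, a smooth Gram--Schmidt orthonormalization result: given smoothly varying linearly independent vectors $a_1(w), \ldots, a_k(w)$, one produces a smooth orthonormal family $u_1(w), \ldots, u_k(w)$ spanning the same nested subspaces.

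\begin{proof}[Proof proposal]
The plan is to carry out the classical Gram--Schmidt procedure explicitly and verify by induction on $k$ that each step preserves smoothness. First I would set up the induction: for the base case $k=1$, define $u_1(w) := a_1(w)/\|a_1(w)\|$. Since $a_1(w) \neq 0$ for every $w \in D$ (linear independence of a single vector just means non-vanishing), the norm $\|a_1(w)\| = \langle a_1(w), a_1(w)\rangle^{1/2}$ is a smooth, strictly positive function of $w$ (the inner product is a polynomial in the smooth components of $a_1$, hence smooth, and the square root is smooth away from zero), so $u_1 \in C^{\infty}(D; \ccrm)$ and clearly $\span2\{a_1(w)\} = \span2\{u_1(w)\}$.

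For the inductive step, assume $u_1, \ldots, u_{j-1} \in C^{\infty}(D;\ccrm)$ have been constructed with the desired orthonormality and span properties. I would define the auxiliary vector
$$ \widetilde{u}_j(w) := a_j(w) - \sum_{i=1}^{j-1} \langle a_j(w), u_i(w) \rangle\, u_i(w), $$
which is manifestly smooth in $w$ since it is built from products and sums of smooth functions (inner products of smooth vector fields are smooth). The key point is that $\widetilde{u}_j(w) \neq 0$ for every $w \in D$: if it vanished, then $a_j(w)$ would lie in $\span2\{u_1(w), \ldots, u_{j-1}(w)\} = \span2\{a_1(w), \ldots, a_{j-1}(w)\}$, contradicting the linear independence of $a_1(w), \ldots, a_j(w)$. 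Consequently $\|\widetilde{u}_j(w)\|$ is smooth and strictly positive on all of $D$, so I may set $u_j(w) := \widetilde{u}_j(w)/\|\widetilde{u}_j(w)\|$, which is smooth. Orthonormality of $u_1(w), \ldots, u_j(w)$ and the span identity $\span2\{a_1(w), \ldots, a_j(w)\} = \span2\{u_1(w), \ldots, u_j(w)\}$ follow from the standard (pointwise) Gram--Schmidt algebra, closing the induction.

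The main subtlety to watch is smoothness of the normalization at each stage, which reduces to showing the relevant denominators never vanish; this is exactly where the global linear independence hypothesis is used, guaranteeing $\widetilde{u}_j(w) \neq 0$ throughout $D$ and hence that division by $\|\widetilde{u}_j(w)\|$ preserves the $C^{\infty}$ class everywhere, not just locally. For uniqueness, I would argue that the requirements pin down each $u_j(w)$ up to sign at every point: given the span condition, $u_j(w)$ must be a unit vector in $\span2\{u_1(w),\ldots,u_j(w)\}$ orthogonal to $u_1(w), \ldots, u_{j-1}(w)$, which determines it up to a factor $\pm 1$; fixing the sign by the convention that the coefficient $\langle u_j(w), a_j(w)\rangle$ be positive (equivalently, matching the orientation coming from $a_j$) selects the unique choice, and smoothness forbids the sign from jumping. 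I expect the only genuinely technical point to be this uniqueness-up-to-continuity argument; the existence and smoothness are routine once the non-vanishing of $\widetilde{u}_j$ is established.
\end{proof}
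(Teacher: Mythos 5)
Your proof is correct, and there is no in-paper argument to compare it against: the paper states this lemma as a quoted result (Proposition 3.1 of \cite{Evard90}), and the proof in that reference is exactly the smooth Gram--Schmidt induction you carry out, with smoothness at each stage reduced, as you do, to non-vanishing of the normalising denominators via the global linear independence hypothesis. The only point deserving comment is uniqueness. As transcribed in the paper, the lemma omits the sign-normalisation $\langle a_j(w), u_j(w)\rangle > 0$ that is part of Evard's original formulation, and without some such condition uniqueness is literally false: replacing $u_1$ by $-u_1$ on all of $D$ yields another smooth family satisfying orthonormality and all the span identities. You correctly observe that orthonormality plus the nested span conditions determine each $u_j(w)$ only up to a factor $\pm 1$, and that the positivity convention $\langle u_j(w), a_j(w)\rangle > 0$ selects the Gram--Schmidt choice; but note that once this pointwise convention is imposed, uniqueness is a pointwise matter, so the additional appeal to ``smoothness forbids the sign from jumping'' is redundant --- and it could not substitute for the convention either, since a global sign flip is smooth and never jumps. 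In effect your argument proves the corrected statement in which the positivity condition is added to the list of requirements, which is the form in which the cited result should be read.
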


\begin{lemma}\label{lemmaLocalRankDecomposition}
Let $D \subset \ccrN$ be open and let $r \in \naturals$. Suppose that $A$ is a matrix-valued function of class $C^{\infty}$ and constant rank $r$, i.e. $A \in C^{\infty}(D; \ccrmn_r)$. Fix any $w_0 \in D$. Then there exist an open neighbourhood $D_0 \subset D$ of $w_0$ and functions $U \in  C^{\infty}(D_0; \ccrmr_r)$ and $B \in  C^{\infty}(D_0; \ccrrn_r)$  such that $U^T (w) U(w) \equiv \identity_r$ and
$$A(w) = U(w)B(w) \quad \text{ for all } w \in D_0 \ldotp$$
\end{lemma}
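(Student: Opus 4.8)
The plan is to build $U$ column-by-column via a smooth Gram--Schmidt procedure applied to a well-chosen set of $r$ columns of $A$, and then to recover $B$ by orthogonal projection. The constant-rank hypothesis enters precisely to guarantee that the span of the selected columns fills out the entire image of $A(w)$ throughout a neighbourhood, rather than merely at the base point $w_0$.

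First I would select $r$ columns of $A(w_0)$ that are linearly independent; this is possible since $\rank A(w_0) = r$ means the column space is $r$-dimensional. Writing $a^{(j)}(w)$ for the $j$-th column of $A(w)$, the chosen columns $a^{(j_1)}, \ldots, a^{(j_r)}$ are smooth $\ccrm$-valued functions on $D$. By Lemma \ref{lemmaLinearIndependence} there is an open neighbourhood $D_0 \subset D$ of $w_0$ on which $a^{(j_1)}(w), \ldots, a^{(j_r)}(w)$ remain linearly independent for every $w$. On $D_0$ I then apply the smooth Gram--Schmidt procedure of Lemma \ref{lemmaGramSchmidt} to obtain orthonormal functions $u_1, \ldots, u_r \in C^{\infty}(D_0; \ccrm)$ satisfying $\span2\{u_1(w), \ldots, u_r(w)\} = \span2\{a^{(j_1)}(w), \ldots, a^{(j_r)}(w)\}$ for each $w \in D_0$.

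The key observation is that this span equals $\im A(w)$ on all of $D_0$. Indeed, the selected columns always lie in $\im A(w)$, so their ($r$-dimensional) span is a subspace of it; but the constant-rank hypothesis forces $\dim \im A(w) = r$, and an $r$-dimensional subspace contained in an $r$-dimensional space must be the whole space. This is exactly where constant rank is used. I would then set $U(w) := [\,u_1(w) \mid \cdots \mid u_r(w)\,] \in \ccrmr$, so that $U^T(w)U(w) = \identity_r$ by orthonormality and $U(w)U^T(w)$ is the orthogonal projection onto $\im A(w)$. Since every column of $A(w)$ lies in $\im A(w)$, this projection fixes $A(w)$, giving $U(w)U^T(w)A(w) = A(w)$. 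Defining $B(w) := U^T(w)A(w)$ yields the factorisation $A = UB$, with $B \in C^{\infty}(D_0; \ccrrn)$ smooth as a product of smooth functions. Finally $r = \rank A = \rank(UB) \leqslant \rank B \leqslant r$ (the last bound since $B$ is $r \times n$) shows $\rank B = r$, so $B$ takes values in $\ccrrn_r$.

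The main obstacle is conceptual rather than computational: one must ensure that the orthonormal frame produced by Gram--Schmidt spans the correct subspace --- the full image of $A(w)$ --- at \emph{every} point of the neighbourhood, not just at $w_0$. This is secured by combining the persistence of linear independence from Lemma \ref{lemmaLinearIndependence} with the constant-rank hypothesis, which pins $\dim \im A(w)$ to $r$ throughout $D_0$. Everything else --- smoothness of $U$ and $B$, the identity $UU^T A = A$, and the rank count for $B$ --- is routine once this subspace-matching is established.
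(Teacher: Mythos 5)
Your proof is correct and takes exactly the intended route: the paper itself gives no proof of this lemma, deferring to Corollary 3.4 of \cite{Evard90}, and the argument there is precisely your combination of selecting $r$ columns independent at $w_0$, propagating independence to a neighbourhood via Lemma \ref{lemmaLinearIndependence}, applying the smooth Gram--Schmidt of Lemma \ref{lemmaGramSchmidt}, and using constant rank to identify the resulting span with $\im A(w)$ throughout $D_0$. Your construction $U(w) = [\,u_1(w) \mid \cdots \mid u_r(w)\,]$, $B = U^T A$, the identity $A = UU^T A = UB$, and the rank count $r = \rank(UB) \leqslant \rank B \leqslant r$ are all sound, so nothing is missing.
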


This full-rank decomposition in Lemma \ref{lemmaLocalRankDecomposition} may be used to determine the Moore-Penrose generalised inverse of a matrix, as in Theorem 5, Chapter 1 of \cite{BenIsrael03} (originally due to MacDuffee). 

\begin{theorem}\label{thmInverseRepresentation}
If $A \in \ccrmn_r$ with $r>0$ has a full-rank factorisation $A = UB$ satisfying $U \in \ccrmr_r$, $B \in \ccrrn_r$ and $U^T U = \identity_r$ then
$$ \amp = B^T(BB^T)^{-1}U^T \ldotp$$
\end{theorem}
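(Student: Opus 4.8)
The plan is to verify directly that the explicit candidate $X := B^T(BB^T)^{-1}U^T$ satisfies the four defining Penrose conditions
$$ AXA = A, \qquad XAX = X, \qquad (AX)^T = AX, \qquad (XA)^T = XA, $$
and then to invoke the uniqueness of the matrix satisfying these four equations to conclude that $\amp = X$. This is the cleanest route because the pseudoinverse is characterised by these conditions, so no appeal to limits, ranges, or variational definitions is needed once the algebra checks out.

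Before the verification I would record two preliminary facts. First, since $B \in \ccrrn_r$ has full row rank $r$, the linear map $B^T$ is injective, so $BB^T$ is a symmetric positive-definite $r \times r$ matrix and in particular invertible; this is exactly what makes $X$ well defined. Second, the hypothesis $U^T U = \identity_r$ is used repeatedly. Writing $A = UB$, these two facts collapse the two relevant products to
$$ AX = UB\,B^T(BB^T)^{-1}U^T = UU^T, \qquad XA = B^T(BB^T)^{-1}U^T U B = B^T(BB^T)^{-1}B, $$
where the second identity uses $U^T U = \identity_r$ in the middle.

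The four conditions then follow by short manipulations built on these two products. For the first, $AXA = UU^T UB = U\,\identity_r\,B = UB = A$, again using $U^T U = \identity_r$. For the second, $XAX = B^T(BB^T)^{-1}B\,B^T(BB^T)^{-1}U^T = B^T(BB^T)^{-1}U^T = X$, the factor $(BB^T)(BB^T)^{-1}$ in the middle cancelling. The two symmetry conditions are immediate from the computed products: $AX = UU^T$ is visibly symmetric, and $XA = B^T(BB^T)^{-1}B$ is symmetric because $BB^T$, and hence its inverse, is symmetric. Having verified all four equations, uniqueness of the Moore--Penrose inverse yields $\amp = B^T(BB^T)^{-1}U^T$.

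There is essentially no hard step here: the only non-formal inputs are the invertibility of $BB^T$ (guaranteed by the full-rank hypothesis $B \in \ccrrn_r$) and the standard uniqueness of the matrix solving the Penrose equations. Everything else is routine cancellation exploiting $U^T U = \identity_r$ and the symmetry of $BB^T$. It is worth emphasising that the normalisation $U^T U = \identity_r$ produced by Lemma \ref{lemmaLocalRankDecomposition} is precisely what lets $U^T$ appear directly in the final formula, rather than the bulkier $(U^T U)^{-1}U^T$ one would otherwise need.
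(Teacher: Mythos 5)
Your verification is correct in every step: $AX = UBB^T(BB^T)^{-1}U^T = UU^T$ and $XA = B^T(BB^T)^{-1}U^TUB = B^T(BB^T)^{-1}B$ are computed correctly, the four Penrose equations then follow exactly as you say, the invertibility (indeed positive-definiteness) of $BB^T$ is rightly traced to $B$ having full row rank $r$, and uniqueness of the solution of the Penrose system legitimately closes the argument. The relevant comparison, however, is that the paper offers no proof of this statement at all: it is imported verbatim as Theorem 5, Chapter 1 of \cite{BenIsrael03} (attributed to MacDuffee), so your Penrose-equations check is in effect the standard textbook proof of the cited result, making the paper self-contained at this point at the cost of a short computation. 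One small remark if your argument were actually spliced in: the paper's working description of the generalised inverse, given around \eqref{eqQAP}, is the operational one ($\bbQ(w)$ vanishing on $\im(\bbA(w))^{\perp}$ and inverting $\bbA(w)$ up to the orthogonal projection $\bbP(w)$ onto the kernel), whereas you characterise $\amp$ by the four Penrose identities; the equivalence of these two descriptions is classical, but strictly speaking it is an input of your proof and deserves a one-line acknowledgement, either by citing it or by checking directly that $X = B^T(BB^T)^{-1}U^T$ satisfies the paper's conditions (which is equally routine, since $XA = B^T(BB^T)^{-1}B$ is exactly $\identity - \bbP$ for the orthogonal projection $\bbP$ onto $\ker A$, and $X$ visibly annihilates $(\im A)^{\perp} = \ker U^T$).
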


Combining all these we may now easily prove the following:
\begin{theorem}\label{thmSmoothInverse}
Let $D \subset \ccrN$ be open. Assume that $A \colon D \rightarrow \ccrmn_r$ is a constant-rank matrix valued function of class $C^{\infty}$. Then the function $\amp \colon D \rightarrow \ccrn \times m_r$, given by taking the Moore-Penrose generalised inverse of $A(w)$ at each point $w \in D$, is of class $C^{\infty}$ as well. 
\end{theorem}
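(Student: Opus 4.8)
The plan is to localise and then read off the smoothness directly from the explicit formula in Theorem \ref{thmInverseRepresentation}. Since being of class $C^\infty$ is a local property, it suffices to prove that $\amp$ is smooth on a neighbourhood of each point $w_0 \in D$, and then conclude.

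First I would fix such a $w_0$ and apply Lemma \ref{lemmaLocalRankDecomposition}, which furnishes an open neighbourhood $D_0 \subset D$ of $w_0$ together with smooth functions $U \in C^\infty(D_0; \ccrmr_r)$ and $B \in C^\infty(D_0; \ccrrn_r)$ satisfying $U^T(w)U(w) \equiv \id_r$ and $A(w) = U(w)B(w)$ for all $w \in D_0$. For each fixed $w \in D_0$ this is exactly a full-rank factorisation of the type required by Theorem \ref{thmInverseRepresentation}, so that theorem yields the pointwise identity
$$ \amp(w) = B^T(w)\bigl(B(w)B^T(w)\bigr)^{-1} U^T(w) \quad \text{for all } w \in D_0. $$

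It then remains to check that the right-hand side defines a smooth function of $w$ on $D_0$. The maps $w \mapsto B^T(w)$ and $w \mapsto U^T(w)$ are smooth, being transposes of smooth matrix-valued functions, and consequently $w \mapsto B(w)B^T(w)$ is smooth with values in the $r \times r$ matrices. Because $B(w)$ has rank $r$ for every $w \in D_0$, its rows are linearly independent, so the Gram matrix $B(w)B^T(w)$ is symmetric positive definite, and in particular $\det\bigl(B(w)B^T(w)\bigr) \neq 0$ throughout $D_0$. By Cramer's rule the entries of $\bigl(B(w)B^T(w)\bigr)^{-1}$ are the entries of the adjugate of $B(w)B^T(w)$ — polynomials in the entries of $B(w)B^T(w)$, hence smooth in $w$ — divided by the nowhere-vanishing smooth function $\det\bigl(B(w)B^T(w)\bigr)$. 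Thus $w \mapsto \bigl(B(w)B^T(w)\bigr)^{-1}$ is smooth, and as a product of three smooth matrix-valued factors so is $\amp$ on $D_0$.

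Since $w_0 \in D$ was arbitrary, $\amp$ is of class $C^\infty$ on all of $D$. One subtlety worth noting is that the local factorisations $U, B$ produced by Lemma \ref{lemmaLocalRankDecomposition} on overlapping neighbourhoods need not coincide; this causes no difficulty, however, because the Moore-Penrose inverse is uniquely determined, so every such local formula computes the same globally defined map $\amp$, and the local smoothness assertions patch together automatically. I do not expect any genuine obstacle here: the only content beyond the cited results is the elementary fact that matrix inversion is smooth on the open set of invertible matrices, supplied by Cramer's rule, since all of the real difficulty has already been absorbed into the smooth full-rank decomposition of Lemma \ref{lemmaLocalRankDecomposition}.
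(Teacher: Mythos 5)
Your proof is correct and takes essentially the same route as the paper: localise, apply the smooth full-rank factorisation of Lemma \ref{lemmaLocalRankDecomposition}, invoke the formula of Theorem \ref{thmInverseRepresentation}, and observe that matrix inversion is smooth on the set of invertible matrices. The only detail the paper adds is to dispose separately of the trivial case $r = 0$ (where $A \equiv 0$ and hence $\amp \equiv 0$), which you should also do, since Theorem \ref{thmInverseRepresentation} is stated only for $r > 0$.
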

\begin{proof}
Clearly it is enough to show this result locally. First of all, if the rank $r = 0$ then $A \equiv 0$, in which case the result is trivial. Assuming $r > 0$ we fix an arbitrary $w_0 \in D$. Then Lemma \ref{lemmaLocalRankDecomposition} yields a local full-rank decomposition
$$ A(w) = U(w)B(w),$$
with $U \in  C^{\infty}(D_0; \ccrmr_r)$ and $B \in  C^{\infty}(D_0; \ccrrn_r)$ for some open set $D_0$ containing $w_0$. We also have $U^T (w) U(w) \equiv \identity_r$, so that Theorem \ref{thmInverseRepresentation} shows that $\amp$ on $D_0$ may be expressed as
$$ \amp (w) = B^T(w) \left(B(w)B^T(w)\right)^{-1}U(w)^T \ldotp$$
All that is left to observe is that all the factors of the above expression are of class $C^{\infty}$. In particular, since the (square) matrix $B(w)B^T(w)$ is smooth and invertible at each point $w \in D_0$, its inverse is necessarily smooth as well.
\end{proof}
Finally, it is enough to observe that smoothness of $\bbQ$ immediately yields the same for $\bbP$, and thus we are done.


\begin{thebibliography}{99}

\bibitem{AcerbiFusco84} {\sc E. Acerbi, N. Fusco}, Semicontinuity problems in the calculus of variations, {\it Archive for Rational Mechanics and Analysis}, 86.2 (1984), 125-145.

\bibitem{ArroyoPhilippisRindler17} {\sc A. Arroyo-Rabasa, G. De Philippis, F. Rindler}, Lower semicontinuity and relaxation of linear-growth integral functionals under PDE constraints, {\it arXiv preprint}, arXiv:1701.02230 (2017).

\bibitem{BallKircheimKristensen00} {\sc J. M. Ball, B. Kirchheim, J. Kristensen}, Regularity of quasiconvex envelopes, {\it Calculus of Variations and Partial Differential Equations}, 11.4 (2000), 333-359.

\bibitem{BallMurat84} {\sc J. M. Ball, F. Murat}, $\W1p$-quasiconvexity and variational problems for multiple integrals, {\it Journal of Functional Analysis}, 58.3 (1984), 225-253.

\bibitem{BenIsrael03} {\sc A. Ben-Israel, T. N. E. Greville}, Generalized Inverses: Theory and Applications, {\it CMS Books in Mathematics, Springer}, 2003.

\bibitem{BraidesFonsecaLeoni00} {\sc A. Braides, I. Fonseca, G. Leoni}, A-quasiconvexity: relaxation and homogenization, {\it ESAIM: Control, Optimisation and Calculus of Variations}, 5 (2000), 539-577.

\bibitem{Brezis10} {\sc H. Brezis}, Functional analysis, Sobolev spaces and partial differential equations, {\it Springer Science \& Business Media}, 2010.

\bibitem{CampbellMeyer91} {\sc S.L. Campbell, C.D. Meyer Jr.}, Generalized Inverses of Linear Transformations, {\it Dover Publications}, 1991.

\bibitem{Dacorogna82} {\sc B. Dacorogna}, Weak Continuity and Weak Lower Semicontinuity of Non-Linear Functionals, {\it Lecture Notes in Mathematics, Springer-Verlag}, 1982.

\bibitem{Dacorogna07} {\sc B. Dacorogna}, Direct methods in the calculus of variations, {\it Springer Science \& Business Media}, 2007.

\bibitem{DeSimone93} {\sc A. De Simone}, Energy minimizers for large ferromagnetic bodies, {\it Archive for rational mechanics and analysis}, 125.2 (1993), 99-143.

\bibitem{DiPernaMajda87} {\sc R. J. DiPerna, A. J. Majda}, Oscillations and concentrations in weak solutions of the incompressible fluid equations, {\it Communications in Mathematical Physics}, 108.4 (1987), 667-689.

\bibitem{Evard85} {\sc J-C. Evard}, On Matrix Functions which Commute with their Derivative, {\it Linear Algebra and its Applications}, 68 (1985), 145-178.

\bibitem{Evard90} {\sc J-C. Evard}, On the Existence of Bases of Class $C^p$ of the Kernel and the Image of a Matrix Function, {\it Linear Algebra and its Applications}, 135 (1990), 33-67.

\bibitem{FonsecaKruzik10} {\sc I. Fonseca, M. Kru\v{z}\'{i}k}, Oscillations and concentrations generated by A-free mappings and weak lower semicontinuity of integral functionals, {\it ESAIM: Control, Optimisation and Calculus of Variations}, 16.2 (2010), 472-502.

\bibitem{FonsecaLeoniMuller04} {\sc I. Fonseca, G. Leoni, S. M\"{u}ller}, A-quasiconvexity: weak-star convergence and the gap, {\it Annales de l'IHP, Analyse non lin\'{e}aire}, 21.2 (2004), 209-236. 

\bibitem{FonsecaMaly97} {\sc I. Fonseca, J. Mal\'{y}}, Relaxation of multiple integrals below the growth exponent, {\it Annales de l'Institut Henri Poincare (C) Non Linear Analysis, Elsevier Masson}, 14.3 (1997), 309-338.

\bibitem{FonsecaMuller99} {\sc I. Fonseca, S. M\"{u}ller}, A-quasiconvexity, lower semicontinuity, and Young measures, {\it SIAM Journal on Mathematical Analysis}, 30.6 (1999), 1355-1390.

\bibitem{Kato13} {\sc T. Kato}, Perturbation theory for linear operators, {\it Springer Science \& Business Media}, 2013.

\bibitem{KinderlehrerPedregal94} {\sc D. Kinderlehrer, P. Pedregal}, Gradient Young measures generated by sequences in Sobolev spaces, {\it Journal of Geometric Analysis}, 4.1 (1994), 59-90.

\bibitem{Kristensen98} {\sc J. Kristensen}, Lower semicontinuity of quasi-convex integrals in BV, {\it Calculus of Variations and Partial Differential Equations}, 7.3 (1998), 249-261. 

\bibitem{Kristensen15} {\sc J. Kristensen}, A necessary and sufficient condition for lower semicontinuity, {\it Nonlinear Analysis: Theory, Methods \& Applications}, 120 (2015), 43-56.

\bibitem{KuratowskiRyllNardzewski65} {\sc K. Kuratowski, C. Ryll-Nardzewski} A general theorem on selectors, {\it Bulletin de l'Acad\'{e}mie polonaise des sciences. S\'{e}rie des sciences math\'{e}matiques, astronomiques, et physiques}, 13.1 (1965), 397-403.

\bibitem{Marcellini85} {\sc P. Marcellini}, Approximation of quasiconvex functions, and lower semicontinuity of multiple integrals, {\it Manuscripta Mathematica}, 51.1 (1985), 1-28. 

\bibitem{Marcellini86} {\sc P. Marcellini}, On the definition and the lower semicontinuity of certain quasiconvex integrals, {\it Annales de l'IHP, Analyse non lin\'{e}aire}, 3.5 (1986), 391-409. 

\bibitem{Muller99} {\sc S. M\"{u}ller}, Rank-one convexity implies quasiconvexity on diagonal matrices, {\it International Mathematics Research Notices}, 20 (1999), 1087-1095.

\bibitem{Murat78} {\sc F. Murat}, Compacit\'{e} par compensation, {\it Annali della Scuola Normale Superiore di Pisa-Classe di Scienze}, 5.3 (1978), 489-507.

\bibitem{Murat81} {\sc F. Murat}, Compacit\'{e} par compensation: condition n\'{e}cessaire et suffisante de continuit\'{e} faible sous une hypothese de rang constant, {\it Annali della Scuola Normale Superiore di Pisa-Classe di Scienze}, 8.1 (1981), 69-102.

\bibitem{Pedregal97} {\sc P. Pedregal}, Parametrized measures and variational principles, {\it Birkh\"{a}user}, (1997).

\bibitem{Santos04} {\sc P. M. Santos}, A-quasiconvexity with variable coefficients, {\it Proceedings of the Royal Society of Edinburgh: Section A Mathematics}, 134.06 (2004), 1219-1237.

\bibitem{SteinWeiss71} {\sc E.M. Stein, G. Weiss}, Introduction to Fourier Analysis on Euclidean Spaces, {\it Princeton University Press}, 1971.

\bibitem{Sychev99} {\sc M. A. Sychev}, A new approach to Young measure theory, relaxation and convergence in energy, {\it Annales de l'Institut Henri Poincare (C) Non-Linear Analysis, Elsevier Masson}, 16.06 (1999), 773-812.

\bibitem{Tartar79} {\sc L. Tartar}, Compensated compactness and applications to partial differential equations, {\it Nonlinear analysis and mechanics, Heriot-Watt symposium, Pitman}, 4 (1979), 136-211.





\end{thebibliography}
\end{document}